\documentclass{amsart}
\usepackage{pgf,amsthm,amscd,amssymb,verbatim,epsf,amsmath,eurosym,amsfonts,mathrsfs,graphicx,tikz-cd,lscape}
\usepackage{amsthm,amscd,amssymb,verbatim,epsf,amsmath,amsfonts,mathrsfs,graphicx,dirtytalk,pdfpages,mdframed,tikz-cd,xy}
\usepackage[colorlinks=true,linkcolor=blue,citecolor=blue]{hyperref}
\usepackage[font=small,labelfont=bf]{caption} 
\usepackage{epigraph,wrapfig}
\begin{document}
\theoremstyle{plain}
\newtheorem{Thm}{Theorem}
\newtheorem{Cor}[Thm]{Cor}
\newtheorem{Ex}[Thm]{Example}
\newtheorem{Con}[Thm]{Conjecture}
\newtheorem{Main}{Main Theorem}
\newtheorem{Lem}[Thm]{Lemma}
\newtheorem{Prop}[Thm]{Proposition}

\theoremstyle{definition}
\newtheorem{Def}[Thm]{Definition}
\newtheorem{Note}[Thm]{Note}
\newtheorem{Question}[Thm]{Question}

\theoremstyle{remark}
\newtheorem{notation}[Thm]{Notation}
\renewcommand{\thenotation}{}

\errorcontextlines=0
\renewcommand{\rm}{\normalshape}%
\newcommand{\transv}{\mathrel{\text{\tpitchfork}}}
\makeatletter
\newcommand{\tpitchfork}{%
  \vbox{
    \baselineskip\z@skip
    \lineskip-.52ex
    \lineskiplimit\maxdimen
    \m@th
    \ialign{##\crcr\hidewidth\smash{$-$}\hidewidth\crcr$\pitchfork$\crcr}
  }%
}
\makeatother

\title[The Toponogov Conjecture]{Proof of the Toponogov Conjecture \\ on Complete Surfaces}
\author{Brendan Guilfoyle}\address{Brendan Guilfoyle\\
          School of STEM\\
          Munster Technological University\\
          Tralee \\
          Co. Kerry \\
          Ireland.}
\email{brendan.guilfoyle@mtu.ie}
\author{Wilhelm Klingenberg}
\address{Wilhelm Klingenberg\\
 Department of Mathematical Sciences\\
 University of Durham\\
 Durham DH1 3LE\\
 United Kingdom}
\email{wilhelm.klingenberg@durham.ac.uk }

\begin{abstract}
We prove a conjecture of Toponogov on complete convex planes, namely that such planes must contain an umbilic point, albeit at infinity. Our proof is indirect. It uses Fredholm regularity of an associated Riemann-Hilbert boundary value problem and an existence result for holomorphic discs with Lagrangian boundary conditions, both of which apply to a putative counterexample. 

Corollaries of the main theorem include a Hawking-Penrose singularity-type theorem, as well as the proof of a conjecture of Milnor's from 1965 in the convex case. 

\end{abstract}
\keywords{convex surface, holomorphic curves, mean curvature flow, umbilic points}

\maketitle

\section{Introduction and Results}\label{s:intro}

In this paper we prove the following Theorem:
\vspace{0.1in}
\begin{Thm}\label{t:1}
Let  $P \subset{{\mathbb R}^3}$ be the image of a $C^{3,\alpha}$--smooth embedding of \; ${\mathbb R}^2$ into ${\mathbb R}^3$. Then the following three properties cannot hold simultaneously:
\begin{itemize}
\item[I.] $P$ is geodesically complete with respect to the induced metric,
\item[II.] $P$ is convex: $\kappa_1 \cdot \kappa_2  \geq   0  $,
\item[III.] $P$ is uniformly non-umbilic: $\exists C>0$ such that $\inf_P |\kappa_1 - \kappa_2|> C$.
\end{itemize}
Here $\kappa_1,\kappa_2$ are the principal curvatures of $P$.\
\end{Thm}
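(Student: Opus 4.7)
The plan is a proof by contradiction: assume $P$ satisfies (I), (II), and (III) simultaneously, and derive a contradiction using the machinery indicated in the abstract. The natural arena is the space of oriented affine lines $\mathbb{L}(\mathbb{R}^3)\cong TS^2$, endowed with its canonical neutral K\"ahler structure $(J,\Omega,G)$. The oriented normal congruence of $P$ defines a Lagrangian surface $\Sigma\subset\mathbb{L}$, and the umbilic points of $P$ correspond bijectively to the complex points of $\Sigma$, namely those where $T\Sigma$ is $J$-invariant. Hypothesis (III) therefore ensures that $\Sigma$ is globally totally real with uniform control, hypothesis (II) provides a sign condition on the Gauss map of $\Sigma$ (hence on the projection $\Sigma\to S^2$), and hypothesis (I) supplies the completeness needed to anchor the analysis.

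With $\Sigma$ totally real, I would set up the Riemann--Hilbert boundary value problem for holomorphic discs $u\colon(D,\partial D)\to(\mathbb{L},\Sigma)$. The linearization is a Cauchy--Riemann operator whose Fredholm index is governed by the Maslov number of the bundle pair along $\partial D$; the uniform bounds from (II) and (III) should yield Fredholm regularity of this operator, so that any moduli space of such discs is a smooth manifold of the expected dimension. To produce an actual non-constant disc I would argue by continuation from a model surface, for example the normal congruence of a round sphere, for which the relevant moduli can be written down explicitly, and deform through an interpolating family toward $P$, invoking the a priori estimates supplied by (I)--(III) to exclude bubbling and escape to infinity. A Gromov-type compactness argument would then furnish, in the limit, a holomorphic disc in $\mathbb{L}$ with boundary on $\Sigma$.

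The final step is to transport this disc back to $\mathbb{R}^3$ and extract a contradiction. The boundary of the disc should trace a loop on $P$ along which the normal lines focus in a Lagrangian-controlled way; via the correspondence between complex points of $\Sigma$ and umbilics of $P$, this forces $\kappa_1-\kappa_2$ to vanish in some limit, contradicting the uniform lower bound in (III) and completing the proof.

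The principal obstacle, as one would expect, is the existence step, since both $\mathbb{L}$ and $\Sigma$ are non-compact. The delicate analysis is to show, along the continuation, that the energy of the discs stays bounded, that boundaries remain on $\Sigma$, and that no holomorphic sphere bubbles off inside $\mathbb{L}$. This is precisely where the three hypotheses must cooperate: completeness keeps interior points from escaping, convexity provides the positivity needed for a maximum principle on $\mathbb{L}$, and uniform non-umbilicity keeps the boundary condition $\Sigma$ uniformly totally real throughout the deformation. Controlling behavior at infinity -- effectively treating the hypothetical umbilic ``at infinity'' of $P$ as the object that sources the disc -- is where I would expect the hardest analytic work to lie.
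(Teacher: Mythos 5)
Your setup (normal congruence as a Lagrangian surface in the neutral K\"ahler space $\mathbb{L}\cong T\mathbb{S}^2$, umbilics $=$ complex points, hypothesis (III) giving uniform total reality, Fredholm theory for the Riemann--Hilbert problem) matches the paper, and your existence mechanism (continuation from the round sphere plus Gromov compactness) is a plausible substitute for the paper's mean-curvature-flow construction, though you leave all of its hard estimates as acknowledged open work. The genuine gap is in your final step. You claim that the existence of a holomorphic disc with boundary on $\Sigma$ can be ``transported back to $\mathbb{R}^3$'' so that the normal lines focus and force $\kappa_1-\kappa_2\to 0$. No mechanism is given for this, and none exists in this form: a holomorphic disc with boundary on a totally real Lagrangian surface does not by itself force complex points on that surface, so the mere existence of one disc produces no contradiction with (III).

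The paper's contradiction is instead a dimension count combined with genericity. Because $\mathcal{P}$ is simply connected and (by III) totally real, the Keller--Maslov index of any boundary loop vanishes, so the analytic index of the linearized Cauchy--Riemann operator is $\mu+2=2$; quotienting by the three-dimensional M\"obius group, the expected dimension of unparametrized discs is $-1$. By the Fredholm regularity result (Sard--Smale applied over the Banach manifold of nearby Lagrangian boundary conditions), one can perturb $\mathcal{P}$ to a boundary condition for which the operator is surjective, and negative expected dimension then means \emph{no} holomorphic discs exist with boundary on the perturbed surface. The contradiction arises because the existence theorem --- which rests on Toponogov's structure result that (I) and (II) force the Gauss image to be a hemisphere, so the boundary condition is a totally real section over a hemisphere --- applies equally to the perturbed boundary and still produces a disc. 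Without the Maslov-index computation, the $-1$-dimensional count, and the perturbation argument that makes it bite, your proposal has no route from ``a disc exists'' to a contradiction with (I)--(III).
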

\vspace{0.1in}

The method of proof uses the existence of holomorphic discs to contradict Fredholm regularity of an associated Riemann-Hilbert problem with negative index, thereby proving the non-existence of boundary planes satisfying I, II and III for which the problem {\em would} be regular. This method closely follows the authors' approach to the Carath\'eodory Conjecture \cite{G20} \cite{Gk02} \cite{Gk01} \cite{GK23}.

The first step takes any sufficiently smooth plane $P$ in ${\mathbb R}^3$ to the collection of its oriented normal lines, considered as a plane ${\mathcal P}$ in the space ${\mathbb L}$ of all oriented lines. Elementary properties of this correspondence imply that ${\mathcal P}$ is Lagrangian with respect to the canonical symplectic structure $\Omega$ on ${\mathbb L}$. When $P$ is strictly convex,  ${\mathcal P}$ is the graph of a section of the bundle ${\mathbb L}=TS^2\rightarrow {\mathbb S}^2$ taking an oriented line to its direction. Moreover, a point on $P$ is umbilic iff the corresponding oriented line considered as a point in ${\mathcal P}$ is a complex point with respect to the canonical complex structure ${\mathbb J}$ on ${\mathbb L}$ \cite{gak4}.

Thus take the non-umbilic convex plane $P$ and construct the totally real, Lagrangian plane ${\mathcal P}$ in the neutral K\"ahler 4-manifold $({\mathbb L},{\mathbb J},\Omega,{\mathbb G})$. Properties I and II imply that this Lagrangian plane is a section over an open hemisphere, possibly with some boundary points. This follows directly from Lemma's 1 to 4 of \cite{Top} and is discussed further in the next section.

The second step considers ${\mathcal P}$ as the boundary condition in the boundary value problem for the Cauchy-Riemann equation. The following will be called the Riemann-Hilbert problem (RHP) with boundary condition  ${\mathcal P}$: the existence or otherwise of properly embedded discs $f:(D,\partial D)\rightarrow ({\mathbb L},{\mathcal P})$ which are holomorphic with respect to ${\mathbb J}$:
\[
\bar{\partial}f(z):=df(z) \circ j - {\mathbb J}(f) \circ df(z) =  0  \qquad\forall   z \in D 
\]
\[
f(z) \in {\mathcal P}   \qquad \forall  z \in \partial D,
\]
where $j$ is the standard complex structure on the unit disc $D \subset {\mathbb C}$.

In Section \ref{s:rhp} the following is proven:

\vspace{0.1in}
\begin{Thm}\label{t:2}
Assume that $P$ satisfies Properties I and III and that $f$ solves the Riemann-Hilbert problem for boundary ${\mathcal P}$.  Then for a dense open set ${\mathcal W}$ of Lagrangian perturbations of ${\mathcal P}$, the associated Riemann-Hilbert problem with boundary condition $\tilde{\mathcal P}\in{\mathcal W}$ is Fredholm regular. As a result, the analytic index at  $\tilde{\mathcal P}\in{\mathcal W}$  is 
\[
I( \tilde{\mathcal P})={\mbox{ dim ker }}\bar{\partial}=\mu({\mathbb L},T\tilde{\mathcal P})+2,
\]
where $\mu$ is the Keller-Maslov index of the loop $f(\partial D) \hookrightarrow \tilde{\mathcal P} \hookrightarrow ({\mathbb L},\Omega) $ .
\end{Thm}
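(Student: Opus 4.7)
The strategy is the standard Floer / Oh / McDuff--Salamon Fredholm package for $\mathbb{J}$--holomorphic discs with totally real boundary, adapted to the K\"ahler four-manifold $({\mathbb L}, {\mathbb J})$. Fix $k\geq 1$ and $p>2$ and form the Banach manifold $\mathcal{B}^{k,p}_{\tilde{\mathcal P}}$ of $W^{k,p}$ maps $f:(D,\partial D)\to({\mathbb L},\tilde{\mathcal P})$, together with the Banach bundle $\mathcal{E}\to\mathcal{B}^{k,p}_{\tilde{\mathcal P}}$ whose fibre over $f$ is the space of $L^p$ sections of $\Omega^{0,1}(D)\otimes f^{*}T{\mathbb L}$. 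Solutions of the RHP are zeroes of the smooth section $\bar\partial$, and the vertical differential at such a zero $f$ is the real-linear Cauchy--Riemann operator
\[
D_f:W^{k,p}\bigl(D,\partial D;\,f^{*}T{\mathbb L},f^{*}T\tilde{\mathcal P}\bigr)\longrightarrow L^{p}\bigl(D,\Omega^{0,1}\otimes f^{*}T{\mathbb L}\bigr).
\]
Since $T\tilde{\mathcal P}$ is a totally real subbundle along $f(\partial D)$, classical elliptic boundary value theory makes $D_f$ Fredholm, and the Riemann--Roch formula for a disc with totally real boundary condition in a complex manifold of complex dimension $n$ gives $\mathrm{ind}\,D_f = n + \mu({\mathbb L},T\tilde{\mathcal P})$. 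With $n=\dim_{\mathbb C}{\mathbb L}=2$, this produces precisely the number $\mu+2$ claimed.

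The remaining content of the theorem is Fredholm regularity, i.e.\ vanishing of $\mathrm{coker}\,D_f$ for generic $\tilde{\mathcal P}$. I would let $\mathcal{L}$ denote a Banach manifold of Lagrangian perturbations of ${\mathcal P}$ compatible with Properties I and III (say, time-one Hamiltonian isotopies generated by a $C^{k}$-dense Banach subspace of compactly supported Hamiltonians on $({\mathbb L},\Omega)$, cut off so as to preserve the section-over-hemisphere description of $\tilde{\mathcal P}$). Over the total space one introduces the universal section $\bar\partial^{\mathrm{univ}}$, whose linearisation at a zero $(f,\tilde{\mathcal P})$ has the form
\[
\mathbf{D}_{(f,\tilde{\mathcal P})}(\xi,Y)=D_{f}\xi+\Pi_{\tilde{\mathcal P}}\bigl(Y\circ f|_{\partial D}\bigr),
\]
where $\Pi_{\tilde{\mathcal P}}$ is projection onto the normal of $\tilde{\mathcal P}$. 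The core analytic step is to show that $\mathbf{D}_{(f,\tilde{\mathcal P})}$ is surjective: a dual element $\eta\in L^{q}$ (with $1/p+1/q=1$) in the annihilator of the image satisfies the formal adjoint equation $D_{f}^{*}\eta=0$ in the interior, and is forced by the free choice of boundary variations $Y$ to vanish on an open arc of $\partial D$; weak unique continuation for the adjoint Cauchy--Riemann system then makes $\eta$ vanish identically. Once $\mathbf{D}$ is surjective, the universal zero set is a Banach manifold and the Sard--Smale theorem applied to its projection onto $\mathcal{L}$ delivers the dense open set $\mathcal{W}$ of regular values. For $\tilde{\mathcal P}\in\mathcal{W}$, the operator $D_{f}$ is surjective, hence
\[
I(\tilde{\mathcal P})=\dim\ker D_{f}=\mathrm{ind}\,D_{f}=\mu({\mathbb L},T\tilde{\mathcal P})+2.
\]

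The main obstacle will be the surjectivity of the universal operator $\mathbf{D}$. This argument typically requires \emph{boundary} somewhere-injectivity of $f$ — so that localised Lagrangian perturbations near a non-multiply-covered boundary point can detect cokernel elements — together with enough freedom in the class of admissible perturbations to stay within the Toponogov regime imposed by Properties I and III. Establishing somewhere-injectivity for a disc whose boundary lies in a non-compact Lagrangian section over an open hemisphere, and arranging the cut-off Hamiltonians so as not to destroy the properness or sectional character of $\tilde{\mathcal P}$, is exactly the point where the geometric hypotheses on $P$ feed back into the abstract Fredholm theory; everything else is a direct application of the standard package.
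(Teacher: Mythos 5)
Your proposal follows essentially the same route as the paper: both set up the Banach manifold of Lagrangian boundary perturbations (the paper parameterizes them as ${\mathbb J}({\mbox{grad}}_{\mathbb G}\phi)$ for $\phi\in C^{2+\alpha}$, you use cut-off Hamiltonian isotopies), invoke the totally real boundary condition to get a Fredholm Cauchy--Riemann operator with index $\mu+2$, and apply the universal moduli space plus Sard--Smale argument of Oh's boundary-perturbation Fredholm theory --- which is exactly what the paper cites for its Propositions \ref{p:oh} and \ref{p:fredholm}. The surjectivity/somewhere-injectivity issue you flag is handled in the paper only by the ``not multiply covered'' caveat and the reference to Oh and \cite{gak09}, so your reconstruction is, if anything, more explicit than the paper's own sketch.
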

\vspace{0.1in}

While the complex and symplectic structures are compatible, the associated inner product ${\mathbb G}$ has neutral signature $(2,2)$ and therefore surfaces may be both Lagrangian and complex. In general this poses difficulties for the standard Riemann-Hilbert problem.

However, the complex points on ${\mathcal P}$ correspond to umbilic points on $P$, where $\kappa_1=\kappa_2$, and so Property III implies that ${\mathcal P}$ is uniformly totally real. Theorem \ref{t:2} then follows from standard arguments as outlined in Section \ref{s:rhp} - see also \cite{Gk01}.

The next step is to prove the existence of solutions of the Riemann-Hilbert problem under exactly these circumstances:

\vspace{0.1in}

\begin{Thm}\label{t:mcf}
Let $P$ be a $C^{3+\alpha}$-smooth plane in ${\mathbb R}^3$ satisfying Properties I, II and III. Let ${\mathcal P}\subset {\mathbb L}$ be the oriented normals of $P$. 

Then $\exists f:D\rightarrow {\mathbb L}$ with $f\in C^{1+\alpha}_{loc}(D)\cap C^0(\overline{D})$ satisfying
\begin{enumerate}
\item[(i)] $f$ is holomorphic,
\item[(ii)] $f(\partial D)\subset{\mathcal P}$.
\end{enumerate}
\end{Thm} 

\vspace{0.1in}

The construction of the holomorphic disc is carried out by mean curvature flow. In particular, we consider the following initial boundary value problem:

\vspace{0.1in}

\begin{center}\fbox{\parbox{4.8in}{
\begin{center}{\Large{ \bf I.B.V.P.}}\end{center}
{\it
Consider a family of positive sections $f_s:D\rightarrow TS^2$ such that
\[
\frac{d f}{ds}=H,
\]
with initial and boundary conditions:
\vspace{0.1in}
\begin{enumerate}
\item[(i)] $f_0(D)=D_0,$
\item[(ii)]$f_s(\partial D)\subset \tilde{\mathcal P}$,
\item[(iii)] the hyperbolic angle $B$ between $Tf_s(D)$ and $T\tilde{\mathcal P}$ is constant along $f_s(\partial D)$,
\item[(iv)] $f_s(\partial D)$ is asymptotically holomorphic: $|\bar{\partial}f_s|=C/(1+s)$,
\end{enumerate} 
\vspace{0.1in}
where $H$ is the mean curvature vector of $f_s(D)$, and $D_0$ and $\tilde{\mathcal P}$ are given positive sections.
\vspace{0.1in}
}
}
}
\end{center}
\vspace{0.1in}

Here positive means spacelike: the induced metric is positive definite. In what follows we refer to $f_s(\partial D)$ as the {\it edge} of the flowing surface, which lies on the boundary plane $\tilde{\mathcal P}$. The boundary plane  $\tilde{\mathcal P}$ is obtained from the Lagrangian plane ${\mathcal P}$ of normals to $P$ by adding a holomorphic twist (see Section \ref{s:bdryinit} for full details). 

The idea of using mean curvature flow to construct holomorphic discs has been attempted before - see for example \cite{chenli}. Such flows have been considered in definite K\"ahler manifolds where mean curvature flow almost inevitably develops singularities. In contrast, a recent and growing body of literature has demonstrated that mean curvature flow of spacelike submanifolds in {\em indefinite} manifolds, even with higher codimension, can be significantly better behaved \cite{EaH} \cite{huang} \cite{LaS} \cite{smock}. 

In our case, we use the general interior gradient estimate for higher codimensional mean curvature flow in indefinite spaces established in \cite{Gk02}. The {\bf I.B.V.P.} is a quasilinear parabolic system and short time existence for the flow is established in Theorem \ref{t:ste}. The proof consists of checking the Lopatinski-Shapiro conditions for the boundary conditions and then using standard Schauder theory. 

Long time existence under certain conditions is then proven in Theorem \ref{t:lte}. In particular, having added a sufficiently large holomorphic twist to the boundary plane to make it positive at the origin, we can choose an initial surface and hyperbolic angle so that the flow exists for all time.

Long-time existence is ensured by uniform positivity of the flowing disc. Uniform positivity in the interior of the flowing disc is established by showing that the conditions required in the compact case, Theorem 1 of \cite{Gk02}, namely the timelike convergence condition and containment in a compact region, hold for this flow. We then establish uniform positivity at the edge by careful consideration of the boundary conditions and finding a priori bounds on the 2-jet of the flowing surface. Convergence to a holomorphic disc is established in Section \ref{ss:hol_disc}.

As in the standard setting, a holomorphic disc is maximal with respect to the neutral metric. The fact that a holomorphic disc, which satisfies a first order equation, can be found by a second order flow, which would usually be expected to converge to a maximal disc, is rather special. In particular, the asymptotic holomorphic boundary condition is a critical element. A simpler version of this phenomenon can be seen for the stationary rotationally symmetric case in the following Corollary of Theorem 3 in \cite{gak8}:

\vspace{0.1in}
\begin{Cor}
    A definite maximal rotationally symmetric surface in $TS^2$ with holomorphic boundary is holomorphic.
\end{Cor}
\vspace{0.1in}

These mean curvature flow results are established in the following sections. The neutral geometry of $TS^2$ is summarized in Section \ref{s:nkg}, while mean curvature flow with boundary in $TS^2$ is considered in detail in Section \ref{s:mcf}. Short-time and long-time existence for the flow, along with the  existence of holomorphic discs with Lagrangian boundary conditions, are established in the Section \ref{s:ibvp}. 

\vspace{0.1in}

Theorems \ref{t:2} and \ref{t:mcf} imply Theorem \ref{t:1} as follows. Since the totally real boundary ${\mathcal P}$ is simply connected, the Keller-Maslov index is zero. Thus after subtracting off the dimension of the M\"obius group, the dimension of the space of holomorphic discs is $-1$. Thanks to Theorem \ref{t:2}, by a perturbation of the boundary ${\mathcal P}$ any existing  holomorphic disc would disappear. But this contradicts Theorem \ref{t:mcf}. We conclude that there does not exist a plane $P$ satisfying Properties I, II and III.

A remarkable aspect of Theorem \ref{t:1} is its three natural corollaries. According to the Theorem, a plane that satisfies Properties I and II cannot satisfy Property III, and we have proven a conjecture of Victor Andreevich Toponogov \cite{Top}:

\vspace{0.1in}
\begin{Cor}
Every $C^{3,\alpha}$-smooth complete convex embedding of the plane $P$, satisfies $\inf_P |\kappa_1-\kappa_2|=0$.
\end{Cor}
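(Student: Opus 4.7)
The plan is to observe that this corollary is essentially the contrapositive of Theorem \ref{t:1} with Properties I and II held fixed as hypotheses. Once a $C^{3,\alpha}$-smooth complete convex embedding is given, Properties I and II are automatic, so the failure of the conclusion would have to coincide with Property III, and Theorem \ref{t:1} then delivers the contradiction.

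Concretely, I would fix a $C^{3,\alpha}$-smooth embedding $P$ of $\mathbb{R}^2$ into $\mathbb{R}^3$ that is geodesically complete and convex, so that Properties I and II of Theorem \ref{t:1} hold. Suppose, for the sake of contradiction, that $\inf_P |\kappa_1-\kappa_2| > 0$. Set
\[
C := \tfrac{1}{2}\inf_P |\kappa_1-\kappa_2| > 0.
\]
Then at every point of $P$ we have $|\kappa_1-\kappa_2| \geq 2C > C$, so Property III is satisfied with this choice of constant. This forces Properties I, II and III to hold simultaneously on $P$, contradicting Theorem \ref{t:1}. Since $|\kappa_1-\kappa_2|\geq 0$ everywhere, the only remaining possibility is $\inf_P |\kappa_1-\kappa_2| = 0$, which is the statement of the corollary.

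The main obstacle, of course, is not here but in Theorem \ref{t:1} itself, whose proof combines the Fredholm regularity result of Theorem \ref{t:2} with the existence of holomorphic discs provided by Theorem \ref{t:3}. Once Theorem \ref{t:1} is in hand, this corollary is simply its restatement in the form originally conjectured by Toponogov, and requires no further analytic input beyond the trivial choice of threshold $C$ above.
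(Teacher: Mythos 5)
Your proposal is correct and matches the paper's own (one-line) derivation: the corollary is simply the contrapositive of Theorem \ref{t:1} with Properties I and II as hypotheses, so Property III must fail, and the explicit choice $C=\tfrac{1}{2}\inf_P|\kappa_1-\kappa_2|$ makes the negation of the conclusion yield Property III exactly as the paper intends. No further comment is needed.
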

\vspace{0.1in}

The second corollary is the following singularity theorem that follows from the fact that a plane that satisfies Properties II and III cannot satisfy Property I. 

\vspace{0.1in}
\begin{Cor}
A convex uniformly non-umbilic plane is geodesically incomplete i.e. it has an edge in ${\mathbb R}^3$.
\end{Cor}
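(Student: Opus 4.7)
The plan is essentially to read this corollary as the contrapositive of a portion of Theorem \ref{t:1}. Theorem \ref{t:1} asserts that Properties I, II, III cannot hold simultaneously for any $C^{3,\alpha}$-smooth embedding of ${\mathbb R}^2$ into ${\mathbb R}^3$; hence, assuming a convex (II) and uniformly non-umbilic (III) plane $P$, the negation of I follows immediately, i.e.\ the induced metric on $P$ is not geodesically complete. I would therefore begin the proof by setting up the contradiction: suppose $P$ is convex, uniformly non-umbilic, and geodesically complete; then $P$ simultaneously satisfies I, II, III, contrary to Theorem \ref{t:1}.

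The only step requiring a short geometric argument is the translation of ``geodesically incomplete'' into ``has an edge in ${\mathbb R}^3$.'' For this, I would invoke Hopf--Rinow together with the embedded nature of $P$. Incompleteness provides a unit-speed geodesic $\gamma:[0,T)\to P$, $T<\infty$, which cannot be extended to $t=T$ within $P$. Since $\gamma$ has unit speed in the induced metric and this metric dominates the ambient Euclidean distance, $\gamma(t)$ is Cauchy in ${\mathbb R}^3$ as $t\to T$, so it converges to some $p_\infty\in{\mathbb R}^3$. If $p_\infty$ lay in the interior of $P$, the $C^{3,\alpha}$ embedding would allow extending $\gamma$ past $T$ in a neighborhood of $p_\infty$, a contradiction. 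Hence $p_\infty$ lies in $\overline{P}\setminus P$, which is exactly the edge in the ambient space.

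The hardest step is not logical but purely conceptual: making precise what ``edge in ${\mathbb R}^3$'' means. I would take this to mean a non-empty frontier $\overline{P}\setminus P\subset{\mathbb R}^3$ obtained from the Cauchy completion of the induced metric, which by the embedding hypothesis maps continuously into ${\mathbb R}^3$. Given this definition the argument above is routine; no genuine obstacle arises, since all analytic content is packaged in Theorem \ref{t:1}.
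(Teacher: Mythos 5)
Your proposal is correct and matches the paper's own (one-line) justification: the corollary is simply the contrapositive of Theorem \ref{t:1}, with Properties II and III forcing the failure of Property I. Your additional Hopf--Rinow-style argument identifying the incompleteness with an ``edge'' in ${\mathbb R}^3$ is a reasonable elaboration of terminology the paper leaves informal, but it is not a departure from the paper's approach.
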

\vspace{0.1in}

The final corollary comes from Properties I and III excluding Property II, which establishes the convex case of a conjecture attributed to John Milnor from 1965 \cite{Mil} - see \cite{Xav} for a recent discussion.
\vspace{0.1in}
\begin{Cor}
A complete uniformly umbilic-free plane cannot be convex. 
\end{Cor}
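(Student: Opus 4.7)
The plan is to derive this corollary as an immediate specialization of Theorem \ref{t:1}, contraposed in Property II rather than Property III. Suppose for contradiction that $P$ is a $C^{3,\alpha}$-smooth embedding of $\mathbb{R}^2$ into $\mathbb{R}^3$ which is geodesically complete (Property I), uniformly umbilic-free (Property III), and nonetheless convex in the sense $\kappa_1 \kappa_2 \geq 0$ (Property II). Then $P$ simultaneously satisfies Properties I, II, and III, directly contradicting Theorem \ref{t:1}. Hence no such $P$ exists, which is precisely the claim that a complete uniformly umbilic-free plane cannot be convex.

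The one point worth unpacking is the vocabulary. I would read ``uniformly umbilic-free'' as the quantitative strengthening of the pointwise condition $\kappa_1 \neq \kappa_2$, namely the existence of a constant $C > 0$ with $|\kappa_1 - \kappa_2| > C$ throughout $P$; this matches Property III verbatim. The uniform hypothesis is essential, since dropping it would make the corollary false: the content of the Toponogov conjecture itself is precisely that a complete convex surface may be pointwise non-umbilic while having $\inf_P|\kappa_1-\kappa_2|=0$, with umbilic behavior pushed to infinity.

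I do not anticipate any technical obstacle in this last corollary, as the genuine mathematical content has already been packaged into Theorem \ref{t:1} by the combination of Fredholm regularity (Theorem \ref{t:2}) and the existence of holomorphic discs via mean curvature flow (Theorem \ref{t:3}). The value of isolating this particular logical direction of Theorem \ref{t:1} is historical rather than mathematical: it records that the convex case of Milnor's 1965 conjecture, as discussed in \cite{Mil} and \cite{Xav}, is settled by the same argument that settles Toponogov's conjecture.
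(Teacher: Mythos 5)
Your proposal is correct and matches the paper's own derivation: the corollary is exactly the contrapositive reading of Theorem \ref{t:1} in which Properties I and III are assumed and Property II is excluded, and your identification of ``uniformly umbilic-free'' with Property III ($\inf_P|\kappa_1-\kappa_2|>C>0$) is the intended reading. Nothing further is needed.
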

\vspace{0.1in}

In the next section we summarize the work of Toponogov on complete convex planes. Section \ref{s:rhp} contains the proof of Theorem \ref{t:2} that the associated Riemann-Hilbert problem is Fredholm regular. A summary of the required neutral K\"ahler geometry appears in Section \ref{s:nkg}. Sections \ref{s:mcf} and \ref{s:ibvp} contains the proof of Theorem \ref{t:mcf} on the existence of holomorphic discs, while the final section contains the proof of Theorem \ref{t:1}.

\vspace{0.1in}

\section{Complete convex planes after Toponogov}\label{s:top}

In this section we review the work of Toponogov who proved the following:
\vspace{0.1in}
\begin{Thm} \cite{Top} \label{t:Top}
Let the embedded plane $P\subset{\mathbb R}^3$ satisfy conditions I, II and III. Then $P$ is a graph over a bounded convex domain, such that the Gauss image is an open hemisphere, possibly with some points on the boundary.
\end{Thm}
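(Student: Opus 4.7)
The plan is to combine the classical structure theory for complete convex surfaces in $\mathbb{R}^3$ with the rigidity imposed by the uniform principal-curvature gap in Property III. First I would invoke the Sacksteder--van Heijenoort theorem: a complete connected $C^2$ immersed surface with $\kappa_1\kappa_2\geq 0$ is either a cylinder over a convex plane curve, or it bounds a closed convex set $K\subset\mathbb{R}^3$. The task then splits into three steps: use Property III to exclude the cylinder alternative, analyse the recession cone of $K$ to show that it is a single ray, and read off the graph representation and the Gauss image.

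Suppose first that $P$ is a cylinder over a convex plane curve $\gamma$. Along the line generators one principal curvature vanishes, so Property III forces $|\kappa_\gamma|\geq C$. The total turning of an embedded convex plane curve is bounded by $2\pi$, giving $\mathrm{length}(\gamma)\leq 2\pi/C<\infty$; but a geodesically complete convex curve diffeomorphic to $\mathbb{R}$ must have infinite length, a contradiction. Hence $P=\partial K$ for a closed convex set $K\subset\mathbb{R}^3$, necessarily unbounded since $P\cong\mathbb{R}^2$ is non-compact, so the recession cone $\mathrm{rec}(K)$ is a nontrivial closed convex cone. If $\dim\mathrm{rec}(K)=3$ then $K$ is asymptotically a proper three-dimensional cone whose boundary has principal curvatures decaying like $1/r$, forcing $|\kappa_1-\kappa_2|\to 0$; if $\dim\mathrm{rec}(K)=2$ then $K$ is asymptotically a slab or half-space, and $\partial K$ contains a flat umbilic region. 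Both contradict Property III, so $\mathrm{rec}(K)$ is a single ray, which after rotation we take to point in the $+z$ direction.

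Writing $P$ as the lower part of $\partial K$, orthogonal projection onto the $xy$-plane sends $K$ to a bounded convex domain $\Omega$, and $P$ is the graph of a convex function $f\colon\Omega\to\mathbb{R}$. Geodesic completeness forces $f\to+\infty$ along every sequence approaching $\partial\Omega$, since otherwise an arc-length geodesic would exit $P$ in finite time. The outward unit normal $\nu=(\nabla f,-1)/\sqrt{1+|\nabla f|^2}$ lies strictly in the open lower hemisphere at every interior point, and for any $v\in\mathbb{R}^2$ the convex function $(x,y)\mapsto f(x,y)-v\cdot(x,y)$ blows up at $\partial\Omega$ and hence attains an interior minimum, at which $\nabla f=v$; so $\nabla f\colon\Omega\to\mathbb{R}^2$ is surjective and the Gauss image is exactly the open lower hemisphere, possibly together with some equator directions inherited as limits along $\partial\Omega$. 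The main obstacle is the recession-cone analysis, where Property III has to be used decisively to eliminate the higher-dimensional cases; once $\mathrm{rec}(K)$ is pinned down to a ray, the graph representation and surjectivity of $\nabla f$ are routine consequences of convex analysis.
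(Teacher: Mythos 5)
Your overall strategy (Sacksteder--van Heijenoort, then recession-cone analysis, then convex analysis for the graph and Gauss image) is reasonable, and the cylinder exclusion via a lower curvature bound on the profile curve is fine; note that the paper itself does not reprove this statement but simply quotes Lemmas 1--4 of Toponogov, which establish the graph representation $r=r(z,\phi)$ over a bounded domain together with $r_z\geq 0$, $r_{zz}\leq 0$ and $r\to r_0(\phi)<\infty$. However, your argument has a genuine gap at its central step. From ``$\mathrm{rec}(K)$ is a single ray'' you conclude that the orthogonal projection of $K$ onto the $xy$-plane is a \emph{bounded} convex domain, but this implication is false: the solid paraboloid $K=\{z\geq x^2+y^2\}$ has recession cone equal to the single ray $\{(0,0,t):t\geq 0\}$, yet its projection is all of $\mathbb{R}^2$. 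Boundedness of the domain is precisely the nontrivial content of Toponogov's lemmas, and it can only come from a further, quantitative use of Property III (the paraboloid satisfies I and II, has recession cone a ray, and is a graph over an unbounded domain; it fails III only asymptotically). Your proposal never uses III again after the recession-cone step, so the conclusion ``graph over a bounded convex domain'' is not established, and with it the surjectivity argument for $\nabla f$ (which needs $\Omega$ bounded so that $f-v\cdot(x,y)$ blows up at $\partial\Omega$) also collapses.

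In addition, the two recession-cone exclusions are asserted rather than proved. In the case $\dim\mathrm{rec}(K)=3$, ``asymptotic to a cone'' holds only in the blow-down (Hausdorff) sense, and pointwise decay of $|\kappa_1-\kappa_2|$ like $1/r$ does not follow from convexity without an argument. In the case $\dim\mathrm{rec}(K)=2$ your justification is incorrect as stated: a two-dimensional recession cone not containing a line (e.g.\ for a smoothing of $\{z\geq |x|+y^2\}$, whose recession cone is the sector $\{(u,0,w):w\geq|u|\}$) does not make $K$ asymptotically a slab or half-space, and $\partial K$ need contain no flat umbilic region; if III fails there it does so for a different reason, which you would still have to supply. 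So while the skeleton is plausible, the steps where Property III must do real work -- exactly the content of Toponogov's Lemmas 1--4 -- are either missing or supported by incorrect heuristics.
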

\vspace{0.1in}
\begin{proof}
In Lemmas 1 to 4 of \cite{Top}, it is established that the plane $P$ is a graph over a bounded convex domain. 

In addition, after a rotation and translation, it is shown that $P$ can be represented in cylindrical coordinates $(r,\varphi,z)$, related to the standard Euclidean coordinates $(x,y,z)$ through
\[
x=r\cos\varphi \qquad y=r\sin\varphi \qquad z=z,
\]
so that the plane $P$ is
\[
r=r(z,\varphi)\qquad z\geq0 \qquad 0\leq\varphi\leq2\pi.
\]
Moreover, it is established that
\[
\lim_{z\rightarrow\infty}r(z,\varphi)=r_0(\varphi)<\infty,\qquad\lim_{z\rightarrow\infty}r_z(z,\varphi)=0,
\]
\[
r_z(z,\varphi)\geq0, \qquad r_{zz}(z,\varphi)\leq0\qquad{\mbox{for }}z>0.
\]
In the strictly convex case, we have that $r_z(z,\varphi)>0$ and the Gauss image is an open hemisphere. For weakly convex planes, the Gauss image may include points on the boundary of the hemisphere.
\end{proof}
\vspace{0.1in}

\begin{center}
    \includegraphics[width=0.5\textwidth]{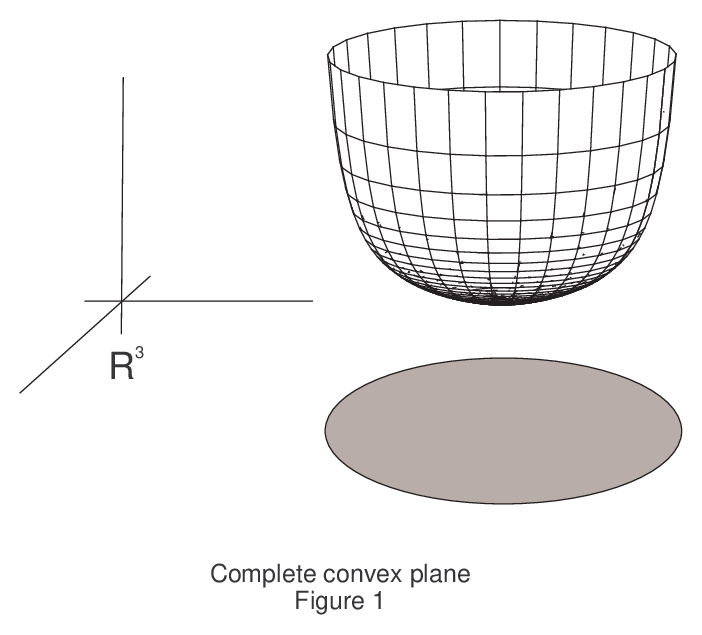}
\end{center}

In Figure 1 a complete convex plane is illustrated. It is rotationally symmetric and therefore has an umbilic point at its tip, thus satisfying properties I and II, but not property III.

\vspace{0.1in}

\section{Fredholm Regularity of the Associated Riemann-Hilbert Problem}\label{s:rhp}

Given a smoothly embedded convex plane $P\subset{\mathbb R}^3$ one constructs the associated Riemann-Hilbert boundary problem as described earlier. The oriented normal lines to such a plane $P$ form a plane in the set of all oriented lines in ${\mathbb R}^3$ and we consider the problem of finding a holomorphic disc whose boundary lies on this plane.

In more detail, let ${\mathbb L}$ be the space of oriented lines in ${\mathbb R}^3$ with canonical complex structure ${\mathbb J}$ and symplectic structure $\Omega$. These form a K\"ahler structure in which the metric ${\mathbb G}$ has neutral signature. It is easily seen that ${\mathbb L} \cong T{\mathbb S}^2$, with canonical projection $\pi:{\mathbb L}\rightarrow {\mathbb S}^2$ taking an oriented line to its unit direction vector in  ${\mathbb S}^2 \subset {\mathbb R}^3 $.

Let $P\subset{\mathbb R}^3$ be a smooth convex plane with a chosen orientation, and consider the set of oriented normal lines to $P$. These form a plane ${\mathcal P}\hookrightarrow {\mathbb L}$ with the following properties:

\vspace{0.1in}

\begin{Prop}
If the plane $P$ satisfies Property III, then ${\mathcal P}$ is a uniformly totally real Lagrangian plane. In addition, the induced metric on ${\mathcal P}$ is Lorentz.
\end{Prop}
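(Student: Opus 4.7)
The plan is to parametrize ${\mathcal P}$ explicitly via the Gauss map of $P$ and compute the pullbacks of $\Omega$, ${\mathbb J}$ and ${\mathbb G}$ in local coordinates on $P$. Writing $\mathbf{r}:U\subset{\mathbb R}^2\to{\mathbb R}^3$ for a local parametrization of $P$ with unit normal $\mathbf{n}$, the normal congruence defines a smooth map $U\to{\mathbb L}\cong T{\mathbb S}^2$. Property III ensures the existence of principal curvature coordinates $(u,v)$ in a neighbourhood of every point, in which the first fundamental form and the Weingarten map diagonalise simultaneously; these are the natural coordinates for the computation.

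The Lagrangian property is a classical fact about normal congruences requiring no hypotheses beyond smoothness. It is already invoked in the introduction and proven in \cite{gak4}, and reduces in the principal frame to the self-adjointness of the Weingarten map with respect to the first fundamental form; one obtains $\Omega|_{\mathcal P}=0$ directly.

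For uniform total reality, the key step is to express the four vectors $\{\partial_u{\mathcal P},\partial_v{\mathcal P},{\mathbb J}\partial_u{\mathcal P},{\mathbb J}\partial_v{\mathcal P}\}$ in a fixed local frame of $T{\mathbb L}$. A direct calculation in principal coordinates, of the kind carried out in \cite{gak4}, yields that the determinant of the resulting $4\times 4$ matrix is a non-vanishing positive multiple of $(\kappa_1-\kappa_2)^2$. Consequently $T_p{\mathcal P}\cap{\mathbb J} T_p{\mathcal P}=0$ if and only if $\kappa_1(p)\neq\kappa_2(p)$, which recovers the equivalence between complex points and umbilics stated in the introduction. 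Property III then provides a uniform positive lower bound on $|\kappa_1-\kappa_2|$, which translates into a uniform lower bound on the angle between $T{\mathcal P}$ and ${\mathbb J} T{\mathcal P}$ measured against any auxiliary Riemannian reference metric on ${\mathbb L}$, establishing the uniformly totally real property.

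Finally, to determine the signature of the induced metric I would pull back ${\mathbb G}$ to ${\mathcal P}$ in the principal frame and write down the resulting $2\times 2$ symmetric matrix explicitly. Its determinant should come out to a negative constant times $(\kappa_1-\kappa_2)^2$, hence strictly negative under Property III, which forces Lorentz signature $(1,1)$. The main technical hurdle I anticipate is the sign bookkeeping in the neutral K\"ahler formulas on $T{\mathbb S}^2$, for which several conventions coexist in the literature \cite{gak4,gak09}; once these are fixed compatibly with the orientation of $P$, the signature conclusion follows by a short algebraic manipulation. Note in particular that Property II (convexity) is not expected to enter this last step, consistent with the hypothesis of the proposition, because the degeneracy locus of the induced metric coincides with the complex locus, which is empty under Property III alone.
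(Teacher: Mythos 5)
Your proposal is correct and follows essentially the same route as the paper: the paper's proof cites Proposition 10 of \cite{gak4} for the Lagrangian property, uses the umbilic--complex point correspondence to get (uniform) total reality from Property III, and observes that the induced metric is Lorentz except where it degenerates, which happens only at complex points. You simply propose to verify these same facts by the explicit principal-coordinate computation carried out in \cite{gak4} (determinant criteria proportional to $(\kappa_1-\kappa_2)^2$), so the content is the same, only made computational rather than cited.
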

\begin{proof}
By Proposition 10 in \cite{gak4} the oriented normals to any surface form a Lagrangian surface in ${\mathbb L}$. Property III implies that ${\mathcal P}$ is uniformly totally real, as umbilic points on $P$ correspond to complex points on ${\mathcal P}$. Moreover, the signature of the induced metric is either Lorentz or degenerate, the latter occurring at complex points. Since the plane is totally real, the induced metric is Lorentz. 

\end{proof}
\vspace{0.1in}

Given any smooth proper plane $P$, the {\it associated Riemann-Hilbert Problem} seeks to find a properly embedded holomorphic disc $f:(D,\partial D)\rightarrow({\mathbb L},{\mathcal P})$. Moreover we seek to describe the set of all such holomorphic discs. 

To analyze holomorphic discs in the complex surface $({\mathbb L},{\mathbb J})$ with boundary lying on ${\mathcal P}$, set the following notation. Fix  $\alpha\in(0,1)$, $s\geq 1$ and denote by $C^{k+\alpha}$ and $H^{1+s}$ the usual H\"older and Sobolev spaces, respectively. Denote
\[
H^{1+s}({\mathbb L}):=\{f:D\rightarrow{\mathbb L}\;|\; f\in H^{1+s}\}.
\]
Define the space of {\it H\"older Lagrangian boundary conditions} by
\[
{\mathcal L}{\mbox{ag}} :=  \left\{ \;   {\mathcal P}\subset {\mathbb L} \; {\mbox{is }}\; C^{2+\alpha} \;{\mbox{-smooth proper Lagrangian plane}}\right\}.
\] 
This is a topological space defined by $C^{2+\alpha}$ proximity in the ambient space.

Assume there exists a $C^{3+\alpha}$-smooth proper plane $P_0\subset{\mathbb R}^3$ that is uniformly non-umbilic, and let ${\mathcal P}_0$ be the corresponding $C^{2+\alpha}$-smooth uniformly totally real Lagrangian plane in ${\mathbb L}$. 
\vspace{0.1in}
\begin{Prop}\label{p:banach}
There exists a neighbourhood ${\mathcal U}\subset{\mathcal L}{\mbox{ag}} $ of ${\mathcal P}_0$ which is a Banach manifold modelled on the Banach space $C^{2+\alpha}({\mathbb R}^2)$.
\end{Prop}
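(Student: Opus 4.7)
The plan is to produce an explicit Banach manifold chart at ${\mathcal P}_0$ by combining Weinstein's Lagrangian neighborhood theorem with the simple connectedness of ${\mathcal P}_0$ that follows from Theorem~\ref{t:Top}. By that theorem, ${\mathcal P}_0$ is a section of $\pi: {\mathbb L} \to {\mathbb S}^2$ over an open hemisphere, hence diffeomorphic to ${\mathbb R}^2$, and in particular simply connected.

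Weinstein's theorem provides a neighborhood $V$ of ${\mathcal P}_0$ in $({\mathbb L},\Omega)$ that is symplectomorphic to a neighborhood $W$ of the zero section in $(T^*{\mathcal P}_0, \omega_{\mathrm{can}})$, sending ${\mathcal P}_0$ onto the zero section. Under this identification every $C^{2+\alpha}$-embedded Lagrangian surface ${\mathcal P} \subset V$ that is sufficiently $C^1$-close to ${\mathcal P}_0$ is the graph of a closed 1-form $\alpha$ on ${\mathcal P}_0$; and simple connectedness yields $\alpha = dr$ for a function $r$ on ${\mathcal P}_0$, unique once one fixes a basepoint $p_0$ and imposes $r(p_0)=0$. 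This furnishes a candidate chart
\[
\Psi : {\mathcal U} \longrightarrow C^{2+\alpha}({\mathbb R}^2), \qquad {\mathcal P} \longmapsto r_{\mathcal P},
\]
defined on a suitable ${\mathcal L}\mathrm{ag}$-neighborhood ${\mathcal U}$ of ${\mathcal P}_0$.

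The remainder of the argument is to show that $\Psi$ is a homeomorphism onto an open subset of $C^{2+\alpha}({\mathbb R}^2)$. Injectivity is immediate from the uniqueness of $r$. Surjectivity onto a neighborhood of $\Psi({\mathcal P}_0)$ is produced by inverting $\Psi$: a small $r$ yields the graph of $dr$ in $W$, which is automatically Lagrangian, and pushing it forward through the Weinstein symplectomorphism gives a Lagrangian surface close to ${\mathcal P}_0$ in the ambient $C^{2+\alpha}$ topology. Bicontinuity follows because exterior differentiation and integration along paths are bounded operations between the relevant H\"older spaces. The auxiliary properties defining ${\mathcal L}\mathrm{ag}$, namely properness and embeddedness of the surface in ${\mathbb L}$, are open in the $C^{2+\alpha}$ topology, so they persist on a sufficiently small chart.

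I expect the main obstacle to be twofold. First, one must verify that every ${\mathcal P}$ in a genuine ${\mathcal L}\mathrm{ag}$-neighborhood of ${\mathcal P}_0$ is actually captured as a graph over ${\mathcal P}_0$ via the Weinstein identification; this is a transversality statement that rests on smallness in $C^1$, which is supplied by smallness in $C^{2+\alpha}$. Second, one must bookkeep H\"older regularity carefully through Poincar\'e's lemma and the Weinstein symplectomorphism, since integrating a $C^{2+\alpha}$-closed 1-form a priori produces a primitive in $C^{3+\alpha}$; reconciling this with the stated model $C^{2+\alpha}({\mathbb R}^2)$ comes down to which regularity convention is adopted for the submanifold ${\mathcal P}$ versus its generating function, and must be fixed for the chart to honestly close up as stated. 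Once this is done, the Banach manifold structure is a standard consequence of the Lagrangian neighborhood theorem applied over a simply connected base.
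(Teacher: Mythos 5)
Your route is sound in outline but genuinely different from the paper's. The paper does not invoke Weinstein's Lagrangian neighbourhood theorem at all: it uses the neutral K\"ahler structure directly, observing that since ${\mathcal P}_0$ is \emph{uniformly} totally real, ${\mathbb J}$ carries $T{\mathcal P}_0$ to a uniformly transverse plane field, and it then declares the chart (at the infinitesimal level) to be $\phi\mapsto {\mathbb J}(\mathrm{grad}_{\mathbb G}\phi)$ with $\phi\in C^{2+\alpha}({\mathcal P}_0)$, the gradient taken in the induced Lorentz metric, which is non-degenerate precisely because of total reality. In effect the paper replaces your Weinstein identification of a tubular neighbourhood with $T^*{\mathcal P}_0$ by the explicit identification $N{\mathcal P}_0\cong{\mathbb J}\,T{\mathcal P}_0\cong T^*{\mathcal P}_0$ furnished by ${\mathbb J}$ and ${\mathbb G}$, and it implicitly uses the same closed-equals-exact step on the simply connected plane that you make explicit via Poincar\'e's lemma. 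What your approach buys is generality and transparency (it works in any symplectic $4$-manifold and makes the role of $H^1({\mathcal P}_0)=0$ visible); what the paper's approach buys is exactly the point where your argument is thinnest: ${\mathcal P}_0$ is noncompact, and a bare appeal to Weinstein's theorem gives a tubular neighbourhood whose width may pinch at infinity, so a surface that is close to ${\mathcal P}_0$ in the ambient $C^{2+\alpha}$ (uniform) topology defining ${\mathcal L}\mathrm{ag}$ need not lie in it, nor be a graph over ${\mathcal P}_0$. To close this you must produce a \emph{uniform} Weinstein neighbourhood, and that is where the uniform hypotheses (Property III giving $\inf|\kappa_1-\kappa_2|>C$, hence uniform transversality of ${\mathbb J}T{\mathcal P}_0$ to $T{\mathcal P}_0$, together with the graphical structure from Toponogov's theorem) must enter; your proposal gestures at "smallness in $C^1$" but does not address uniformity at infinity. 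Your second flagged concern, the H\"older bookkeeping (a $C^{2+\alpha}$ Lagrangian graph has a $C^{3+\alpha}$ potential, versus the paper's stated model $\phi\in C^{2+\alpha}$), is a real discrepancy, but it is one the paper itself glosses over rather than a defect peculiar to your argument.
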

\begin{proof}
Since ${\mathcal P}_0$ is uniformly totally real, the complex structure ${\mathbb J}$ takes tangent vectors on ${\mathcal P}_0$ to vectors transverse to ${\mathcal P}_0$. Thus ${\mathcal L}{\mbox{ag}}$ is locally modeled on the Banach space of functions on ${\mathcal P}_0$: 
\[
T_{{\mathcal P}_0}{\mathcal L}{\mbox{ag}}=\left\{\;{\mathbb J}({\mbox{grad}}_{\mathbb G}\Phi)\;\;\left|\;\; \Phi\in C^{2+\alpha}({\mathcal P}_0)\;\right.\right\},
\] 
where ${\mathbb G}$ is the ambient K\"ahler metric and ${\mbox{grad}}_{\mathbb G}$ is the gradient with respect to the induced non-degenerate Lorentz metric (non-degenerate because ${\mathcal P}_0$ is totally real).
\end{proof}

For $s > 1, (2 - \alpha)/2 = 1/s $ and a relative class $A\in\pi_2({\mathbb L},{\mathcal P})$, the space of {\it parameterized Sobolev-regular discs with Lagrangian boundary condition} is defined by
\[
{\mathcal{F}}_A\equiv \left\{\;(f, {\mathcal P})\in H^{1+s}({\mathbb L})\times{\mathcal U}\;\;\left|\;\; [f]=A,\; f(\partial D)\subset {\mathcal P}  \;\right.\right\},
\]
where ${\mathcal U}$ is the Banach manifold neighbourhood of ${\mathcal P}_0$ as above. The space ${\mathcal{F}}_A$ is a Banach manifold and so the projection $\pi:{\mathcal{F}}_A\rightarrow {\mathcal{U}}$: $\pi(f, {\mathcal P})= {\mathcal P}$ is a Banach bundle. 

For $(f, {\mathcal P})\in{\mathcal F}_A$ define $\bar{\partial}f={\textstyle{\frac{1}{2}}}(df\circ j- {\mathbb J} \circ df)$, where
$j$ is the complex structure on $D$. Then $\bar{\partial}f\in H^s(f^*T ^{01}{\mathbb L}) \equiv H^s(f^*T{\mathbb L}) $ and we define the space of sections
\[
H^s\equiv\bigcup_{(f, {\mathcal P})\in{\mathcal F}_A}H^s(f^*T{\mathbb L}).
\]
This is a Banach vector bundle over ${\mathcal F}_A$ and the operator $\bar{\partial}$ is a section of this bundle.

\vspace{0.1in}
\begin{Def}
The {\it set of holomorphic discs with Lagrangian boundary condition} is defined by
\[
{\mathcal M}_A\equiv\left\{\;(f, {\mathcal P})\in{\mathcal F}_A\;\;\left|\;\; \bar{\partial}f=0 \;\right.\right\}.
\]
\end{Def}
\vspace{0.1in}

\vspace{0.1in}
\begin{Prop}\label{p:banach2}
There exists a neighbourhood of ${\mathcal P}_0$, denoted ${\mathcal V}\subset{\mathcal L}\mbox{ag}$, such that ${\mathcal M}_A\cap\pi^{-1}({\mathcal V})$ is a Banach submanifold of ${\mathcal F}_A\cap\pi^{-1}({\mathcal V})$.
\end{Prop}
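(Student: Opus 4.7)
The plan is to realize $\bar\partial$ as a smooth section of the Banach vector bundle $H^s\to{\mathcal F}_A$ and to apply the implicit function theorem for Banach manifolds to its zero locus. For this I need to establish, at each $(f_0,{\mathcal P}_0)\in{\mathcal M}_A\cap\pi^{-1}({\mathcal U})$, two properties of the vertical derivative $D\bar\partial_{(f_0,{\mathcal P}_0)}$: that it is Fredholm, and that it is surjective on some neighbourhood ${\mathcal V}$ of ${\mathcal P}_0$.

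First I would write this derivative out explicitly. Using the Banach manifold chart of Proposition \ref{p:banach} to identify $T_{{\mathcal P}_0}{\mathcal U}$ with $C^{2+\alpha}({\mathcal P}_0)$ via $\phi\mapsto{\mathbb J}({\mbox{grad}}_{\mathbb G}\phi)$, a tangent vector to ${\mathcal F}_A$ at $(f_0,{\mathcal P}_0)$ is a pair $(\xi,\phi)$ with $\xi\in H^{1+s}(f_0^*T{\mathbb L})$ satisfying the linearized Lagrangian boundary condition associated with $\phi$. The vertical derivative then splits as
\[
D\bar\partial_{(f_0,{\mathcal P}_0)}(\xi,\phi)=D_{f_0}\xi+L(\phi),
\]
where $D_{f_0}$ is the usual linearized Cauchy-Riemann operator on $f_0^*T{\mathbb L}$ with totally real boundary values in $T{\mathcal P}_0$ along $\partial D$, and $L$ records the infinitesimal reaction of $\bar\partial$ to a normal displacement of ${\mathcal P}_0$.

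Next I would invoke the standard linear Riemann-Hilbert theory for totally real boundary conditions: since ${\mathcal P}_0$ is uniformly totally real, the operator $D_{f_0}$ is Fredholm of index $\mu({\mathbb L},T{\mathcal P}_0)+2$ with closed range and finite-dimensional cokernel. To upgrade this to surjectivity of the enlarged operator $(\xi,\phi)\mapsto D_{f_0}\xi+L(\phi)$, I would argue that $L$ already covers a complement of the range of $D_{f_0}$: any $\eta\in H^s(f_0^*T{\mathbb L})$ orthogonal to every $L(\phi)$ with $\phi\in C^{2+\alpha}({\mathcal P}_0)$ has its boundary trace killed by the resulting linear functional, forcing $\eta|_{\partial D}\equiv 0$; the cokernel equation being elliptic, unique continuation then gives $\eta\equiv 0$.

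Having surjectivity on a neighbourhood ${\mathcal V}\subset{\mathcal U}$ of ${\mathcal P}_0$, the implicit function theorem furnishes submanifold charts exhibiting ${\mathcal M}_A\cap\pi^{-1}({\mathcal V})$ as a Banach submanifold of ${\mathcal F}_A\cap\pi^{-1}({\mathcal V})$. I expect the main technical difficulty to be this surjectivity-via-density step: the uniformly totally real hypothesis is essential both to ensure that the boundary trace map has dense range in the required complement, and to keep the elliptic regularity and unique continuation estimates uniform in a neighbourhood of ${\mathcal P}_0$; elsewhere, everything reduces to standard Sobolev theory for pseudo-holomorphic curves, as in \cite{gak10}.
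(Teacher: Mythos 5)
Your proposal is essentially the argument the paper relies on: the paper states this proposition without proof, deferring to Oh's Fredholm theory \cite{oh}, and your route --- viewing $\bar\partial$ as a section of $H^s\to{\mathcal F}_A$, showing the universal linearization $(\xi,\phi)\mapsto D_{f_0}\xi+L(\phi)$ is surjective because boundary perturbations ${\mathbb J}\,{\mbox{grad}}_{\mathbb G}\phi$ pair nontrivially with any cokernel element unless its boundary trace vanishes, then unique continuation plus the implicit function theorem --- is exactly that standard argument. The one point you should make explicit is that the pointwise vanishing of the trace from the integral pairing requires $f_0|_{\partial D}$ to be somewhere injective (guaranteed here because the paper works with properly embedded, not multiply covered, discs), and that the $\phi$-dependence should appear either as the inhomogeneous boundary condition on $\xi$ or as the interior term $L(\phi)$ in a trivialized chart, not both.
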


\vspace{0.1in}

Consider the linearization of $\bar{\partial}$ at $(f, {\mathcal P}) \in \mathcal{F}_A$ with respect to any ${\mathbb J}$-parallel connection on $H^{1+s}({\mathcal F}_A)$:
\[
\nabla_{(f, {\mathcal P})} \bar{\partial}: H^{1+s}(f^*T{\mathbb L} \otimes f^*T {\mathcal P}) \to H^s(f^*T{\mathbb L}).
\]
The following key points about this operator are standard:
\vspace{0.1in}

\begin{Prop}\cite{oh}\label{p:oh}
$\nabla_{(f,{\mathcal P})}\bar{\partial}$ is Fredholm and therefore has finite dimensional kernel and cokernel. The analytic index of this operator $I={\mbox{dim ker}}(\nabla_{(f, {\mathcal P})} \bar{\partial}) -{\mbox{dim
coker}}(\nabla_{(f, {\mathcal P})} \bar{\partial})$ is related to the Keller-Maslov index $\mu$ of the edge by
\[
I=\mu({\mathbb L},T{\mathcal P})+2.
\]
\end{Prop}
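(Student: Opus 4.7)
The plan is to reduce the statement to the standard analytic theory of linear Cauchy-Riemann operators on a disc with totally real Lagrangian boundary condition, as developed in \cite{oh}. The essential hypothesis is already in place: by Property III and the preceding Proposition, ${\mathcal P}$ is uniformly totally real in $({\mathbb L},{\mathbb J})$, so $f^*T{\mathcal P}|_{\partial D}$ is a totally real subbundle of $f^*T{\mathbb L}|_{\partial D}$, which is precisely the class of elliptic boundary data for $\bar{\partial}$.

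For the Fredholm property, I would first trivialize $f^*T{\mathbb L}$ over the contractible disc $D$ by a ${\mathbb J}$-parallel unitary frame, so that $\nabla_{(f,{\mathcal P})}\bar{\partial}$ takes the form $\bar{\partial}_{\mathrm{std}} + A(z)$ on maps $D \to {\mathbb C}^2$, with $A$ a zeroth-order multiplication operator whose H\"older regularity is inherited from the smoothness of the K\"ahler structure on ${\mathbb L}$ and of the map $f$. In this frame the boundary condition becomes a $C^{1+\alpha}$ loop in the totally real Grassmannian of ${\mathbb C}^2$. Standard theory of first-order elliptic boundary value problems (as in \cite{oh}) then gives that this operator is Fredholm as a map $H^{1+s} \to H^s$ with continuous dependence on the coefficients; the choice $(2-\alpha)/2 = 1/s$ in the definition of ${\mathcal F}_A$ is precisely what makes the Sobolev embedding compatible with this linear framework in dimension two.

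For the index, I would invoke the Riemann-Roch theorem for bordered Riemann surfaces: a Cauchy-Riemann operator on a rank-$n$ complex vector bundle over $D$ with totally real boundary subbundle $E$ has analytic index $\mu(E) + n$, where $\mu(E)$ is the Maslov index of the loop $E$ read off in any complex trivialization. With $n = \dim_{\mathbb C}{\mathbb L} = 2$ and $E = f^*T{\mathcal P}|_{\partial D}$ this yields $I = \mu({\mathbb L},T{\mathcal P}) + 2$ as claimed.

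The main point to check in the present setting is that the neutral signature of ${\mathbb G}$ plays no role: the Cauchy-Riemann operator depends only on ${\mathbb J}$, so Fredholm and index theory proceed exactly as in the positive-definite K\"ahler case, and the Lorentz signature of the induced metric on ${\mathcal P}$ is invisible at this level. The only genuine obstacle I anticipate is the bookkeeping in the Maslov index computation, but since the relative homotopy class $A \in \pi_2({\mathbb L},{\mathcal P})$ fixes the trivialization along $\partial D$ up to homotopy, $\mu$ is a well-defined integer and the formula stands without modification.
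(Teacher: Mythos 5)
Your proposal is correct and follows essentially the same route as the paper, which offers no independent argument here but simply cites \cite{oh} and the standard linear theory: trivialize $f^*T{\mathbb L}$ over the disc, reduce to $\bar{\partial}_{\mathrm{std}}+A$ with a totally real boundary loop (guaranteed uniformly totally real by Property III), and apply boundary Riemann--Roch to get index $=\mu + n\chi(D)=\mu+2$ for $n=2$. Your observation that the neutral signature of ${\mathbb G}$ is irrelevant because the operator depends only on ${\mathbb J}$ is exactly the point the paper relies on implicitly.
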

                    
\vspace{0.1in} 

\begin{Prop} \cite{oh}\label{p:fredholm}
There exists a dense open set ${\mathcal W}\subset{\mathcal V}$ containing ${\mathcal P}_0$ such that any (not multiply covered) holomorphic disc with edge in ${\mathcal P}\in{\mathcal W}$ is Fredholm-regular i.e. ${\mbox{dim coker}}(\nabla_{(f, {\mathcal P})} \bar{\partial})=0$.
\end{Prop}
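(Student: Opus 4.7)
The plan is to run the standard universal moduli space argument of Oh in our neutral-K\"ahler setting, using as input the Banach family of Lagrangian perturbations supplied by Proposition \ref{p:banach}. Concretely, I would show that the \emph{full} linearization of $\bar{\partial}$, with $\mathcal{P}$ allowed to vary, is surjective at every holomorphic pair $(f,\mathcal{P}) \in \mathcal{M}_A \cap \pi^{-1}(\mathcal{V})$ with $f$ simple, and then obtain the dense open set $\mathcal{W}$ from Sard--Smale together with upper semicontinuity of cokernel dimension.

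First I would consider the total linearization
\[
\mathcal{D}_{(f,\mathcal{P})}: T_{(f,\mathcal{P})}\mathcal{F}_A \longrightarrow H^s(f^*T{\mathbb L}),
\]
whose domain contains both disc variations $v$ with $v|_{\partial D} \in T\mathcal{P}$ and Lagrangian perturbations of the form ${\mathbb J}({\mbox{grad}}_{{\mathbb G}}\phi)$ for $\phi \in C^{2+\alpha}(\mathcal{P})$. To prove $\mathcal{D}_{(f,\mathcal{P})}$ surjective I would argue by duality: any $\eta \in H^s(f^*T{\mathbb L})^*$ annihilating its image must, by testing against compactly supported interior variations $v$, be a weak (hence smooth, by elliptic regularity) solution of the formal adjoint Cauchy--Riemann equation; and, by testing against the boundary perturbations ${\mathbb J}({\mbox{grad}}_{{\mathbb G}}\phi)$ for arbitrary $\phi$, its trace $\eta|_{\partial D}$ must vanish on the injective part of $f(\partial D)$. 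Since $f$ is not multiply covered, this injective set is dense in $\partial D$, so $\eta|_{\partial D} \equiv 0$. A Carleman-type unique continuation argument at the boundary for $\bar{\partial}$ with totally real boundary then forces $\eta \equiv 0$ on $D$, giving the claimed surjectivity.

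With $\mathcal{D}$ surjective throughout $\mathcal{M}_A$, the implicit function theorem yields the Banach submanifold structure of Proposition \ref{p:banach2}, and the projection $\pi|_{\mathcal{M}_A}: \mathcal{M}_A \cap \pi^{-1}(\mathcal{V}) \to \mathcal{V}$ is Fredholm of index $\mu({\mathbb L},T\mathcal{P})+2$ by Proposition \ref{p:oh}. Sard--Smale produces a residual set $\mathcal{W}_0 \subset \mathcal{V}$ of regular values of $\pi$; a standard computation shows that regularity of $\mathcal{P}$ as a value of $\pi$, combined with the universal surjectivity of $\mathcal{D}$, is equivalent to $\nabla_{(f,\mathcal{P})}\bar{\partial}$ being surjective at every $f$ with $(f,\mathcal{P}) \in \mathcal{M}_A$. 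Finally, upper semicontinuity of the cokernel dimension for Fredholm operators implies that $\{\mathcal{P} : {\mbox{dim coker}}(\nabla_{(f,\mathcal{P})}\bar{\partial})=0 \mbox{ for all admissible } f\}$ is open in $\mathcal{V}$, so the interior of $\mathcal{W}_0$ is still dense and furnishes the desired dense open $\mathcal{W} \subset \mathcal{V}$ containing $\mathcal{P}_0$.

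The principal technical hurdle is the boundary identification in the surjectivity step, which is where the neutral signature of ${\mathbb G}$ enters non-trivially. One must check that the family $\{{\mathbb J}({\mbox{grad}}_{{\mathbb G}}\phi) : \phi \in C^{2+\alpha}(\mathcal{P})\}$ is dense in the space of smooth boundary variations of $\mathcal{P}$; this relies on the non-degeneracy of the induced Lorentz metric on $\mathcal{P}$, which is precisely the content of Property III. Once this is in place the remaining unique continuation step is purely complex analytic, depending only on ${\mathbb J}$ and the transversality $T\mathcal{P} \oplus {\mathbb J}(T\mathcal{P}) = T{\mathbb L}|_{\partial D}$, and therefore transfers essentially verbatim from Oh's Riemannian proof.
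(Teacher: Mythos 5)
Your proposal follows essentially the same route as the paper, which simply invokes ellipticity of $\bar{\partial}$, Fredholmness of the projection $\pi$, and the Sard--Smale theorem (citing Oh); your write-up is a correct and considerably more detailed expansion of that standard universal-moduli-space argument, including the surjectivity of the total linearization via unique continuation at the injective points of a simple disc. The only point to watch is that passing from the residual set of regular values to an \emph{open} dense set requires some compactness control over the family of discs, a point the paper itself also leaves implicit.
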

\begin{proof}
The proof follows from ellipticity of $\bar{\partial}$, the fact that the projection map $\pi$ is Fredholm and the Sard-Smale theorem. 
\end{proof}
\vspace{0.1in}
This proves Theorem \ref{t:2}.

\vspace{0.1in}


\section{ Neutral K\"ahler Geometry of {$TS^2$}{}}\label{s:nkg}

In this section we give details of the neutral K\"ahler geometry of the space of oriented lines, paying particular attention to those parts that play a role later.

\subsection{The neutral K\"ahler surface and submanifolds}\label{s:nkm}

\begin{Def}
A {\it neutral K\"ahler surface} is a 4-manifold ${\mathbb M}$ endowed with a complex structure ${\mathbb J}$, a symplectic structure $\Omega$ and a metric ${\mathbb G}$ of signature $(++--)$ which are compatible in the sense that
\[
{\mathbb G}({\mathbb J}\cdot,{\mathbb J}\cdot)={\mathbb G}(\cdot,\cdot)
\qquad\qquad
{\mathbb G}({\mathbb J}\cdot,\cdot)=\Omega(\cdot,\cdot).
\]
\end{Def}
For such a structure we have the following identity:

\vspace{0.1in}

\begin{Prop}\label{p:callib}\cite{gak9}
Let $({\mathbb{M}},{\mathbb{J}},\Omega,{\mathbb{G}})$  be a neutral K\"ahler surface and let $p\in{\mathbb{M}}$ and $v_1,v_2\in T_p{\mathbb{M}}$ span a plane. Then
\[
\Omega(v_1,v_2)^2-det[v_1,v_2,{\mathbb J}v_1,{\mathbb J}v_2]={\mbox{det }}{\mathbb{G}}(v_i,v_j).
\]

\end{Prop}
\vspace{0.1in}

We turn now to the special case of $TS^2$. In order to compute geometric quantities, we introduce local coordinates. These are readily supplied by lifting the standard complex coordinate $\xi$ (obtained by stereographic projection from the south pole on $S^2$) to complex coordinates ($\xi,\eta$) on $TS^2$. In particular, identify $(\xi,\eta)\in{\mathbb {C}}^2$ with the vector
\[
\eta\frac{\partial}{\partial \xi}+\bar{\eta}\frac{\partial}{\partial \bar{\xi}}\in T_\xi S^2.
\]
These coordinates are holomorphic with respect to the complex structure ${\mathbb J}$:
\[
{\mathbb J}\left(\frac{\partial}{\partial \xi}\right)=i\frac{\partial}{\partial \xi}
\qquad\qquad
{\mathbb J}\left(\frac{\partial}{\partial \eta}\right)=i\frac{\partial}{\partial \eta},
\]
and the symplectic 2-form and neutral metric have the following local expressions:
\[
\Omega=4(1+\xi\bar{\xi})^{-2}{\mathbb{R}}\mbox{e}\left(d\eta\wedge d\bar{\xi}-\frac{2\bar{\xi}\eta}{1+\xi\bar{\xi}}d\xi\wedge d\bar{\xi}\right),
\]
\begin{equation}\label{e:metric}
{\mathbb{G}}=4(1+\xi\bar{\xi})^{-2}{\mathbb{I}}\mbox{m}\left(d\bar{\eta} d\xi+\frac{2\bar{\xi}\eta}{1+\xi\bar{\xi}}d\xi d\bar{\xi}\right).
\end{equation}

\begin{Def}
The canonical coordinates $(\xi,\bar{\xi})$ are called {\it Gauss coordinates} and $R=|\xi|$ is the {\it Gauss radius}. 
\end{Def}

This metric on $TS^2$ is of neutral signature $(++--)$ and is conformally and scalar flat, but not K\"ahler-Einstein. It sits within a larger class of natural scalar flat neutral metrics on $TN$, where $(N,g)$ is a Riemannian 2-manifold \cite{gak4}.   

We turn now to immersed surfaces in $TS^2$. Such 2-parameter families of oriented lines are often referred to as line congruences. Consider surfaces which are graphs of local sections of the bundle $\pi:TS^2\rightarrow S^2$. Such local sections are given by $\xi\mapsto (\xi,\eta=F(\xi,\bar{\xi}))$, for some function $F:U\rightarrow{\mathbb C}$, for $U\subset{\mathbb C}$.

\vspace{0.1in}
\begin{Def}\label{d:spinco}
For a section $\eta=F(\xi,\bar{\xi})$, introduce the weighted complex slopes of $F$:
\[
\sigma=-\partial \bar{F} \qquad\qquad
\rho=\vartheta+i\lambda=(1+\xi\bar{\xi})^2\partial [F(1+\xi\bar{\xi})^{-2}].
\]
Here, and throughout, $\partial$ represents differentiation with respect to $\xi$. The functions $\lambda$ and $\sigma$ are commonly referred to as the {\it twist} and {\it shear} of the underlying family $P$ of oriented lines in ${\mathbb R}^3$ \cite{PaR}.
\end{Def}

For later use we also introduce the notation
\[
\Delta=\lambda^2-|\sigma|^2 \qquad\qquad \mu=\frac{|\sigma|}{|\lambda|}.
\]
\vspace{0.1in}

Note the two identities, which follow from these definitions:
\begin{equation}\label{e:id1}
-(1+\xi\bar{\xi})^2\partial\left[\frac{\bar{\sigma}}{(1+\xi\bar{\xi})^2}\right]=\bar{\partial}\rho+\frac{2F}{(1+\xi\bar{\xi})^2},
\end{equation}
\begin{equation}\label{e:id2}
{\mathbb I}{\mbox{m}}\;\partial\left\{(1+\xi\bar{\xi})^2\partial\left[\frac{\bar{\sigma}}{(1+\xi\bar{\xi})^2}\right]\right\}
    =\partial\bar{\partial}\lambda+\frac{2\lambda}{(1+\xi\bar{\xi})^2}.
\end{equation}

The geometric significance of $\lambda$ and $\sigma$ are:

\vspace{0.1in}

\begin{Prop}\cite{gak2}
A plane ${\mathcal P}$ given by a local section $\eta=F(\xi,\bar{\xi})$ is Lagrangian iff $\lambda=0$ and is holomorphic iff $\sigma=0$.
\end{Prop}

\vspace{0.1in}

For the normal congruence ${\mathcal P}$ of a plane $P$ in ${\mathbb R}^3$ the following relationship between the quantities $\sigma$ and $\rho$ and the principal curvatures and directions of ${P}$ hold:

\begin{Prop}
Let $P$ be a convex plane in ${\mathbb R}^3$  and ${\mathcal P}\subset TS^2$ be the surface formed by the 2-parameter family of oriented normals to $P$. Then ${\mathcal P}$ is the graph of a section and $\lambda=0$.

Moreover
\[
|\sigma|={\textstyle{\frac{1}{2}}}|\kappa_1^{-1}-\kappa_2^{-1}|
\qquad
r+\rho={\textstyle{\frac{1}{2}}}(\kappa_1^{-1}+\kappa_2^{-1}),
\]
where $\kappa_1,\kappa_2$ are the principal curvatures of ${P}$ and $r$ is the support function of $P$ (see below). The argument of $\sigma$ gives the principal directions of $P$.
\end{Prop}
\vspace{0.1in}

To construct the 1-parameter family of parallel surfaces in ${\mathbb R}^3$ from a Lagrangian section consider the following.

\vspace{0.1in}

\begin{Prop}\cite{gak2}
Let ${\mathcal P}$ be a local Lagrangian section given by $\xi\mapsto (\xi,\eta=F(\xi,\bar{\xi}))$, for some function $F:{\mathbb C}\rightarrow{\mathbb C}$. Then there exists a real function $\xi\mapsto r(\xi,\bar{\xi})$, satisfying
\[
\bar{\partial}r=\frac{2F}{(1+\xi\bar{\xi})^2}.
\]
Such a function is defined up to an additive real constant $C$. In terms of Euclidean coordinates the surfaces 
\begin{equation}\label{e:mt}
x^1+ix^2=\frac{2(F-\bar{F}\xi^2)+2\xi(1+\xi\bar{\xi})r}{(1+\xi\bar{\xi})^2},
\qquad
x^3=\frac{-2(F\bar{\xi}+\bar{F}\xi)+(1-\xi^2\bar{\xi}^2)r}{(1+\xi\bar{\xi})^2},
\end{equation}
are orthogonal to the oriented lines of $P$.
\end{Prop}

\vspace{0.1in}

\begin{Def}
The function $r:P\rightarrow{\mathbb R}$ in the previous Proposition is called the {\it support function}. Given a point $p$ on a convex plane $P$, $r$ is the distance between $p$ and the point closest to the origin on the normal line through $p$.

The surfaces obtained by replacing $r$ by $r+C$ are called {\it parallel surfaces}.
\end{Def}

\vspace{0.1in}

For the induced metric we have:

\vspace{0.1in}

\begin{Prop}\label{p:ts2indmet}
The metric induced on the graph of a section by the K\"ahler metric is given in coordinates ($\xi,\bar{\xi}$) by;
\[
g=\frac{2}{(1+\xi\bar{\xi})^2}\left[\begin{matrix}
i\sigma & -\lambda\\
-\lambda & -i\bar{\sigma}\\
\end{matrix}
\right],
\]
with inverse
\[
g^{-1}=\frac{(1+\xi\bar{\xi})^2}{2(\lambda^2-\sigma\bar{\sigma})}\left[\begin{matrix}
i\bar{\sigma} & -\lambda\\
-\lambda & -i\sigma\\
\end{matrix}
\right].
\]
\end{Prop}
\begin{proof}
This follows from pulling back the neutral metric (\ref{e:metric}) along a local section $\eta=F(\xi,\bar{\xi})$.
\end{proof}

\vspace{0.1in}

\begin{Def}
A plane ${\mathcal P}\subset TS^2$ is {\it positive} if the induced metric on ${\mathcal P}$ is positive definite.
\end{Def}

\vspace{0.1in}

\begin{Prop}\label{p:indmet}
The induced metric on a Lagrangian surface is Lorentz, except at complex points, where it is degenerate. The induced metric on a holomorphic surface is positive, except at complex points, where it is degenerate. 
\end{Prop}
\begin{proof}
By the previous Proposition we see that the determinant of the induced metric is $4(1+\xi\bar{\xi})^{-4}(\lambda^2-\sigma\bar{\sigma})$, and the result follows. This is, in fact, a special case of Proposition \ref{p:callib}.
\end{proof}

\vspace{0.1in}

\begin{Def}\label{d:perpdist}
Let $\chi:TS^2\rightarrow {\mathbb R}$ be the map that takes an oriented line to the square of the perpendicular distance from the line to the origin. 
\end{Def}
\vspace{0.1in}

\begin{Prop}
The coordinate expression for $\chi$ is:
\[
\chi^2(\xi,\bar{\xi},\eta,\bar{\eta})=\frac{4\eta\bar{\eta}}{(1+\xi\bar{\xi})^2}.
\]
\end{Prop}
\begin{proof}
The point on an oriented line $(\xi,\eta)$ which lies closest to the origin has Euclidean coordinates (see equation (\ref{e:mt}))
\[
x^1_0+ix^2_0=-\frac{2(\eta-\bar{\eta}\xi^2)}{(1+\xi\bar{\xi})^2},
\qquad
x^3_0=-\frac{2(\eta\bar{\xi}+\bar{\eta}\xi)}{(1+\xi\bar{\xi})^2},
\]
and so the perpendicular distance to the origin is
\[
\chi^2= (x^1_0)^2+(x^2_0)^2+(x^3_0)^2 =\frac{4\eta\bar{\eta}}{(1+\xi\bar{\xi})^2}.
\] 
\end{proof}

\vspace{0.1in}

In order to continue, we introduce geometric tools which will prove useful later.

\vspace{0.1in}

\subsection{The second fundamental form of a positive surface}\label{s:nkgndff}

Let ${\mathcal P}\rightarrow TS^2$ be an immersed plane and assume that the induced metric on ${\mathcal P}$ is positive, so that for $\gamma\in {\mathcal P}$ we have the orthogonal splitting $T_\gamma TS^2=T_\gamma {\mathcal P}\oplus N_\gamma {\mathcal P}$. In what follows we omit the subscript $\gamma$.

\vspace{0.1in}

\begin{Prop}\label{p:frames}
If ${\mathcal P}$ is a positive plane given by the graph $\xi\rightarrow(\xi,\eta=F(\xi,\bar{\xi}))$, then the following vector fields form an orthonormal basis for $TTS^2$ along ${\mathcal P}$:
\[
E_{(1)}=2{\mathbb R}{ e}\left[\alpha_1\left(\frac{\partial}{\partial \xi}+\partial F\frac{\partial}{\partial \eta}
          +\partial \bar{F}\frac{\partial}{\partial \bar{\eta}}    \right) \right],
\]
\[
E_{(2)}=2{\mathbb R}{ e}\left[\alpha_2\left(\frac{\partial}{\partial \xi}+\partial F\frac{\partial}{\partial \eta}
          +\partial \bar{F}\frac{\partial}{\partial \bar{\eta}}    \right) \right],
\]
\[
E_{(3)}=2{\mathbb R}{ e}\left[\alpha_2\left(\frac{\partial}{\partial \xi}
       +(\bar{\partial} \bar{F}-2(F\partial u-\bar{F}\bar{\partial} u))\frac{\partial}{\partial \eta}
          -\partial \bar{F}\frac{\partial}{\partial \bar{\eta}}    \right) \right],
\]
\[
E_{(4)}=2{\mathbb R}{ e}\left[\alpha_1\left(\frac{\partial}{\partial \xi}
       +(\bar{\partial} \bar{F}-2(F\partial u-\bar{F}\bar{\partial} u))\frac{\partial}{\partial \eta}
          -\partial \bar{F}\frac{\partial}{\partial \bar{\eta}}    \right) \right],
\]
for
\[
\alpha_1=\frac{e^{-u-{\scriptstyle \frac{1}{2}}\phi i+{\scriptstyle \frac{1}{4}}\pi i}}
      {\sqrt{2}[-\lambda-|\sigma|]^{\scriptstyle \frac{1}{2}}}
\qquad\qquad
\alpha_2=\frac{e^{-u-{\scriptstyle \frac{1}{2}}\phi i-{\scriptstyle \frac{1}{4}}\pi i}}
      {\sqrt{2}[-\lambda+|\sigma|]^{\scriptstyle \frac{1}{2}}},
\]
where $\bar{\partial}F=-|\sigma|e^{-i\phi}$ and we have introduced $e^{2u}=4(1+\xi\bar{\xi})^{-2}$. Note that when $|\sigma|=0$, then $\phi$ is just a gauge freedom for the frame.

Moreover, $\{E_{(1)},E_{(2)}\}$ span $T{\mathcal P}$ and $\{E_{(3)},E_{(4)}\}$ span $N{\mathcal P}$. 
\end{Prop}

\vspace{0.1in}

Using the same notation as above:

\vspace{0.1in}

\begin{Prop}\label{p:dualbasis}
The dual basis of 1-forms is:
\[
\theta^{(1)}={\mathbb I}m\left[(\alpha_1\partial\bar{F}+\bar{\alpha}_1(\bar{\partial}\bar{F}
    -2(F\partial u-\bar{F}\bar{\partial} u)))d\xi-\bar{\alpha}_1d\eta\right]e^{2u},
\]
\[
\theta^{(2)}=\;{\mathbb I}m\left[(\alpha_2\partial\bar{F}+\bar{\alpha}_2(\bar{\partial}\bar{F}
    -2(F\partial u-\bar{F}\bar{\partial} u)))d\xi-\bar{\alpha}_2d\eta\right]e^{2u},
\]
\[
\theta^{(3)}=\;{\mathbb I}m\left[(\alpha_2\partial\bar{F}-\bar{\alpha}_2\partial F)d\xi
       +\bar{\alpha}_2d\eta\right]e^{2u},
\]
\[
\theta^{(4)}={\mathbb I}m\left[(\alpha_1\partial\bar{F}-\bar{\alpha}_1\partial F)d\xi
       +\bar{\alpha}_1d\eta\right]e^{2u}.
\]
\end{Prop}

\vspace{0.1in}

\begin{Def}
We refer to the above frame as the {\it canonical frame} associated with the positive plane ${\mathcal P}$. Any other frame that respects the tangent and normal splitting $TTS^2=T{\mathcal P}\oplus N{\mathcal P}$ is of the form
\[
E'_{(1)}=\cos\theta {E}_{(1)}-\sin\theta {E}_{(2)},
\qquad
E'_{(2)}=\sin\theta {E}_{(1)}+\cos\theta {E}_{(2)},
\]
\[
E'_{(3)}=\cos\psi {E}_{(3)}+\sin\psi {E}_{(4)},
\qquad
E'_{(4)}=-\sin\psi {E}_{(3)}+\cos\psi {E}_{(4)},
\]
for some $\theta,\psi\in S^1$.
\end{Def}
\vspace{0.1in}

Now consider the Levi-Civita connection $\overline{\nabla}$ associated with ${\mathbb G}$ and for $X,Y\in T{\mathcal P}$ we have the orthogonal splitting
\[
\overline{\nabla}_X Y= \nabla^\parallel_X Y+A(X,Y),
\]
where $A:T{\mathcal P}\times T{\mathcal P}\rightarrow N{\mathcal P}$ is the second fundamental form of the immersed plane ${\mathcal P}$.

Let $\Delta=\lambda^2-|\sigma|^2$, not to be confused with the Laplacian $\triangle$ of the last section.

\vspace{0.1in}

\begin{Prop}\label{p:2ndff}
The second fundamental form is:
\[
 A(e_{(a)},e_{(b)})=2{\mathbb R}{ e}\left[\beta_{ab}\left(\frac{\partial}{\partial \xi}
       +(\bar{\partial} \bar{F}-2(F\partial u-\bar{F}\bar{\partial} u))\frac{\partial}{\partial \eta}
          -\partial \bar{F}\frac{\partial}{\partial \bar{\eta}}    \right) \right],
\]
for $a,b=1,2$, where
\[
\beta_{11}=\left[i\lambda\partial |\sigma|-\sigma\bar{\partial}|\sigma|+i\lambda\partial\lambda-\sigma\bar{\partial}\lambda
    +|\sigma|(|\sigma|+\lambda)(\partial\phi-ie^{i\phi}\bar{\partial}\phi+2i\partial u-2e^{i\phi}\bar{\partial}u)\right]
\]
\[
\left/\left[2e^{2u+i\phi}(|\sigma|+\lambda)^2(-|\sigma|+\lambda)\right]\right.,
\]
\[
\beta_{22}=\left[-i\lambda\partial |\sigma|+\sigma\bar{\partial}|\sigma|+i\lambda\partial\lambda-\sigma\bar{\partial}\lambda
    +|\sigma|(|\sigma|-\lambda)(\partial\phi+ie^{i\phi}\bar{\partial}\phi+2i\partial u+2e^{i\phi}\bar{\partial}u)\right]
\]
\[
\left/\left[2 e^{2u+i\phi}(|\sigma|-\lambda)^2(-|\sigma|-\lambda)\right]\right.,
\]
\[
\beta_{12}=\left(-|\sigma|\partial |\sigma|+i\lambda e^{i\phi}\bar{\partial}|\sigma|+\lambda\partial\lambda-i\sigma\bar{\partial}\lambda
    \right)
\]
\[
\left/\left[2e^{2u+i\phi}(|\sigma|^2-\lambda^2)\sqrt{|\Delta|}\right]\right. .
\]
\end{Prop}
\begin{proof}
Consider the parallel and perpendicular projection operators $^\parallel P:TTS^2\rightarrow T{\mathcal P}$ and $^\perp P:TTS^2\rightarrow N{\mathcal P}$. These are given in terms of an adapted frame by
\[
^\parallel P_j^k=\delta_j^k-E_{(3)}^k\theta_j^{(3)}-E_{(4)}^k\theta_j^{(4)}
\qquad\qquad
^\perp P_j^k=\delta_j^k-E_{(1)}^k\theta_j^{(1)}-E_{(2)}^k\theta_j^{(2)}.
\]
The parallel projection operator has the following coordinate description:
\begin{align}
^\parallel P_{\bar{\eta}}^\xi=-{\textstyle{\frac{1}{2\Delta}}}\bar{\sigma}
&\qquad^\parallel P_{\bar{\xi}}^\xi=-{\textstyle{\frac{1}{2\Delta}}} (\bar{\partial}\bar{F}+\lambda i)\bar{\sigma},\nonumber\\
^\parallel P_\eta^\xi= -{\textstyle{\frac{1}{2\Delta}}} \lambda i
&\qquad^\parallel P_\xi^\xi= {\textstyle{\frac{1}{2\Delta}}} [(\partial F -2\lambda i)\lambda i -|\sigma|^2], \nonumber\\
^\parallel P_{\bar{\eta}}^\eta={\textstyle{\frac{1}{2\Delta}}} \bar{\sigma}(\partial F-\lambda i)
&\qquad^\parallel P_{\bar{\xi}}^\eta={\textstyle{\frac{1}{2\Delta}}} [-\bar{\sigma}[\partial F\bar{\partial}\bar{F}-|\sigma|^2
                                 -\lambda i(\bar{\partial}\bar{F}-\partial F)]+2\lambda^2],\nonumber\\
^\parallel P_\eta^\eta= -{\textstyle{\frac{1}{2\Delta}}}[\lambda i\partial F +|\sigma|^2]
&\qquad^\parallel P_\xi^\eta= {\textstyle{\frac{1}{2\Delta}}} \lambda i[(\partial F-2\lambda i)\partial F-|\sigma|^2]\nonumber,
\end{align}
while the perpendicular projection operator is
\[
^\perp P_\xi^\xi=\;^\parallel P_\eta^\eta
\qquad\qquad
^\perp P_{\bar{\xi}}^\xi=-\;^\parallel P_{\bar{\xi}}^\xi
\qquad\qquad
^\perp P_\eta^\xi=-\;^\parallel P_\eta^\xi
\qquad\qquad
^\perp P_{\bar{\eta}}^\xi=-\;^\parallel P_{\bar{\eta}}^\xi,
\]
\[
^\perp P_\xi^\eta=-\;^\parallel P_\xi^\eta
\qquad\qquad
^\perp P_{\bar{\xi}}^\eta=-\;^\parallel P_{\bar{\xi}}^\eta
\qquad\qquad
^\perp P_\eta^\eta=\;^\parallel P_\xi^\xi
\qquad\qquad
^\perp P_{\bar{\eta}}^\eta=-\;^\parallel P_{\bar{\eta}}^\eta.
\]
In terms of a frame in which $\{E_{(1)},E_{(2)}\}$ span the tangent space of ${\mathcal P}$, the second fundamental form is
\[
A_{(ab)}^{\;\;\;\;\;j}=\;^\perp P_k^j\;E_{(a)}^l\overline{\nabla}_l\;E_{(b)}^k.
\]
The result follows by direct computation of these quantities using Propositions \ref{p:frames} and \ref{p:dualbasis}.

\end{proof}

\vspace{0.1in}

\begin{Prop}
The mean curvature vector of the plane ${\mathcal P}$ is:
\[
H=2{\mathbb R}{ e}\left[\gamma\left(\frac{\partial}{\partial \xi}
       +(\bar{\partial} \bar{F}-2(F\partial u-\bar{F}\bar{\partial} u))\frac{\partial}{\partial \eta}
          -\partial \bar{F}\frac{\partial}{\partial \bar{\eta}}    \right) \right],
\]
where
\[
\gamma=\left[-\lambda(-i\lambda\partial |\sigma|+\sigma\bar{\partial}|\sigma|)
     -|\sigma|(i\lambda\partial\lambda-\sigma\bar{\partial}\lambda)-|\sigma|(|\sigma|^2-\lambda^2)(\partial\phi+2i\partial u)\right]
\]
\[
\left/\left[e^{2u+i\phi}(|\sigma|^2-\lambda^2)^2\right]\right. .
\]
\end{Prop}
\begin{proof}
The mean curvature vector of the plane ${\mathcal P}$ is the trace of the second fundamental form, which is
\[
H^j=A_{(11)}^{\;\;\;\;\;j}+ A_{(22)}^{\;\;\;\;\;j}.
\]
The result follows from computing this with the aid of the previous Proposition.
\end{proof}
\vspace{0.1in}

\begin{Note}
We can also write the mean curvature vector component (see \cite{gak8} for a variational derivation of this formula)
\begin{equation}\label{e:meanc}
H^\xi=\frac{2e^{-2u}}{\sqrt{|\lambda^2-|\sigma|^2|}}\left[
              ie^{-2u}\partial\left(\frac{\bar{\sigma}e^{2u}}{\sqrt{|\lambda^2-|\sigma|^2|}}\right)
       -\bar{\partial}\left(\frac{\lambda}{\sqrt{|\lambda^2-|\sigma|^2|}}\right)\right].
\end{equation}
\end{Note}

\begin{Cor}
A holomorphic graph has vanishing mean curvature.
\end{Cor}
\begin{proof}
This follows from inserting $\sigma=0$ in equation (\ref{e:meanc}).
\end{proof}
\vspace{0.1in}

\vspace{0.1in}

\subsection{Angles between positive surfaces}\label{s:angles}

Proposition 2 in \cite{Gk02} gives a reduction of the action of the group $O(n,m)$ by the maximal compact subgroup $O(n)\times O(m)$ leading to a matrix of angles between pairs of positive $n$-planes in indefinite ${\mathbb R}^{n+m}$. In the definite case, these angles were defined by Wong in the 1960's \cite{Wong}. This reduction carries over to $n+m$-dimensional manifolds and we now consider in more detail the case of intersecting positive surfaces in $TS^2$. 

The positive surfaces we have in mind are the flowing disc $f_s(D)$ and the boundary plane ${\mathcal P}$ intersecting along $f_s(\partial D)$. While we are working pointwise along this intersection, for ease of notation we drop any mention of the point of intersection.

Assume that $f_s(D)$ and ${\mathcal P}$ are positive and intersect at a point. Choose an orthonormal frame 
$E_{(\mu)}=\{e_{(1)},e_{(2)},f_{(1)},f_{(2)}\}$ and coframe $E^{(\mu)}=\{e^{(1)},e^{(2)},f^{(1)},f^{(2)}\}$ so that 
$\{e_{(a)}\}$ span the tangent plane $Tf_s(D)$, while 
$\{f_{(a)}\}$ span the normal plane $Nf_s(D)$. Similarly, let $\tilde{E}_{(\mu)}=\{\tilde{e}_{(1)},\tilde{e}_{(2)},\tilde{f}_{(1)},\tilde{f}_{(2)}\}$ and 
$\tilde{E}^{(\mu)}=\{\tilde{e}^{(1)},\tilde{e}^{(2)},\tilde{f}^{(1)},\tilde{f}^{(2)}\}$ be similar frames and coframes for ${\mathcal P}$. 

\begin{Def}
For frames as above define the $4\times 4$ matrix
\[
M_{(\mu)}^{\;\;(\nu)}=<E_{(\mu)},\tilde{E}^{(\nu)}>.
\]
\end{Def}
Then
\vspace{0.1in}

\begin{Prop}
There exist adapted frames such that the matrix $M$ has the form:
\[
M=\left(
     \begin{array}{cccc}
      \cosh A & 0 & 0 & \sinh A\\
       0 &  \cosh B & \sinh B  & 0 \\
       0 &  \sinh B & \cosh B & 0 \\
       \sinh A & 0 & 0 & \cosh A
     \end{array}\right),
\]
for hyperbolic angles $A,B\in{\mathbb R}$.
\end{Prop}
\begin{proof}
This is a special case of Proposition 2 in \cite{Gk02}.
\end{proof}

\vspace{0.1in}
 
\begin{Cor}\label{c:multi}
In the special case where $f_s(D)$ and ${\mathcal P}$ intersect along a curve the angle matrix reduces to
\[
M=\left(
     \begin{array}{cccc}
      1 & 0 & 0 & 0\\
       0 &  \cosh B & \sinh B  & 0 \\
       0 &  \sinh B & \cosh B & 0 \\
       0 & 0 & 0 & 1
     \end{array}\right).
\]
\end{Cor}

Here we have established the basic fact of neutral geometry that if two positive planes intersect on a line, then their normal planes must also intersect each other in a line. To pursue this further we need to relate the various angles between the tangent and normal planes of the two surfaces and their intersections, together with their complex slopes.

\begin{Prop} \label{p:hypang1}
Define $\mu=|\sigma|/|\lambda|$ and $\tilde{\mu}=|\tilde{\sigma}|/|\tilde{\lambda}|$. By positivity of the surfaces $\mu<1$ and $\tilde{\mu}<1$. Then
\begin{align}\label{e:hypangle}
\cosh B&=\frac{(1-\tilde{\mu}^2)(1+2\cos 2\theta\;\mu+\mu^2)+(1-\mu^2)(1+2\cos 2\tilde{\theta}\;\tilde{\mu}+\tilde{\mu}^2)}
{2(1-\tilde{\mu}^2)^{\scriptstyle{\frac{1}{2}}}(1-\mu^2)^{\scriptstyle{\frac{1}{2}}}(1+\cos 2\theta\;\mu)(1+\cos 2\tilde{\theta}\;\tilde{\mu})}\nonumber\\
&\qquad\qquad\qquad\qquad -\frac{\sin 2\theta\sin 2 \tilde{\theta}\;\mu\tilde{\mu}}{(1+\cos 2\theta\;\mu)(1+\cos 2\tilde{\theta}\;\tilde{\mu})}\nonumber\\
&=\frac{(1-\tilde{\mu}^2)(1+2\cos 2\psi\;\mu+\mu^2)+(1-\mu^2)(1+2\cos 2\tilde{\psi}\;\tilde{\mu}+\tilde{\mu}^2)}
  {2(1-\tilde{\mu}^2)^{\scriptstyle{\frac{1}{2}}}(1-\mu^2)^{\scriptstyle{\frac{1}{2}}}(1+\cos 2\psi\;\mu)(1+\cos 2\tilde{\psi}\;\tilde{\mu})}\nonumber\\
&\qquad\qquad\qquad\qquad -\frac{\sin 2\psi\sin 2 \tilde{\psi}\;\mu\tilde{\mu} }
{(1+\cos 2\psi\;\mu)(1+\cos 2\tilde{\psi}\;\tilde{\mu})},
\end{align}
where $\theta$,$\tilde{\theta}$,$\psi$ and $\tilde{\psi}$ determine the angles that the lines of intersection make with the canonical frames on $TD$,$T{\mathcal P}$,$ND$ and $N{\mathcal P}$, respectively.
\end{Prop}

\begin{proof}
Consider the tangent space to a point $\gamma\in D\cap{\mathcal P}$ (assumed to be non-empty). This can be split two distinct ways
\[
T\gamma TS^2=T_\gamma D\oplus N_\gamma D=T_\gamma {\mathcal P}\oplus N_\gamma {\mathcal P},
\]
and adapted orthonormal bases $\{e_{(a)},f_{(b)}\}_{a,b=1}^2$ and $\{\tilde{e}_{(a)},\tilde{f}_{(b)}\}_{a,b=1}^2$ chosen for the splittings (respectively).

Recall the canonical frames from Proposition \ref{p:frames}:
\[
e_{(1)}=2{\mathbb R}{ e}\left[\alpha_1\left(\frac{\partial}{\partial \xi}+\partial F\frac{\partial}{\partial \eta}
          +\partial \bar{F}\frac{\partial}{\partial \bar{\eta}}    \right) \right],
\]
\[
e_{(2)}=2{\mathbb R}{ e}\left[\alpha_2\left(\frac{\partial}{\partial \xi}+\partial F\frac{\partial}{\partial \eta}
          +\partial \bar{F}\frac{\partial}{\partial \bar{\eta}}    \right) \right],
\]
\[
f_{(1)}=2{\mathbb R}{ e}\left[\alpha_2\left(\frac{\partial}{\partial \xi}
       +(\bar{\partial} \bar{F}-2(F\partial u-\bar{F}\bar{\partial} u))\frac{\partial}{\partial \eta}
          -\partial \bar{F}\frac{\partial}{\partial \bar{\eta}}    \right) \right],
\]
\[
f_{(2)}=2{\mathbb R}{ e}\left[\alpha_1\left(\frac{\partial}{\partial \xi}
       +(\bar{\partial} \bar{F}-2(F\partial u-\bar{F}\bar{\partial} u))\frac{\partial}{\partial \eta}
          -\partial \bar{F}\frac{\partial}{\partial \bar{\eta}}    \right) \right],
\]
for
\begin{equation}\label{e:alphas}
\alpha_1=\frac{e^{-u-{\scriptstyle \frac{1}{2}}\phi i+{\scriptstyle \frac{1}{4}}\pi i}}
      {\sqrt{2}[-\lambda-|\sigma|]^{\scriptstyle \frac{1}{2}}},
\qquad\qquad
\alpha_2=\frac{e^{-u-{\scriptstyle \frac{1}{2}}\phi i-{\scriptstyle \frac{1}{4}}\pi i}}
      {\sqrt{2}[(-\lambda+|\sigma|)]^{\scriptstyle \frac{1}{2}}},
\end{equation}
where $\bar{\partial}F=-|\sigma|e^{-i\phi}$ and $e^{2u}=4(1+\xi\bar{\xi})^{-2}$.


Analogous expressions hold for the bases of $T{\mathcal P}$ and $N{\mathcal P}$, with a tilde on appropriate quantities. 

Assume that $D$ and ${\mathcal P}$ intersect along a curve and rotate the frames in the tangent bundle so that
the first vector of each basis coincide and lie along the tangent to the intersection. Thus, referring to the above frames, there exists  
rotated frames $\{\mathring{e}_{(a)},\mathring{f}_{(b)}\}$ and $\{\mathring{\tilde{e}}_{(a)},\mathring{\tilde{f}}_{(b)}\}$
with
\[
\mathring{e}_{(1)}=\cos\theta {e}_{(1)}-\sin\theta {e}_{(2)},
\qquad\qquad
\mathring{e}_{(2)}=\sin\theta {e}_{(1)}+\cos\theta {e}_{(2)},
\] 
\[
\mathring{\tilde{e}}_{(1)}=\cos\tilde{\theta} {\tilde{e}}_{(1)}-\sin\tilde{\theta} {\tilde{e}}_{(2)},
\qquad\qquad
\mathring{\tilde{e}}_{(2)}=\sin\tilde{\theta} {\tilde{e}}_{(1)}+\cos\tilde{\theta} {\tilde{e}}_{(2)},
\]
and $\mathring{e}_{(1)}=\mathring{\tilde{e}}_{(1)}$, for some $\theta,\tilde{\theta}\in [0,2\pi)$.

Now compute $\cosh B={\mathbb G}(\mathring{e}_{(2)},\mathring{\tilde{e}}_{(2)})$ with the aid of these expressions. First note that $\mathring{e}_{(1)}=\mathring{\tilde{e}}_{(1)}$ implies that
\begin{equation}\label{e:intsect1}
\cos\theta {\alpha}_{1}-\sin\theta {\alpha}_{2}=\cos\tilde{\theta} {\tilde{\alpha}}_{1}-\sin\tilde{\theta} {\tilde{\alpha}}_{2},
\end{equation}
\begin{equation}\label{e:intsect2}
(\cos\theta {\alpha}_{1}-\sin\theta {\alpha}_{2})\partial(F-\tilde{F})+(\cos\theta \bar{\alpha}_{1}
-\sin\theta \bar{\alpha}_{2})\bar{\partial}(F-\tilde{F})=0.
\end{equation}

Substituting the expressions for $\alpha_1$ and $\alpha_2$ in equation (\ref{e:intsect1}), we find
\begin{equation}\label{e:intsect2a}
e^{{\scriptstyle \frac{1}{2}}{\phi}}\left(\frac{\cos{\theta}}{[-{\lambda}-|{\sigma}|]^{\scriptstyle \frac{1}{2}}}
    -\frac{i\sin{\theta}}{[-{\lambda}+|{\sigma}|]^{\scriptstyle \frac{1}{2}}}\right)
  =e^{{\scriptstyle \frac{1}{2}}\tilde{\phi}}\left(\frac{\cos\tilde{\theta}}{[-\tilde{\lambda}-|\tilde{\sigma}|]^{\scriptstyle \frac{1}{2}}}
    -\frac{i\sin\tilde{\theta}}{[-\tilde{\lambda}+|\tilde{\sigma}|]^{\scriptstyle \frac{1}{2}}}\right).
\end{equation}
The norm and argument of this equation give
\begin{equation}\label{e:hypangcon1}
(\tilde{\lambda}^2-|\tilde{\sigma}|^2)(-\lambda+|\sigma|\cos 2\theta)=(\lambda^2-|\sigma|^2)(-\tilde{\lambda}+|\tilde{\sigma}|\cos 2\tilde{\theta}),
\end{equation}
and
\begin{equation}\label{e:hypangcon2}
e^{(\phi-\tilde{\phi})i}=\frac{(\lambda^2-|\sigma|^2)(|\tilde{\sigma}|-\tilde{\lambda}\cos 2\tilde{\theta}
   -i(\tilde{\lambda}^2-|\tilde{\sigma}|^2)^{\scriptstyle \frac{1}{2}} \sin 2\tilde{\theta})}
   {(\tilde{\lambda}^2-|\tilde{\sigma}|^2)(|{\sigma}|-{\lambda}\cos 2{\theta}
   -i({\lambda}^2-|{\sigma}|^2)^{\scriptstyle \frac{1}{2}} \sin 2{\theta})}.
\end{equation}

Moreover, introducing the complex slopes $\sigma$ and $\rho=\vartheta+i\lambda$ as before, equation (\ref{e:intsect2}) can be split into real and imaginary parts. The result, with the aid of equation (\ref{e:hypangcon2}), is
\begin{equation}\label{e:hypangcon3}
\vartheta-\tilde{\vartheta}=\frac{\sin 2\tilde{\theta}(\tilde{\lambda}^2-|\tilde{\sigma}|^2)^{\scriptstyle \frac{1}{2}}}
  {-\tilde{\lambda}+|\tilde{\sigma}|\cos 2\tilde{\theta}}|\tilde{\sigma}|
   -\frac{\sin 2{\theta}({\lambda}^2-|{\sigma}|^2)^{\scriptstyle \frac{1}{2}}}
  {-{\lambda}+|{\sigma}|\cos 2{\theta}}|{\sigma}|,
\end{equation}
and
\begin{equation}\label{e:hypangcon4}
\lambda-\tilde{\lambda}=-\frac{\tilde{\lambda}\cos 2\tilde{\theta}-|\tilde{\sigma}|}
{-\tilde{\lambda}+|\tilde{\sigma}|\cos 2\tilde{\theta}}|\tilde{\sigma}|
  +\frac{{\lambda}\cos 2{\theta}-|{\sigma}|}{-{\lambda}+|{\sigma}|\cos 2{\theta}}|{\sigma}|.
\end{equation}
Note that equations (\ref{e:hypangcon1}) and (\ref{e:hypangcon4}) are, in fact, the same equation.

We now compute, with the aid of the metric expression, that
\begin{align}
{\mathbb G}(\mathring{e}_{(2)},\mathring{\tilde{e}}_{(2)})&=\frac{4}{(1+\xi\bar{\xi})^2}{\mathbb I}{\mbox {m}}\left[  
     (\sin\theta \bar{\alpha}_{1}+\cos\theta \bar{\alpha}_{2})(\sin\tilde{\theta} \tilde{\alpha}_{1}+\cos\tilde{\theta} \tilde{\alpha}_{2})
     (\bar{\rho}-\tilde{\rho})\right. \nonumber\\
&\qquad \qquad\qquad \qquad\left. -(\sin\theta {\alpha}_{1}+\cos\theta {\alpha}_{2})(\sin\tilde{\theta} \tilde{\alpha}_{1}+\cos\tilde{\theta} \tilde{\alpha}_{2})(\tilde{\sigma}+\sigma)\right].\nonumber
\end{align}

The expression for the angle given in the Proposition follows from a lengthy computation in which equations (\ref{e:hypangcon2}) and (\ref{e:hypangcon3}), recalling that $\rho=\vartheta+i\lambda$,
along with the definitions of $\alpha_1$ and $\alpha_2$ given in equations (\ref{e:alphas}), are substituted in the above expression. These result in

\begin{align}\label{e:hypangle1}
\cosh B=&-\frac{(\lambda^2-|\sigma|^2)^{\scriptstyle{\frac{1}{2}}}}
{(\tilde{\lambda}^2-|\tilde{\sigma}|^2)^{\scriptstyle{\frac{1}{2}}}(-\lambda+|\sigma|\cos 2\theta)}
\left(\lambda+\frac{\tilde{\lambda}\cos 2\tilde{\theta}-|\tilde{\sigma}|}{|\tilde{\sigma}|\cos 2\tilde{\theta}-\tilde{\lambda}}|\tilde{\sigma}|\right)
\nonumber\\
&\qquad\qquad-\frac{\sin 2\theta\sin 2\tilde{\theta}|\sigma||\tilde{\sigma}|}
{(-\lambda+|\sigma|\cos 2\theta)(-\tilde{\lambda}+|\tilde{\sigma}|\cos 2\tilde{\theta})},
\end{align}
where the intersection constraint is
\begin{equation}\label{e:intersectionconstraint1}
(\tilde{\lambda}^2-|\tilde{\sigma}|^2)(-\lambda+|\sigma|\cos 2\theta)=(\lambda^2-|\sigma|^2)(-\tilde{\lambda}+|\tilde{\sigma}|\cos 2\tilde{\theta}).
\end{equation}

The first equation of the Proposition follows from using the definitions $\mu=|\sigma|/|\lambda|$ and $\tilde{\mu}=|\tilde{\sigma}|/|\tilde{\lambda}|$. Note that, the intersection equation can be taken as defining the ratio of the twists of the intersecting surfaces:

\begin{equation}\label{e:intersectionconstraint}
\frac{\tilde{\lambda}}{\lambda}=\frac{(1-\mu^2)(1+\cos 2\tilde{\theta}\;\tilde{\mu})}{(1-\tilde{\mu}^2)(1+\cos 2\theta\;\mu)}.
\end{equation}

The preceding arguments, which involve the angles $\theta$ and $\tilde{\theta}$, can also be expressed in terms of the angles $\psi$ and $\tilde{\psi}$ tracking the intersection of the {\it normal} bundles $ND$ and $N{\mathcal P}$, respectively.  

More specifically suppose that the adapted frames in the normal bundles are
\[
\mathring{f}_{(1)}=\cos\psi {f}_{(1)}+\sin\psi {f}_{(2)},
\qquad\qquad
\mathring{f}_{(2)}=-\sin\psi {f}_{(1)}+\cos\psi {f}_{(2)},
\] 
\[
\mathring{\tilde{f}}_{(1)}=\cos\tilde{\psi} {\tilde{f}}_{(1)}+\sin\tilde{\psi} {\tilde{f}}_{(2)},
\qquad\qquad
\mathring{\tilde{f}}_{(2)}=-\sin\tilde{\psi} {\tilde{f}}_{(1)}+\cos\tilde{\psi} {\tilde{f}}_{(2)},
\]
and $\mathring{f}_{(2)}=\mathring{\tilde{f}}_{(2)}$, for some $\psi,\tilde{\psi}\in [0,2\pi)$.

Then the angle can be computed using $\cosh B=-{\mathbb{G}}(\mathring{f}_{(1)},\mathring{\tilde{f}}_{(1)})$ which yields the second part of the 
Proposition.

We also have the relations ({\it c.f.} equations (\ref{e:hypangcon1}), (\ref{e:hypangcon2}) and (\ref{e:hypangcon3})):
\[
(\tilde{\lambda}^2-|\tilde{\sigma}|^2)(-\lambda+|\sigma|\cos 2\psi)=(\lambda^2-|\sigma|^2)(-\tilde{\lambda}+|\tilde{\sigma}|\cos 2\tilde{\psi}),
\]
\[
e^{(\phi-\tilde{\phi})i}=\frac{(\lambda^2-|\sigma|^2)(|\tilde{\sigma}|-\tilde{\lambda}\cos 2\tilde{\psi}
   -i(\tilde{\lambda}^2-|\tilde{\sigma}|^2)^{\scriptstyle \frac{1}{2}} \sin 2\tilde{\psi})}
   {(\tilde{\lambda}^2-|\tilde{\sigma}|^2)(|{\sigma}|-{\lambda}\cos 2{\psi}
   -i({\lambda}^2-|{\sigma}|^2)^{\scriptstyle \frac{1}{2}} \sin 2{\psi})},
\]
\[
\vartheta-\tilde{\vartheta}=-\frac{\sin 2\tilde{\psi}(\tilde{\lambda}^2-|\tilde{\sigma}|^2)^{\scriptstyle \frac{1}{2}}}
  {-\tilde{\lambda}+|\tilde{\sigma}|\cos 2\tilde{\psi}}|\tilde{\sigma}|
   +\frac{\sin 2{\psi}({\lambda}^2-|{\sigma}|^2)^{\scriptstyle \frac{1}{2}}}
  {-{\lambda}+|{\sigma}|\cos 2{\psi}}|{\sigma}|.
\]
\end{proof}
\vspace{0.1in}

We now state a series of corollaries to these relations that play a key role in controlling the flow of the edge.

\begin{Cor}
If $f_s(D)$ is holomorphic along $f_s(\partial D)$, then 
\[
\cosh B=\frac{{\mbox{Area}}_\Omega(T\tilde{\sigma})}{{\mbox{Area}}_{\mathbb{G}}(T\tilde{\sigma})}.
\]
\end{Cor}
\vspace{0.1in}

\begin{Cor}
The hyperbolic angle can also be written
\begin{align}
\cosh B&=\frac{|\lambda|^{\textstyle{\frac{1}{2}}}(1+2\mu\cos 2\theta+\mu^2)}
{2|\tilde{\lambda}|^{\textstyle{\frac{1}{2}}}(1+\mu\cos 2\theta)^{\textstyle{\frac{3}{2}}}(1+\tilde{\mu}\cos 2\tilde{\theta})^{\textstyle{\frac{1}{2}}}}
+\frac{|\tilde{\lambda}|^{\textstyle{\frac{1}{2}}}(1+2\tilde{\mu}\cos 2\tilde{\theta}+\tilde{\mu}^2)}
{2|\lambda|^{\textstyle{\frac{1}{2}}}(1+\mu\cos 2\theta)^{\textstyle{\frac{1}{2}}}(1+\tilde{\mu}\cos 2\tilde{\theta})^{\textstyle{\frac{3}{2}}}}\nonumber\\
&\qquad\qquad\qquad\qquad-\frac{\sin 2\theta\sin 2\tilde{\theta}}{(1+\mu\cos 2\theta)(1+\tilde{\mu}\cos 2\tilde{\theta})},\nonumber\\
&=\frac{|\lambda|^{\textstyle{\frac{1}{2}}}(1+2\mu\cos 2\psi+\mu^2)}
{2|\tilde{\lambda}|^{\textstyle{\frac{1}{2}}}(1+\mu\cos 2\psi)^{\textstyle{\frac{3}{2}}}(1+\tilde{\mu}\cos 2\tilde{\psi})^{\textstyle{\frac{1}{2}}}}
+\frac{|\tilde{\lambda}|^{\textstyle{\frac{1}{2}}}(1+2\tilde{\mu}\cos 2\tilde{\psi}+\tilde{\mu}^2)}
{2|\lambda|^{\textstyle{\frac{1}{2}}}(1+\mu\cos 2\psi)^{\textstyle{\frac{1}{2}}}(1+\tilde{\mu}\cos 2\tilde{\psi})^{\textstyle{\frac{3}{2}}}}\nonumber\\
&\qquad\qquad\qquad\qquad-\frac{\sin 2\psi\sin 2\tilde{\psi}}{(1+\mu\cos 2\psi)(1+\tilde{\mu}\cos 2\tilde{\psi})},\label{e:hypang2}
\end{align}
\end{Cor}
\begin{proof}
The two equality follows from Proposition \ref{p:hypang1} and equation (\ref{e:intersectionconstraint}) and the equivalent $\psi$ version.
\end{proof}\vspace{0.1in}

\begin{Cor}
The hyperbolic angle between two intersecting positive planes satisfies the inequalities
\[
\frac{1-\mu\tilde{\mu}}{(1-\mu^2)^{\scriptstyle{\frac{1}{2}}}(1-\tilde{\mu}^2)^{\scriptstyle{\frac{1}{2}}}}\leq\cosh B
  \leq\frac{1+\mu\tilde{\mu}}{(1-\mu^2)^{\scriptstyle{\frac{1}{2}}}(1-\tilde{\mu}^2)^{\scriptstyle{\frac{1}{2}}}}.
\]
\end{Cor}
\begin{proof}
These follow from maximizing and minimizing $\cosh B$ over $\theta$ and $\tilde{\theta}$.
\end{proof}

\vspace{0.1in}
\begin{Cor}\label{c:nulltogether}
Consider the set $U_B$ of pairs of intersecting positive planes with fixed hyperbolic angle $B$ in ${\mathbb R}^{2,2}$. Let $(P_s,\tilde{P}_s)\in U_B$ be a curve of pairs of planes for $s\in[0,1)$. Then $\lim_{s\rightarrow 1}P_s$ is degenerate iff $\lim_{s\rightarrow 1}\tilde{P}_s$ is degenerate.
\end{Cor}
\begin{proof}
From the left-hand inequality of the previous Corollary it is clear that, for fixed $B$, $\lim_{s\rightarrow 1}\mu=1$ iff 
$\lim_{s\rightarrow 1}\tilde{\mu}=1$.
\end{proof}

\vspace{0.1in}

\begin{Cor}\label{c:ineq2}
The following inequality holds for intersecting positive planes forming an angle $B>0$:
\begin{equation}\label{e:ineq2}
\frac{\tilde{\mu}-\cosh B\sinh B\;(1-\tilde{\mu}^2)}{\tilde{\mu}^2+\cosh^2 B\;(1-\tilde{\mu}^2)}\leq \mu
  \leq\frac{\tilde{\mu}+\cosh B\sinh B\;(1-\tilde{\mu}^2)}{\tilde{\mu}^2+\cosh^2 B\;(1-\tilde{\mu}^2)},
\end{equation}
which can also be written
\begin{equation}\label{e:ineq3}
\frac{\cosh B-\tilde{\mu}\sinh B}{(1-\tilde{\mu}^2)^{\scriptstyle{\frac{1}{2}}}}\leq\frac{1}{(1-\mu^2)^{\scriptstyle{\frac{1}{2}}}}\leq \frac{\cosh B+\tilde{\mu}\sinh B}{(1-\tilde{\mu}^2)^{\scriptstyle{\frac{1}{2}}}}.
\end{equation}
\end{Cor}
\vspace{0.1in}


\section{ Mean Curvature Flow in {$TS^2$}{}}\label{s:mcf}

Throughout we use the term positive surface to mean spacelike surface: the induced metric is positive definite.

\vspace{0.1in}

\subsection{The unparameterized flow}

We now investigate the initial boundary value problem with initial and boundary conditions (i) to (iv) as before. We consider the evolution when the flowing and boundary planes are graphs of sections of $TS^2\rightarrow S^2$. In this case it is most convenient to use the base to parameterize the surfaces. That is, we consider a flowing surface given by $\xi\mapsto(\xi,\eta=F_s(\xi,\bar{\xi}))$ and boundary plane given by $\xi\mapsto(\xi,\eta=\tilde{F}(\xi,\bar{\xi}))$. 

For the extraction of interior estimates it is sufficient to consider the unparameterized flow
\[
\frac{d f}{ds}^\bot=H, 
\]
in which the time derivative is projected perpendicular to the surface. It is well known that this can be achieved locally and we compute the explicit expressions for the flow of the complex function $F_s$. 

\begin{Prop}
For a positive graph in $TS^2$, the mean curvature flow is
\begin{align}
\frac{\partial F}{\partial s}=&g^{jk}\partial_j\partial_k F+\frac{i\bar{\sigma}}{\Delta}\left((\sigma\xi-\bar{\rho}\bar{\xi})
    (1+\xi\bar{\xi})+\bar{F}-\bar{\xi}^2F\right)\nonumber\\
&=\frac{(1+\xi\bar{\xi})^2}{2(\lambda^2-\sigma\bar{\sigma})}\left(-2\bar{\sigma}\partial\lambda-i\bar{\sigma}\bar{\partial}\sigma+2\lambda\partial\bar{\sigma}+i\sigma\bar{\partial}\bar{\sigma}
   +\frac{4i\bar{\sigma}(\sigma\xi+\lambda i\bar{\xi})}{1+\xi\bar{\xi}}\right)\label{e:floweq}.
\end{align}
\end{Prop}
\begin{proof}
Consider an evolving positive disc $f:D\times[0,s_0)\rightarrow TS^2$ such that 
$f_s(\xi,\bar{\xi})=(\xi,\bar{\xi},F_s(\xi,\bar{\xi}),\bar{F}_s(\xi,\bar{\xi}))$. Then
\[
\frac{\partial f}{\partial s}=\frac{\partial F}{\partial s}\frac{\partial }{\partial \eta}+\frac{\partial \bar{F}}{\partial s}\frac{\partial }{\partial \bar{\eta}}.
\]
Projecting onto the normal of $\tilde{\mathcal P}$
\begin{align}
\frac{\partial f}{\partial s}^\bot=&(^\bot P^\xi_\eta \dot{F}+\;^\bot P^\xi_{\bar{\eta}} \dot{\bar{F}})\frac{\partial }{\partial \xi}
    +(^\bot P^\eta_\eta \dot{F}+\;^\bot P^\eta_{\bar{\eta}} \dot{\bar{F}})\frac{\partial }{\partial \eta}\nonumber\\
&\qquad+(^\bot P^{\bar{\xi}}_{\bar{\eta}} \dot{\bar{F}}+\;^\bot P^{\bar{\xi}}_{\eta} \dot{F})\frac{\partial }{\partial \bar{\xi}}
    +(^\bot P^{\bar{\eta}}_{\bar{\eta}} \dot{\bar{F}}+\;^\bot P^{\bar{\eta}}_{\eta} \dot{F})\frac{\partial }{\partial \bar{\eta}}\nonumber,
\end{align}
and so the mean curvature flow is
\[
^\bot P^\xi_\eta \dot{F}+\;^\bot P^\xi_{\bar{\eta}} \dot{\bar{F}}=H^\xi,
\]
or from the expressions of the projection operators given in the proof of Proposition \ref{p:2ndff}
\[
\frac{\lambda i}{2\Delta}\dot{F}-\frac{\bar{\sigma}}{2\Delta} \dot{\bar{F}}=H^\xi.
\]
Combining this with its complex conjugate we have
\begin{equation}\label{e:fdot1}
\dot{F}=-2\lambda iH^\xi+2\bar{\sigma}H^{\bar{\xi}}.
\end{equation}
Using the expression (\ref{e:meanc}) for the mean curvature we get that
\begin{align}
H^\xi=&\frac{(1+\xi\bar{\xi})^2}{4\Delta^2}\Big[2\left(i\partial\bar{\sigma}-\bar{\partial}\lambda
    -\frac{2i\bar{\xi}\bar{\sigma}}{1+\xi\bar{\xi}}\right)\Delta\nonumber\\
  &\qquad\qquad\qquad-2i\lambda\bar{\sigma}\partial\lambda+i\sigma\bar{\sigma}\partial\bar{\sigma}+i\bar{\sigma}^2\partial \sigma+2\lambda^2\bar{\partial}\lambda
      -\lambda\sigma\bar{\partial}\bar{\sigma}-\lambda\bar{\sigma}\bar{\partial}\sigma\Big]\nonumber,
\end{align}
and the second equality stated in the Proposition follows from inserting this in equation (\ref{e:fdot1}).

To see that the first equality in the Proposition holds, we compute
\begin{align}
g^{jk}\partial_j\partial_k F&=\frac{(1+\xi\bar{\xi})^2}{2\Delta}\left(i\bar{\sigma}\partial^2F-2\lambda\partial\bar{\partial}F
     -i\sigma\bar{\partial}^2F \right)\nonumber\\
&=\frac{(1+\xi\bar{\xi})^2}{2\Delta}\left[i\bar{\sigma}\partial\left(\theta+i\lambda+\frac{2\bar{\xi}F}{1+\xi\bar{\xi}}\right)
    +2\lambda\partial\bar{\sigma}+i\sigma\bar{\partial}\bar{\sigma} \right]\nonumber\\
&=\frac{(1+\xi\bar{\xi})^2}{2\Delta}\left[-2\bar{\sigma}\partial\lambda-i\bar{\sigma}\bar{\partial}\sigma+i\sigma\bar{\partial}\bar{\sigma} 
   +2\lambda\partial\bar{\sigma}\right.\nonumber\\
&\qquad\qquad\qquad\qquad\left.+i\bar{\sigma}\left(\frac{2(\sigma\xi+\rho\bar{\xi})}{1+\xi\bar{\xi}}
   -\frac{2(\bar{F}-\bar{\xi}^2F)}{(1+\xi\bar{\xi})^2} \right)\right]\nonumber,
\end{align}
where we have used identity (\ref{e:id2}) in the more convenient form
\[
\partial\theta=i\partial\lambda-(1+\xi\bar{\xi})^2\partial\left(\frac{\bar{\sigma}}{(1+\xi\bar{\xi})^2}\right)-\frac{2F}{(1+\xi\bar{\xi})^2}.
\]
Thus 
\[
g^{jk}\partial_j\partial_k F+\frac{i\bar{\sigma}}{\Delta}\left((\sigma\xi-\bar{\rho}\bar{\xi})
    (1+\xi\bar{\xi})+\bar{F}-\bar{\xi}^2F\right)\qquad\qquad\qquad\qquad\qquad\qquad\qquad\qquad
\]
\[
=\frac{(1+\xi\bar{\xi})^2}{2(\lambda^2-\sigma\bar{\sigma})}\left(-2\bar{\sigma}\partial\lambda-i\bar{\sigma}\bar{\partial}\sigma
    +i\sigma\bar{\partial}\bar{\sigma}+2\lambda\partial\bar{\sigma}+\frac{4i\bar{\sigma}(\sigma\xi+\lambda i\bar{\xi})}{1+\xi\bar{\xi}}\right),
\]
as claimed.
\end{proof}

\subsection{Evolution equations for a positive surface}\label{s:evolution}

\begin{Prop}\label{p:shearflow}
Under the mean curvature flow the shear evolves by:
\[
\left(\frac{\partial }{\partial s}-{\mathbb G}^{jk}\partial_j\partial_k\right)\sigma=\frac{H_1(1+\xi\bar{\xi})^2+2H_2(1+\xi\bar{\xi})+2H_3}{2\Delta^2},
\]
where
\begin{align}
H_1=&-4\lambda \sigma \partial \lambda \bar{\partial} \lambda
-2i \lambda \bar{\sigma} \partial \lambda \partial \sigma
+2(\lambda^{2}+|\sigma|^2) \partial \lambda \bar{\partial} \sigma
+2i \lambda \sigma \partial \lambda \partial \bar{\sigma}
+2\lambda^{2} \bar{\partial} \lambda \partial \sigma \nonumber\\
&\qquad+2\sigma^{2} \bar{\partial} \lambda \partial \bar{\sigma}
+i \bar{\sigma}^{2}\left(\partial \sigma\right)^{2}
-2\lambda \bar{\sigma} \partial \sigma \bar{\partial} \sigma
-2\lambda \sigma \bar{\partial} \sigma \partial \bar{\sigma}
-i \sigma^{2}\left(\partial \bar{\sigma}\right)^{2} \nonumber,
\end{align}
\begin{align}
H_2&=-2 \sigma \partial \lambda\left(2 i \lambda\bar{\sigma} \bar{\xi}+ (\lambda^{2}+\sigma \bar{\sigma}) \xi \right)
+2 \sigma \bar{\partial} \lambda \left( \lambda^{2}-\sigma \bar{\sigma}\right) \bar{\xi}
+ \partial\sigma \left(i\bar{\sigma} \bar{\xi}(3 \lambda^{2} - \sigma \bar{\sigma}) +2 \lambda^{3} \xi \right)\nonumber\\
& \qquad\qquad
+ \bar{\partial}\sigma\left( i\sigma \xi-2 \lambda \bar{\xi} \right)(\lambda^{2} -\sigma \bar{\sigma})
+2 \partial \bar{\sigma} \left( i\sigma^{2} \bar{\sigma} \bar{\xi}+ \lambda\sigma^{2} \xi \right) \nonumber,
\end{align}
and
\[
H_3=-\sigma \left( i\sigma \xi^{2}-3 i\bar{\sigma} \bar{\xi}^{2}-4 \lambda\xi \bar{\xi} \right)(\lambda^{2} -\sigma \bar{\sigma}).
\]
In addition,
\[
\left(\frac{\partial }{\partial s}-{\mathbb G}^{jk}\partial_j\partial_k\right)\rho=\frac{H_4(1+\xi\bar{\xi})^3+H_5(1+\xi\bar{\xi})^2
     +H_6(1+\xi\bar{\xi})+H_7}{2(1+\xi\bar{\xi})\Delta^2},
\]
where
\begin{align}
H_4&=4\lambda \bar{\sigma} \left(\partial \lambda\right)^{2}
-2\bar{\sigma}^{2} \partial \lambda \partial \sigma
+2i \lambda \bar{\sigma} \partial \lambda \bar{\partial} \sigma
-4\lambda^{2} \partial \lambda \partial \bar{\sigma}
-2\sigma \bar{\sigma} \partial \lambda \partial \bar{\sigma}
-2i \lambda \sigma \partial \lambda \bar{\partial} \bar{\sigma} \nonumber\\
&\qquad -i \bar{\sigma}^{2}\partial \sigma \bar{\partial} \sigma
+2\lambda \bar{\sigma} \partial \sigma \partial \bar{\sigma}
+i \lambda^{2}\partial \sigma \bar{\partial} \bar{\sigma}
-i \lambda^{2}\bar{\partial} \sigma \partial \bar{\sigma}
+2\lambda \sigma \left(\partial \bar{\sigma}\right)^{2}
+i \sigma^{2}\partial \bar{\sigma} \bar{\partial} \bar{\sigma} \nonumber,
\end{align}
\begin{align}
H_5&=-4 \partial \lambda \left(2 i \lambda\sigma \bar{\sigma} \xi
- \lambda^{2} \bar{\sigma} \bar{\xi}
-\sigma \bar{\sigma}^{2} \bar{\xi} \right) +2\partial\sigma\left( i \lambda^{2} \bar{\sigma} \xi
+ i\sigma \bar{\sigma}^{2} \xi
-2 \lambda \bar{\sigma}^{2} \bar{\xi} \right) \nonumber\\
&\qquad +2\left(2 \partial \bar{\sigma} i \lambda^{2}\sigma \xi
-2 \partial \bar{\sigma} \lambda\sigma \bar{\sigma} \bar{\xi}
+ \bar{\partial} \bar{\sigma} i \lambda^{2}\sigma \bar{\xi}
- \bar{\partial} \bar{\sigma} i\sigma^{2} \bar{\sigma} \bar{\xi} \right)\nonumber,
\end{align}
\[
H_6=-4( i (\lambda^{2}-\sigma \bar{\sigma})+ \lambda \vartheta)(\lambda^{2}-\sigma \bar{\sigma}),
\]
and
\[
H_7=4i (F \sigma \xi-\bar{F} \bar{\sigma} \bar{\xi})(\lambda^{2}-\sigma \bar{\sigma}).
\]
\end{Prop}
\begin{proof}
The proofs of these statements follow from differentiation of the flow equation (\ref{e:floweq}). We illustrate this for the flow of
$\sigma$, leaving the flow of $\rho$ to the reader.

We start by splitting the expression into convenient terms:
\[
-\dot{\bar{\sigma}}=\bar{\partial}\dot{F}=E_1+E_2+E_3+E_4,
\]
where $E_1$ is second order in the derivatives of $\lambda$ and $\sigma$, $E_2$ and $E_3$ are the quadratic and linear first order terms, and $E_4$ is the zeroth 
order terms. We now compute each of these terms in turn.

So, differentiating equation (\ref{e:floweq}) we have
\[
E_1=\frac{(1+\xi\bar{\xi})^2}{2\Delta}\left(-2\bar{\sigma}\bar{\partial}\partial\lambda-i\bar{\sigma}\bar{\partial}\bar{\partial}\sigma+2\lambda\bar{\partial}\partial\bar{\sigma}+i\sigma\bar{\partial}\bar{\partial}\bar{\sigma}\right).
\]

At this point we exploit the 3-jet identity (\ref{e:id2}) which we write in the more favorable form
\[
\partial\bar{\partial}\lambda={\mathbb I}{\mbox m}\;\left[\bar{\partial}\bar{\partial}\sigma-\frac{2\xi}{1+\xi\bar{\xi}}\bar{\partial}\sigma
+\frac{2\xi^2}{(1+\xi\bar{\xi})^2}\sigma\right]-\frac{2\lambda}{(1+\xi\bar{\xi})^2}.
\]
Inserting this in the expression for $E_1$ yields
\[
E_1=\frac{(1+\xi\bar{\xi})^2}{2\Delta}\left(-i\bar{\sigma}\partial\partial\bar{\sigma}+2\lambda\partial\bar{\partial}\bar{\sigma}+i\sigma\bar{\partial}\bar{\partial}\bar{\sigma}
-\frac{2i\bar{\sigma}(\xi\bar{\partial}\sigma-\bar{\xi}\partial\bar{\sigma})}{1+\xi\bar{\xi}}
+\frac{2i\bar{\sigma}(\xi^2\sigma-\bar{\xi}^2\bar{\sigma}-2i\lambda)}{(1+\xi\bar{\xi})^2}
\right).
\]
The first three terms of this, noting the expression for $g^{-1}$ in Proposition \ref{p:ts2indmet}, are easily seen to be the 
rough Laplacian of  $-\bar{\sigma}$:  
\[
E_1=-g^{jk}\partial_j\partial_k\bar{\sigma}+\frac{i\bar{\sigma}}{\Delta}\left(
(\bar{\xi}\partial\bar{\sigma}-\xi\bar{\partial}\sigma)(1+\xi\bar{\xi})
+\xi^2\sigma-\bar{\xi}^2\bar{\sigma}-2i\lambda\right).
\]
We note that in the final sets of expressions, the lower order terms introduced into $E_1$ by the 3-jet identity will have to be added to $E_3$ 
and $E_4$.

Moving to the quadratic first order term, differentiating equation (\ref{e:floweq}) we compute that
\begin{align}
E_2=&\frac{(1+\xi\bar{\xi})^2}{2\Delta}\Big(-2\bar{\partial}\bar{\sigma}\partial\lambda-i\bar{\partial}\bar{\sigma}\bar{\partial}\sigma+2\bar{\partial}\lambda\partial\bar{\sigma}+i\bar{\partial}\sigma\bar{\partial}\bar{\sigma}\nonumber\\
&\qquad\qquad\qquad\qquad
-\frac{1}{\Delta}\bar{\partial}\Delta(-2\bar{\sigma}\partial\lambda-i\bar{\sigma}\bar{\partial}\sigma+2\lambda\partial\bar{\sigma}+i\sigma\bar{\partial}\bar{\sigma})\Big)\nonumber\\
&=\frac{(1+\xi\bar{\xi})^2}{2\Delta^2}\Big(
4\lambda\bar{\sigma}\bar{\partial}\lambda\partial\lambda
+2i\lambda\bar{\sigma}\bar{\partial}\lambda\bar{\partial}\sigma
-2(\lambda^2+\sigma\bar{\sigma})\bar{\partial}\lambda\partial\bar{\sigma}
-2i\lambda\sigma\bar{\partial}\lambda\bar{\partial}\bar{\sigma}\nonumber\\
&\qquad\qquad\qquad\qquad
-2\lambda^2\bar{\partial}\bar{\sigma}\partial\lambda
+2\lambda\sigma\bar{\partial}\bar{\sigma}\partial\bar{\sigma}
+i\sigma^2(\bar{\partial}\bar{\sigma})^2-i\bar{\sigma}^2(\bar{\partial}\sigma)^2\nonumber\\
&\qquad\qquad\qquad\qquad\qquad\qquad\qquad\qquad
-2\bar{\sigma}^2\bar{\partial}\sigma\partial\lambda
+2\lambda\bar{\sigma}\bar{\partial}\sigma\partial\bar{\sigma}\Big)\nonumber\\
&=-\frac{(1+\xi\bar{\xi})^2}{2\Delta^2}\bar{H}_1\nonumber.
\end{align}
This establishes the quadratic first order term, once we recall that $\bar{\partial}\dot{F}=-\dot{\bar{\sigma}}$. 

Moving to the linear first order term
\begin{align}
E_3=&\frac{(1+\xi\bar{\xi})\xi}{\Delta}\left(-2\bar{\sigma}\partial\lambda-i\bar{\sigma}\bar{\partial}\sigma+2\lambda\partial\bar{\sigma}+i\sigma\bar{\partial}\bar{\sigma}\right)\nonumber\\
&\qquad\qquad
+\frac{1+\xi\bar{\xi}}{\Delta}\left(2i(\sigma\xi+i\lambda\bar{\xi})\bar{\partial}\bar{\sigma}+2i\bar{\sigma}(\xi\bar{\partial}\sigma+i\bar{\xi}\bar{\partial}\lambda)\right)\nonumber\\
&\qquad\qquad\qquad
-\frac{1+\xi\bar{\xi}}{\Delta^2}\left[2i\bar{\sigma}(\sigma\xi+i\lambda\bar{\xi})(2\lambda\bar{\partial}-\sigma\bar{\partial}\bar{\sigma}-\bar{\sigma}\bar{\partial}\sigma)\right]\nonumber\\
=&\frac{1+\xi\bar{\xi}}{\Delta^2}\Big\{-2\xi\bar{\sigma}(\lambda^2-\sigma\bar{\sigma})\partial\lambda+
[2\bar{\sigma}^2(i\sigma\xi-\lambda\bar{\xi})+i\xi\bar{\sigma}(\lambda^2-\sigma\bar{\sigma})]\bar{\partial}\sigma\nonumber\\
&\qquad\qquad\qquad
+2\xi\lambda(\lambda^2-\sigma\bar{\sigma})\partial\bar{\sigma}+[i\xi\sigma(3\lambda^2-\sigma\bar{\sigma})-2\bar{\xi}\lambda^3]\bar{\partial}\bar{\sigma}
\nonumber\\
&\qquad\qquad\qquad
+2[\bar{\xi}\bar{\sigma}(\lambda^2+\sigma\bar{\sigma})-2i\xi\lambda\sigma\bar{\sigma}]\bar{\partial}\lambda\Big\}\nonumber.
\end{align}
Adding the linear term from $E_1$ we compute that
\begin{align}
-\bar{H}_3=&\frac{\Delta^2}{1+\xi\bar{\xi}}E_3+i\bar{\xi}\bar{\sigma}(\lambda^2-\sigma\bar{\sigma})\partial\bar{\sigma}
  -i\xi\bar{\sigma}(\lambda^2-\sigma\bar{\sigma})\bar{\partial}\sigma\nonumber\\
=&\frac{1+\xi\bar{\xi}}{\Delta^2}\Big\{-2\xi\bar{\sigma}(\lambda^2-\sigma\bar{\sigma})\partial\lambda+
2\bar{\sigma}^2(i\sigma\xi-\lambda\bar{\xi})\bar{\partial}\sigma\nonumber\\
&\qquad\qquad\qquad
+(\lambda^2-\sigma\bar{\sigma})(2\xi\lambda+i\bar{\xi}\bar{\sigma})\partial\bar{\sigma}+[i\xi\sigma(3\lambda^2-\sigma\bar{\sigma})-2\bar{\xi}\lambda^3]\bar{\partial}\bar{\sigma}
\nonumber\\
&\qquad\qquad\qquad
+2[\bar{\xi}\bar{\sigma}(\lambda^2+\sigma\bar{\sigma})-2i\xi\lambda\sigma\bar{\sigma}]\bar{\partial}\lambda\Big\}\nonumber,
\end{align}
as claimed in the Proposition.

Finally, we work out the zero order term by looking again at the derivative of equation (\ref{e:floweq}) :
\[
E_4=\frac{2i\bar{\sigma}}{\Delta}\Big(\sigma\xi^2+i\lambda(1+2\xi\bar{\xi})\Big),
\]
and taking into account the zero order term of $E_1$, we have
\[
-\bar{H}_4=\Delta^2\;E_4+i\bar{\sigma}(\xi^2\sigma-\bar{\xi}^2\bar{\sigma}-2i\lambda)\Delta
   =\bar{\sigma}(3i\xi^2\sigma-i\bar{\xi}^2\bar{\sigma}-4\xi\bar{\xi}\lambda)\Delta,
 \]
as claimed.
\end{proof}

\vspace{0.1in}

\begin{Cor}\label{c:flowdet}
Under the mean curvature flow the determinant of the induced metric evolves by:
\[
\left(\frac{\partial }{\partial s}-{\mathbb G}^{jk}\partial_j\partial_k\right)\Delta=\frac{H_8(1+\xi\bar{\xi})^2}
{2\Delta^2}+\frac{H_9(1+\xi\bar{\xi})+H_{10}}{\Delta}-4\lambda,
\]
where
\begin{align}
H_8&=2{\mathbb R}{\mbox{e}}[2i(\sigma \bar{\sigma}-3 \lambda^{2}) \bar{\sigma} \left(\partial \lambda\right)^{2} 
+2\lambda(\lambda^{2}+\sigma \bar{\sigma}) \partial \lambda \bar{\partial} \lambda 
+4i \lambda \bar{\sigma}^{2} \partial \lambda \partial \sigma  \nonumber\\
&\qquad-4\sigma \bar{\sigma}^{2} \partial \lambda \bar{\partial} \sigma
+4i \lambda^{3} \partial \lambda \partial \bar{\sigma} 
-4\lambda^{2} \sigma \partial \lambda \bar{\partial} \bar{\sigma}
-i \bar{\sigma}^{3}\left(\partial \sigma\right)^{2} 
+2\lambda \bar{\sigma}^{2} \partial \sigma \bar{\partial} \sigma  \nonumber\\
&\qquad -2i \sigma \bar{\sigma}^{2} \partial \sigma \partial \bar{\sigma} 
+\lambda \sigma \bar{\sigma} \partial \sigma \bar{\partial} \bar{\sigma} 
+i (2\lambda^{2}-\sigma \bar{\sigma}) \bar{\sigma} \left(\bar{\partial} \sigma\right)^{2}
-2\lambda^{3} \bar{\partial} \sigma \partial \bar{\sigma} 
+3\lambda \sigma \bar{\sigma} \bar{\partial} \sigma \partial \bar{\sigma}], \nonumber
\end{align}
\[
H_9={\mathbb R}{\mbox{e}}\left[(-2i\lambda\bar{\xi}-4\sigma\xi)\bar{\sigma}\partial\lambda
    -i\bar{\sigma}^2\bar{\xi}\partial\sigma+(-3i\sigma\xi+4\lambda\bar{\xi})\bar{\sigma}\bar{\partial}\sigma\right]/2,
\]
and
\[
H_{10}=2{\mathbb R}{\mbox{e}}\;\sigma\bar{\sigma}\left[i(\sigma\xi^2-\bar{\sigma}\bar{\xi}^2)-2\lambda\xi\bar{\xi}\right].
\]

\end{Cor}
\begin{proof}
This follows from the previous evolution equations and the definition $\Delta=\lambda^2-|\sigma|^2$.
\end{proof}
\vspace{0.1in}

\begin{Prop}\label{p:chiflow}
Under mean curvature flow the perpendicular distance function $\chi$ (see Definition \ref{d:perpdist}) evolves by
\begin{align}
\left(\frac{\partial }{\partial s}-{\mathbb G}^{jk}\partial_j\partial_k\right)\chi^2&
  =4\left[i\bar{F}^2\bar{\sigma}-2F\bar{F}\lambda-iF^2\sigma\right.\nonumber\\
&\qquad\qquad
   +[i\sigma\xi(\bar{F}\bar{\sigma}-F\bar{\rho})-i\bar{\sigma}\bar{\xi}(F\sigma-\bar{F}\rho)](1+\xi\bar{\xi})\nonumber\\
&\qquad\qquad\qquad+\left.\lambda(\rho\bar{\rho}-\sigma\bar{\sigma})(1+\xi\bar{\xi})^2\right]/[(1+\xi\bar{\xi})^2(\lambda^2-\sigma\bar{\sigma})].\nonumber
\end{align}
\end{Prop}
\begin{proof}
This follows from differentiating the expression for $\chi(\xi,\bar{\xi},F,\bar{F})$ and using the equations for the flow of $F$.
\end{proof}
\vspace{0.1in}

\begin{Prop}
Under mean curvature flow the weighted shear evolves by
\begin{align}
\left(\frac{\partial }{\partial s}-{\mathbb G}^{jk}\partial_j\partial_k\right)\left(\frac{|\sigma|^2}{\lambda^2-|\sigma|^2}\right)&=-2\frac{(\lambda^2+|\sigma|^2)}{(\lambda^2-|\sigma|^2)^3}
    \Big\|\lambda d|\sigma|-|\sigma|d\lambda\Big\|^2
     -2\frac{\lambda^2|\sigma|^2}{(\lambda^2-|\sigma|^2)^2}
    \Big\|d\phi\Big\|^2\nonumber\\
&\qquad +\frac{ 4\lambda|\sigma| }{(\lambda^{2}-|\sigma|^2)^3} \;  {\mathbb R}{\mbox{e}}\;[H_{11}]  \label{e:holconv},
\end{align}
where
\begin{align}
H_{11}=&(1+ \xi \bar{\xi})\left[i\lambda\bar{\sigma} \bar{\xi}\partial|\sigma|-i|\sigma|\bar{\sigma} \bar{\xi}\partial \lambda
+2\lambda|\sigma|( i \lambda \xi- \bar{\sigma} \bar{\xi}) \partial\phi\right]\nonumber\\
&\qquad -2 i\lambda \sigma|\sigma| \xi^{2}-2|\sigma|^{3}+2( 1+2 \xi \bar{\xi})\lambda^{2}|\sigma|  \nonumber,
\end{align}
and $\phi$ is the argument of $\sigma$. Here the norm $\|.\|$ is taken with respect to the induced metric given in Proposition \ref{p:ts2indmet}.
\end{Prop}
\begin{proof}
For the sake of brevity, introduce the heat operator ${\mathfrak P}$:
\[
{\mathfrak P}=\frac{\partial }{\partial s}-{\mathbb G}^{jk}\partial_j\partial_k
\]
Then the result follows from the fact that
\begin{align}
{\mathfrak P}\left(\frac{|\sigma|^2}{\lambda^2-|\sigma|^2}\right)&=\frac{\lambda}{(\lambda^2-|\sigma|^2)^2}\left[\lambda\sigma{\mathfrak P}(\bar{\sigma})
   +\lambda\bar{\sigma}{\mathfrak P}(\sigma)
   -2|\sigma|^2{\mathfrak P}(\lambda)\right]\nonumber\\
&\qquad -\frac{2|\sigma|^2(3\lambda^2+  |\sigma|^2)}{(\lambda-|\sigma|^2)^3}\|d\lambda\|^2
     -\frac{2\lambda^2(\lambda^2+  |\sigma|^2)}{(\lambda-|\sigma|^2)^3}<<d\sigma,d\bar{\sigma}>>\nonumber\\
&\qquad  +\frac{4\lambda(\lambda^2+|\sigma|^2)}{(\lambda-|\sigma|^2)^3}<<\sigma d\bar{\sigma}+\bar{\sigma}d\sigma,d\lambda>> 
   -\frac{2\lambda^2\sigma^2}{(\lambda-|\sigma|^2)^2}\|d\bar{\sigma}\|^2\nonumber\\
&\qquad -\frac{2\lambda^2\bar{\sigma}^2}{(\lambda-|\sigma|^2)^2}\|d\sigma\|^2\nonumber,
\end{align}
and the flow equations given in Proposition \ref{p:shearflow}, recalling that $\lambda={\mathbb I}{\mbox{m}}\;\rho$. 
\end{proof}

\vspace{0.1in}

\subsection{Boundary conditions}\label{s:bdryinit}

In our case we would like the boundary plane to be the totally real Lagrangian hemisphere ${\mathcal P}$, but, as the metric will be Lorentz or degenerate on such a plane (see Proposition \ref{p:indmet}), a Lagrangian surface can never be positive and so cannot be used as a boundary condition. Instead we perturb the plane to make it positive, and attach the initial disc to this perturbed plane.

More specifically, suppose ${\mathcal P}$ is given by $\eta=F(\xi,\bar{\xi})$.
Define the perturbed plane $\tilde{\mathcal P}_{C_0}$ by adding a {\it linear holomorphic twist}: 
\begin{equation}\label{e:addtwist}
\eta=\tilde{F}=F-iC_0\xi,
\end{equation}
where $C_0$ is a real positive constant.

\begin{Prop}\label{p:linholtwist}
For any closed $K\subset\{|\xi|<1\}$, there exists $C_0>0$ such that $\tilde{\mathcal P}_{C}$ is positive on $K$ for all $C>C_0$ .
As we make $C_0$ large, the positive area containing $\xi=0$ becomes bigger, and tends to an open hemisphere as $C_0\rightarrow\infty$.

\end{Prop}
\begin{proof}
Since the deformation is holomorphic, the shear remains the same: $\tilde{\sigma}=\sigma$ and, computing the twist of $\tilde{\sigma}$
\[
\tilde{\lambda}={\mathbb I}{\mbox m}\;(1+\xi\bar{\xi})^2\partial\left[\frac{\tilde{F}}{(1+\xi\bar{\xi})^2}\right]
={\mathbb I}{\mbox m}\;(1+\xi\bar{\xi})^2\partial\left[\frac{F-iC_0\xi}{(1+\xi\bar{\xi})^2}\right]=-C_0\frac{1-\xi\bar{\xi}}{1+\xi\bar{\xi}},
\]
where we have used the fact that ${\mathcal P}$ is Lagrangian. For $C_0>|\tilde{\sigma}(0)|$ 
\[
\left.\left(\tilde{\lambda}^2-|\tilde{\sigma}|^2\right)\right|_0=C_0^2-|\tilde{\sigma}(0)|^2>0,
\]
i.e. the metric at $0$ is positive definite.
\end{proof}

\vspace{0.1in}

\begin{Def}
Fix $C_0>|\tilde{\sigma}(0)|$ and denote the set on which the induced metric is positive by
\[
\tilde{\Lambda}_{C_0}'=\left\{\;\gamma\in\tilde{\mathcal P}\;\;\left|\;\; |\tilde{\sigma}(\gamma)|<|\tilde{\lambda}(\gamma)|\;\right.\right\}.
\] 
Clearly $\tilde{\Lambda}_{C_0}'$ is non-empty since it contains $\gamma_0$. Denote the connected component of $\gamma_0$ in $\tilde{\Lambda}_{C_0}'$ 
by $\tilde{\Lambda}_{C_0}$. 
\end{Def}

\vspace{0.1in}

\begin{Note}
In order for the induced metric to be positive (rather than negative) definite we have arranged that $\tilde{\lambda}<0$ - see Proposition \ref{p:ts2indmet}.
\end{Note}
\vspace{0.1in}


\section{{ Existence Results for the I.B.V.P.}}\label{s:ibvp}

In this section we establish sufficient conditions, namely smallness of the initial angle and aholomorphicity along the edge, for which
long-time existence of the {\bf I.B.V.P.} holds. We then prove the existence of a holomorphic disc with edge lying on a totally real
Lagrangian hemisphere.

\subsection{Short-time existence}\label{ss:ste}

Short-time existence of quasilinear parabolic equations of various types can be established by comparison with Schauder theory for linear equations.  
In what follows we prove this fact for the {\bf I.B.V.P.} 

Consider then a 1-parameter family of maps over a fixed domain $f_s:D\rightarrow TS^2$ which has local coordinate expression $\nu\mapsto(\xi(s,\nu,\bar{\nu}),\eta(s,\nu,\bar{\nu}))$. Let $\tilde{\mathcal P}$ be a positive plane in $TS^2$ which is given as a graph 
$\eta=\tilde{F}(\xi,\bar{\xi})$ and $D_0$ some initial positive surface with edge lying in $\tilde{\mathcal P}$.

We can now establish short-time existence for the {\bf I.B.V.P.}:

\begin{Thm}\label{t:ste}
Let $f_0:D\rightarrow TS^2$ be a smooth positive section whose edge lies in a fixed positive section $\tilde{\sigma}$. Assume, in addition, that 
$\tilde{\sigma}$ is totally real along the edge $f_0(\partial D)$.
 
Then there exists a unique family of positive sections $f_s(D)$ with
\[
f_s\in C^{2+\alpha}(\overline{D}\times[0,s_0)),
\]
satisfying the {\bf I.B.V.P.} on an interval $0\leq s< s_0$. 
\end{Thm}

\vspace{0.1in}

\begin{proof}
We are dealing with a parabolic system with mixed non-linear boundary conditions. Short time existence for mean curvature flow in higher codimension can be found in \cite{smock} (Proposition 3.2) and references therein, while the codimension one boundary case can be found in Theorem 1.3 of \cite{stahl}. We will follow \cite{Freire} and verify that the Lopatinskii-Shapiro conditions hold for our boundary system.  

Following section I.2.3 in \cite {Eid}, first consider the linearization of the system at $f=(\xi,\eta)$, namely:
\[
\left(\frac{\partial }{\partial s}-a_{\alpha\beta}(\nabla f)D^\alpha D^\beta\right)\hat{f}=g,
\]
where the linearized initial and boundary conditions for $\hat{f}=(\hat{\xi},\hat{\eta})$ are
\begin{enumerate}
\item[(i)]  $\hat{f}(s=0)=f_0$,
\item[(ii)] $\hat{\eta}=\delta\tilde{F}$,
\item[(iii)] $4{\mathbb I}m(\bar{\beta}\partial_v\hat{\xi}+\alpha\partial_v\bar{\hat{\eta}})=0$,
\item[(iv)] ${\mathbb R}e [Ce^{-i\phi}(\partial_u\hat{\xi}\partial_v\eta-\partial_v\hat{\xi}\partial_u\eta
    +\partial_u\xi\partial_v\hat{\eta}-\partial_v\xi\partial_u\hat{\eta})]=0$,
\end{enumerate}
and, for brevity, we have introduced
\begin{equation}\label{e:def1}
Ce^{i\phi}=\partial_u\xi\partial_v\eta-\partial_v\xi\partial_u\eta.
\end{equation}

To show that this is a parabolic system we must check that the Lopatinskii-Shapiro conditions (equations (2.18) to (2.20) of \cite{Eid}) hold. We do this as follows.

Fix a point $p\in\partial D$ and, by an isometry of $TS^2$, set $\xi(f(p))=\eta(f(p))=0$. We retain the 
freedom to rotate $(\xi,\eta)\rightarrow (e^{i\theta}\xi,e^{-2i\theta}\eta)$, which we will implement shortly.

In addition, let us choose a parameterization $\nu=u+iv$ of the domain $D$ such that at the point $p$, the edge is given by $v=0$, and 
\[
\mathring{e}_{(1)}=f_*\left(\frac{\partial}{\partial u}\right),
\qquad\qquad
\mathring{e}_{(2)}=f_*\left(\frac{\partial}{\partial v}\right),
\]
form an orthonormal basis for $T_{f(p)}f(D)$ and
\[
\triangle_p=\left.\frac{\partial^2}{\partial u^2}\right|_p+\left.\frac{\partial^2}{\partial v^2}\right|_p.
\]
Note that orthonormality of the frame at $p$ implies that
\begin{equation}\label{e:onf1}
{\textstyle{\frac{1}{2i}}}(\partial_u\xi\partial_u\bar{\eta}-\partial_u\bar{\xi}\partial_u\eta)=1,
\end{equation}
\begin{equation}\label{e:onf2}
{\textstyle{\frac{1}{2i}}}(\partial_v\xi\partial_v\bar{\eta}-\partial_v\bar{\xi}\partial_v\eta)=1,
\end{equation}
\begin{equation}\label{e:onf3}
\partial_u\xi\partial_v\bar{\eta}-\partial_u\bar{\xi}\partial_v\eta-\partial_v\bar{\xi}\partial_u\eta+\partial_v\xi\partial_u\bar{\eta}=0.
\end{equation}
An orthonormal frame for $T_{f(p)}\tilde{\mathcal P}$ is given by
\[
\tilde{e}_1=e_1,
\qquad\qquad
\tilde{e}_2=2{\mathbb{R}}e\left(\alpha\frac{\partial}{\partial \xi}+\beta\frac{\partial}{\partial \eta}\right),
\]
for some $\alpha,\beta\in{\mathbb C}$ where positivity of $\tilde{\mathcal P}$ implies that $\beta\neq 0$. The fixed angle condition is 
\[
{\mathbb G}(e_2,\tilde{e}_2)=4{\mathbb I}m(\bar{\beta}\partial_v\xi+\alpha\partial_v\bar{\eta}) =\cosh B.
\]
On the other hand, the asymptotic holomorphicity boundary condition is
\[
|\partial_u\xi\partial_v\eta-\partial_v\xi\partial_u\eta|^2=C^2(1+s)^{-2}.
\]

Take the Fourier transform in the variable $u$ and the Laplace transform in the variable $s$. If $w$ and $t$ are the transformed variables,
respectively, we get the transformed ODE's
\[
\frac{\partial^2 \hat{\xi}}{\partial v^2}-(t+w^2)\hat{\xi}=0,
\qquad\qquad
\frac{\partial^2 \hat{\eta}}{\partial v^2}-(t+w^2)\hat{\eta}=0.
\]
We must now verify that these equations have a solution that decay for $v\rightarrow-\infty$ and satisfy the transformed boundary conditions
\begin{enumerate}
\item[(ii)]  $\hat{\eta}(0)=\delta\tilde{F}$,
\item[(iii)] $4{\mathbb I}m(\bar{\beta}\partial_v\hat{\xi}+\alpha\partial_v\bar{\hat{\eta}})(0)=0$,
\item[(iv)]  
\begin{align}
&Ce^{-i\phi}(iw\hat{\xi}\partial_v\eta-\partial_v\hat{\xi}\partial_u\eta+\partial_u\xi\partial_v\hat{\eta}-iw\partial_v\xi\hat{\eta})(0)\nonumber\\
&\qquad+Ce^{i\phi}(iw\bar{\hat{\xi}}\partial_v\bar{\eta}-\partial_v\bar{\hat{\xi}}\partial_u\bar{\eta}
           +\partial_u\bar{\xi}\partial_v\bar{\hat{\eta}}-iw\partial_v\bar{\xi}\bar{\hat{\eta}})(0)=0.\nonumber
\end{align}
\end{enumerate}
To do this, first solve the ODE's
\[
\hat{\xi}=\hat{\xi}(0)\exp(v\sqrt{t+w^2}),
\qquad\qquad
\hat{\eta}=\hat{\eta}(0)\exp(v\sqrt{t+w^2}),
\]
and then substituting this in the boundary conditions (i) to (iii), we are led to the following linear system for 
$\hat{U}=[\hat{\xi}(0),\bar{\hat{\xi}}(0),\hat{\eta}(0),\bar{\hat{\eta}}(0)]^T$:
\[
\hat{M}\hat{U}=\hat{V},
\]
where
\[
\hat{M}=\left[
\begin{array}{cccc}
0&0&1&0 \\
0&0&0&1 \\
\bar{\beta}&-\beta&-\bar{\alpha}&\alpha \\
Ce^{-i\phi}(iw\partial_v\eta-\sqrt{t+w^2}\partial_u\eta)&Ce^{i\phi}(iw\partial_v\bar{\eta}-\sqrt{t+w^2}\partial_u\bar{\eta})& * & * \\
\end{array}
\right].
\]
Thus the linearized system has a unique solution iff the determinant of this matrix is non-zero. Now
\[
|\hat{M}|=\bar{\beta}Ce^{i\phi}(iw\partial_v\bar{\eta}-\sqrt{t+w^2}\partial_u\bar{\eta})
      +\beta Ce^{-i\phi}(iw\partial_v\eta-\sqrt{t+w^2}\partial_u\eta).
\]
Clearly a necessary condition for this expression to be non-zero is that $C\neq0$. However, this is true by our asymptotic holomorphicity
condition, so long as it is true initially. Thus the edge of the initial disc must be totally real - which we now assume. 

Motivated by this expression, let us now exhaust our coordinate freedom by using the rotation $(\xi,\eta)\rightarrow (e^{i\theta}\xi,e^{-2i\theta}\eta)$
to set (recalling definition (\ref{e:def1}))
\[
\bar{\beta}Ce^{i\phi}\partial_v\bar{\eta}=-\beta Ce^{-i\phi}\partial_v\eta.
\]
Then
\begin{align}
|\hat{M}|=&-\sqrt{t+w^2}(\bar{\beta}Ce^{i\phi}\partial_u\bar{\eta} +\beta Ce^{-i\phi}\partial_u\eta) \nonumber\\
&=\frac{\sqrt{t+w^2}}{\partial_v\bar{\eta}}\beta Ce^{-i\phi}(\partial_v\eta\partial_u\bar{\eta}-\partial_u\eta\partial_v\bar{\eta}).\nonumber
\end{align}
This is well-defined since, by equation (\ref{e:onf2}) we have $\partial_v\eta\neq 0$. 

Now, for the sake of contradiction, suppose that $|\hat{M}|=0$. Then from the above expression and the fact that $\beta\neq 0$ and $C\neq 0$, 
we see that this implies that $\partial_u\eta=\lambda\partial_v\eta$ for some non-zero $\lambda\in{\mathbb R}$. Substituting this in equations (\ref{e:onf1}) and (\ref{e:onf3}) we find that
\begin{equation}\label{e:onf4}
{\textstyle{\frac{1}{2i}}}(\partial_u\xi\partial_v\bar{\eta}-\partial_u\bar{\xi}\partial_v\eta)={\textstyle{\frac{1}{\lambda}}},
\end{equation}
\begin{equation}\label{e:onf5}
(\partial_u\xi+\lambda\partial_v\xi)\partial_v\bar{\eta}-(\partial_u\bar{\xi}+\lambda\partial_v\bar{\xi})\partial_v\eta=0.
\end{equation}
Now adding equation (\ref{e:onf4}) to $\lambda$ times equation (\ref{e:onf5}) yields
\[
{\textstyle{\frac{1}{2i}}}[(\partial_u\xi+\lambda\partial_v\xi)\partial_v\bar{\eta}-(\partial_u\bar{\xi}+\lambda\partial_v\bar{\xi})\partial_v\eta]=
   \lambda+{\textstyle{\frac{1}{\lambda}}}.
\]
Comparing this with (\ref{e:onf5}) we find that $\lambda+\lambda^{-1}=0$ which is impossible. We conclude that $|\hat{M}|\neq0$, and hence the 
Lopatinskii-Shapiro conditions hold. Thus the  boundary conditions (ii) to (iv) of the system {\bf I.B.V.P} satisfy the complementarity condition, and therefore, coupled with Cauchy initial data (i), is strongly parabolic \cite{Eid}. 

The proof of short-time existence now proceeds as in \cite{Freire}, which we now outline. Consider the set $Q_T=\overline{D}\times[0,T]$ and the space 
\[
B_R^T=\{f\in C^{2+\alpha}(Q_T, {\mathbb R}^4)\;|\; f(s=0)=f_0,\; \|f-f_0\|_{1+\alpha}\leq R  \}.
\]
Given $f\in B_R^T$, we can solve the linearized initial boundary value problem in the beginning of the proof to obtain $\hat{f}$, 
which by strict parabolicity and Theorem VI.21 of \cite{Eid} satisfies the following a priori estimate:
\[
\|\hat{f}\|_{C^{2+\alpha}(Q_T)}\leq c\left(\|g\|_{C^{\alpha}(Q_T)}
+\|f_0\|_{C^{\alpha}(D)}+\sum_q \|h^q\|_{C^{2+\alpha-r_q}(\partial D\times[0,T])}   \right),
\]
\noindent
where $g(x,t)$ depends on the induced metric and ambient Christoffel symbols at $f$ literally as in (5.1) of \cite{LaS}, and $h$ on the boundary data at $f$ as they appear in (ii) - (iv) above. 

The  pseudo-linearization of the system at $f_0$, for fixed ${f}$ is
\[
\frac{\partial \hat{f}}{\partial s}-\triangle_{df_0}\hat{f}=\triangle_{d{f}}{f}-\triangle_{df_0}{f},
\]
where $\triangle_{d\tilde{f}}$ is the mean curvature operator with  symbol linearized at $\tilde{f}$. 

Note that a fixed point of the map ${\mathcal G}$ taking ${f}$ to a solution $\hat{f}$ of the pseudo-linearized equations is a solution of the original quasilinear system.

\begin{Lem}\cite{Freire}
There exists an $R$ and $T$ depending on $f_0$ such that ${\mathcal G}: B_R^T\rightarrow B_R^T$.
\end{Lem}
\begin{proof} 
By the above regularity estimate, we have a uniform $C^{2+\alpha}$ - bound for every $ {\mathcal G}({f})$ for all ${f} \in B^T_R $.  As a result, we may bound the $C^{1+\alpha}$ - norm of $\hat{f}$ by a $c_1 T^\epsilon$, for some $\epsilon>0$. Therefore, choosing $T$ sufficiently small, the claim follows.
\end{proof}

Short-time existence now follows from an application of the Schauder fixed point theorem, since, $B_R^T$ is compact and convex in the $C^{1+\alpha} $ - norm, and for sufficiently small $T$, ${\mathcal G}$ is a contraction.
\end{proof}

\vspace{0.1in}

\subsection{Long-time existence}\label{ss:lte}

Let ${\mathcal P}$ be a totally real Lagrangian section of $TS^2$ that projects to a hemisphere and $\gamma_0\in {\mathcal P}$ projects to the pole. By an isometry of $TS^2$ we can choose $\gamma_0$ to have Gauss coordinate $\xi=0$. Assume ${\mathcal P}$ is totally real, that is, it is nowhere holomorphic: $|\sigma|\neq0$.

Let $F_0$ be the graph associated with ${\mathcal P}$ and add a holomorphic twist (as in Proposition \ref{p:linholtwist}):
\[
\tilde{F}=F_0-iC_0\xi,
\]
to form the section $\tilde{\mathcal P}$. While the aholomorphicity remains unchanged ($|\tilde{\sigma}|=|\sigma|$), $\tilde{\mathcal P}$ is not Lagrangian in the open unit hemisphere, since
\[
\tilde{\lambda}=-C_0\frac{1-R^2}{1+R^2},
\]
where $R=|\xi|$. Fix $C_0>|\tilde{\sigma}(0)|$ and, as before, denote the set on which the induced metric is positive by
\[
\tilde{\Lambda}_{C_0}'=\left\{\;\gamma\in\tilde{\mathcal P}\;\;\left|\;\; |\tilde{\sigma}(\gamma)|<|\tilde{\lambda}(\gamma)|\;\right.\right\}.
\] 
Clearly $\tilde{\Lambda}_{C_0}'$ is non-empty since it contains $0$. Denote the connected component of $0$ in $\tilde{\Lambda}_{C_0}'$ by $\tilde{\Lambda}_{C_0}$. Thus, $\tilde{\Lambda}_{C_0}$ is a positive section over an open subset in the unit hemisphere.

\begin{center}
\includegraphics[width=98mm]{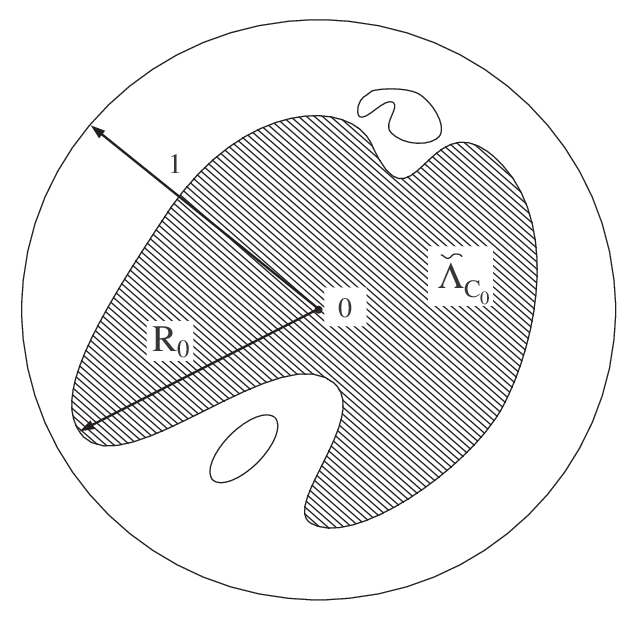}
\end{center}

In what follows we take the sup and inf of $|\tilde{\sigma}|$ over the hemisphere, while for other quantities, such as $|\tilde{\lambda}|$, sup and inf will be over the positive region $\tilde{\Lambda}_{C_0}$. Denote the maximum Gauss radius of the positive region by
\[
R_0=\sup_{\tilde{\Lambda}_{C_0}} |\xi|.
\]
The following diagram illustrates these definitions.

Note that $0<R_0<1$ and 
\[
\sup_{\tilde{\Lambda}_{C_0}}|\tilde{\lambda}|=C_0,
\qquad\qquad
\inf_{\tilde{\Lambda}_{C_0}}|\tilde{\lambda}|=C_0\frac{1-R_0^2}{1+R_0^2}.
\]

In this section we prove the following:

\begin{Thm}\label{t:lte}
Let ${\mathcal P}$,  $C_0$, $\tilde{\mathcal P}$ and $R_0$ be as above. Then there exist constants $B_1$ and $C_1$ s.t. $\forall B,C$ satisfying
\[
0<B<B_1(C_0,R_0,\sup|\tilde{\sigma}|,\inf|\tilde{\sigma}|)
\qquad\qquad
0<C<C_1(C_0,R_0,\sup|\tilde{\sigma}|,\inf|\tilde{\sigma}|),
\]
the {\bf I.B.V.P.} with initial constants $B$ and $C$ has a solution for all time.
\end{Thm}
\begin{proof}
In order to prove long-time existence for the {\bf I.B.V.P.} we show that it is uniformly parabolic and, since it is quasilinear,
we need a global gradient estimate. 

In Theorem 1 of \cite{Gk02}, long-time existence for compact flowing submanifolds is proved, so long as the ambient space satisfies the timelike curvature condition and the submanifold stays in a compact region of the ambient space. The proof was based on the maximum principle to bound the gradient and the mean curvature. These arguments extend to the case where these functions have an interior maximum and so, to prove this theorem, we extend the estimates to the edge.

More specifically, we ensure that the timelike curvature condition holds for the flow and that the surface stays in a compact region of $TS^2$. This we do in Propositions \ref{p:tcc} and \ref{p:compact} below, respectively.

We also find estimates for the gradient and mean curvature vector at the edge. The first of these is established by ensuring that the edge stays in a positive region of the boundary plane (Proposition \ref{p:posregion}) and the first derivative remains bounded (Proposition \ref{p:bdryest1}). To establish the latter, we first control the angles which the edge makes with the canonical orthonormal frame (Proposition \ref{p:anglecontrol}) and then extract bounds on the 2-jet of the flowing surface in terms of the 2-jet of the boundary plane (Theorem \ref{t:bdryest3}). Thus we bound $|H|$ and have established the gradient estimate (see Proposition 7 of \cite{Gk02}).

Once we have the global gradient estimate, the proof proceeds as follows. By the short-time existence Theorem \ref{t:ste} we know that there exists an $s_0$ such that there is a solution to initial boundary value problem with
\[
f\in C^{2+\alpha}(\overline{D}\times[0,s_0)). 
\]
By a modification of the interior regularity results in \cite{thorpe} \cite{white}, the gradient estimate implies that this solution can be extended to
\[
f\in C^{2+\alpha}(\overline D\times[0,s_0]). 
\]
In particular, as in the case of definite metrics, interior regularity can be established by the isometric embedding of the ambient space in a flat pseudo-Euclidean space of higher dimension. 

The existence of embeddings in which only the timelike dimension is increased has long been known - for example a neutral 4-manifold can always be locally embedded in ${\mathbb R}^{2+8}$ according to Theorem 1 of \cite{Friedman}, and then the pseudo-Riemannian heat kernel and monotonicity formula introduced in \cite{thorpe} can be used to establish the stated regularity.

Let $s_{max}$ be the maximum time of existence for the initial boundary problem:
\[ 
s_{max}={\mbox{inf}}_{U\subset\subset D}\{ {\mbox{sup}}\;s \in{\mathbb R}_{\geq0}\;|\;  f\in C^{2+\alpha}(U\times[0,s)) \}.
\]
Assume for the sake of contradiction that $s_{max}<\infty$. That is, there exists $U\subset\subset D$ and a sequence of times 
$\{s_j\}\rightarrow s_{max}$ such that 
\[
\lim_{j\rightarrow\infty}\|f_{s_j}\|_{C^{2+\alpha}(U)}\rightarrow\infty.
\]
Given the a priori gradient estimate, this contradicts the regularity result and so we have that $s_{max}$ is 
infinite.

\end{proof}

\subsubsection{Interior gradient estimate}\label{ss:ige}

We start by showing that the timelike curvature condition holds along the flow and that the flow stays in a compact region. First, recall Definition 1 of \cite{Gk02}:

\vspace{0.1in}
\begin{Def}
The manifold $({\mathbb M},{\mathbb G})$ is said to satisfy the {\it timelike curvature condition} if, for any spacelike $n$-plane ${\mathcal P}$ at a point in ${\mathbb M}$, the Riemann curvature tensor satisfies
\begin{equation}\label{e:tcc}
{\mathbb G}(\overline{R}(X,\tau_i)X,\tau_i)\;\geq k \;{\mathbb G}(X,X),
\end{equation}
for some positive constant $k$, where $\{\tau_i\}_{i=1}^n$ form an orthonormal basis for ${\mathcal P}$ and $X$ is any timelike vector orthogonal to ${\mathcal P}$.
\end{Def}
\vspace{0.1in}

We now prove:
\vspace{0.1in}
\begin{Prop}\label{p:tcc}
The timelike curvature condition (\ref{e:tcc}) holds when ${\mathcal P}$ is the tangent plane of the flowing surface, for as long as the flow exists.
\end{Prop}
\begin{proof}
Given a timelike plane ${\mathcal P}$, chose an adapted orthonormal frame $E_{(\mu)}=\{e_{1)},e_{(2)},f_{(1)},f_{(2)}\}$ and for a timelike vector $X=X^{(1)}f_{(1)}+X^{(2)}f_{(2)}$ compute
\[
<\overline{R}(X,e_{(a)})X,e_{(a)}>=\frac{2|\sigma|}{\lambda^2-|\sigma|^2}(-(X^{(1)})^2+(X^{(2)})^2)\geq\frac{2|\sigma|}{\lambda^2-|\sigma|^2}{\mathbb G}(X,X).
\]
From Proposition \ref{p:shearest} we have that
\[
\left(\frac{\partial }{\partial s}-{\mathbb G}^{jk}\partial_j\partial_k\right)\left(\frac{|\sigma|^2}{\lambda^2-|\sigma|^2}\right)\leq0,
\]
and so we have the interior a priori bound
\[
\frac{|\sigma|^2}{\lambda^2-|\sigma|^2}\leq C_1.
\]
In addition, we claim that under the flow
\[
\Delta=\lambda^2-|\sigma|^2\geq C_2.
\]
To see this, note that at the edge this follows from Corollary \ref{c:bdryests}. On the other hand at an interior minimum of $\Delta$ we have that $\partial\Delta=2\lambda\partial\lambda-\sigma\partial\bar{\sigma}-\bar{\sigma}\partial\sigma=0$.
Substituting this in the flow for $\Delta$, as expressed in Corollary \ref{c:flowdet}, we find that
\begin{align}
\left(\frac{\partial }{\partial s}-{\mathbb G}^{jk}\partial_j\partial_k\right)\Delta&=\frac{2\Delta}{\lambda^2}\|da\|^2
    +\frac{2ai(1+\xi\bar{\xi})^2}{\lambda}[\bar{\partial}a(\partial\phi-{\textstyle{\frac{2i\bar{\xi}}{1+\xi\bar{\xi}}}})
          -\partial a(\bar{\partial}\phi+{\textstyle{\frac{2i\xi}{1+\xi\bar{\xi}}}})]\nonumber\\
&\qquad\qquad\qquad\qquad+a^2\|d\phi-2j\;d\ln(1+\xi\bar{\xi})\|^2-4\lambda,
\end{align}
where we have introduced $\sigma=ae^{i\phi}$ and $\|da\|^2={\mathbb{G}}^{jk}\partial_ja\;\partial_ka$ is the square norm of the gradient.

Now, to identify the second term on the right-hand side
\[
i[\bar{\partial}a(\partial\phi-{\textstyle{\frac{2i\bar{\xi}}{1+\xi\bar{\xi}}}})-\partial a(\bar{\partial}\phi+{\textstyle{\frac{2i\xi}{1+\xi\bar{\xi}}}})]d\xi\wedge d\bar{\xi}=(d\phi-2j\;d\ln(1+\xi\bar{\xi}))\wedge da,
\]
and taking the Hodge star operator with respect to the  metric ${\mathbb{G}}$ we get
\[
\frac{i(1+\xi\bar{\xi})^2}{2\sqrt{\Delta}}[\bar{\partial}a(\partial\phi-{\textstyle{\frac{2i\bar{\xi}}{1+\xi\bar{\xi}}}})-\partial a(\bar{\partial}\phi+{\textstyle{\frac{2i\xi}{1+\xi\bar{\xi}}}})]=\star[(d\phi-2j\;d\ln(1+\xi\bar{\xi}))\wedge da].
\]
By elementary geometry we have that for any 1-forms $\alpha,\beta$
\[
\star[\alpha\wedge \beta]\geq-\|\alpha\|.\|\beta\|,
\]
and applying this, we find
that
\begin{align}
\left(\frac{\partial }{\partial s}-{\mathbb G}^{jk}\partial_j\partial_k\right)\Delta&\geq\frac{\Delta}{\lambda^2}\|da\|^2
-\frac{2a\sqrt{\Delta}}{\lambda}\|d\phi-2j\;d\ln(1+\xi\bar{\xi})\|.\|da\|\nonumber\\
&\qquad\qquad\qquad\qquad   +a^2\|d\phi-2j\;d\ln(1+\xi\bar{\xi})\|^2-4\lambda\nonumber\\
&=\left(\frac{\sqrt{\Delta}}{\lambda}\|da\|-a\|d\phi-2j\;d\ln(1+\xi\bar{\xi})\|\right)^2-4\lambda\geq0.\nonumber
\end{align}
The interior bound on $\Delta$ follows.

Therefore
\[
\left(\frac{|\sigma|}{\lambda^2-|\sigma|^2}\right)^2=\frac{|\sigma|^2}{\lambda^2-|\sigma|^2}\frac{1}{\lambda^2-|\sigma|^2}\leq \frac{C_1}{C_2}.
\]
Thus
\[
{\mathbb G}(\overline{R}(X,e_{(a)})X,e_{(a)}))\;\geq k|X|^2,
\]
as claimed.
\end{proof}

\vspace{0.1in}

Turn now to showing that the flow stays in a compact region. We do so by showing that the graph function is bounded in the fibre directions of $TS^2$ by bounding the perpendicular distance function $\chi$ (see Definition \ref{d:perpdist}).

\begin{Prop}\label{p:compact}
Under mean curvature flow the perpendicular distance function satisfies
\[
\chi\leq C_1(\chi_0,\tilde{\chi}),
\]
where $\chi_0$ and $\tilde{\chi}$ are the initial and boundary perpendicular distances to the origin.
\end{Prop}

\begin{proof}
At the edge $f(\partial D)$ the Dirichlet condition means that $F_s=\tilde{F}$ so that
\[
\chi(\partial D)=\tilde{\chi}(\partial D)\leq C_1.
\]
Thus we need only consider the interior.

First note that
\[
\partial\chi^2=4\bar{F}\partial\left(\frac{F}{(1+\xi\bar{\xi})^2}\right)+\frac{4F}{(1+\xi\bar{\xi})^2}\partial\bar{F}
    =4\frac{\bar{F}\rho-F\sigma}{(1+\xi\bar{\xi})^2},
\]
and so the expression in Proposition \ref{p:chiflow} can be rewritten
\[
\left(\frac{\partial }{\partial s}-{\mathbb G}^{jk}\partial_j\partial_k\right)\chi^2=\frac{i(F\partial\chi^2-\bar{F}\bar{\partial}\chi^2)
   -i(\sigma\xi\bar{\partial}\chi^2-\bar{\sigma}\bar{\xi}\partial\chi^2)(1+\xi\bar{\xi})+4\lambda(\rho\bar{\rho}-\sigma\bar{\sigma})}
  {\lambda^2-\sigma\bar{\sigma}}.
\]
Thus, at an interior maximum of $\chi$, $\partial\chi=\bar{\partial}\chi=0$ and so
\[
\left(\frac{\partial }{\partial s}-{\mathbb G}^{jk}\partial_j\partial_k\right)\chi^2
=\frac{4\lambda(\rho\bar{\rho}-\sigma\bar{\sigma})}{\lambda^2-\sigma\bar{\sigma}}\leq0.
\]
By the maximum principle, the result follows.

\end{proof}

\vspace{0.1in}

Having established these two propositions, we move on to the edge estimates.

\vspace{0.1in}

\subsubsection{Boundary estimates}\label{ss:best}

In this section we establish two things: a gradient estimate and an estimate on the norm of the mean curvature vector at the edge of the flowing disc.

To start, we show that the flowing disc cannot become degenerate at the edge by becoming tangent to a fibre of $TS^2\rightarrow S^2$.

\begin{Prop}\label{p:bdryest1}
At the edge, $|dF|^2<C_1(|d\tilde{F}|^2)$.
\end{Prop}
\begin{proof}
To bound the gradient of the graph function $F(\xi,\bar{\xi})$ we need to bound the slopes $\rho=\vartheta+i\lambda$ and $\sigma$. The asymptotic holomorphicity condition at the edge says that
\[
|\sigma|=\frac{C}{1+s}\leq C,
\]
and so we have a bound on $|\sigma|$. On the other hand the intersection condition implies that
\[
\rho=\tilde{\rho}+(\bar{\sigma}-\bar{\tilde{\sigma}})e^{i\beta},
\]
for some $\beta$, and so $|\rho|\leq |\tilde{\rho}|+|\sigma|+|\tilde{\sigma}|<C$, since the gradient of the graph function $\tilde{F}(\xi,\bar{\xi})$ of the boundary plane is bounded.
\end{proof}

\vspace{0.1in}

To complete the gradient estimate at the edge we must now ensure that the flowing surface does not pick up a degenerate direction which is not tangent to the fibre of $TS^2$. That is, we must ensure that $\lambda^2-|\sigma|^2>0$ at the edge. 

To ensure this we note that, by Corollary \ref{c:nulltogether}, $\lambda^2-|\sigma|^2=0$ can only happen if the edge of the flowing disc hits a degenerate point on the boundary plane $\tilde{\mathcal P}$,  i.e. if $\tilde{\lambda}^2-|\tilde{\sigma}|^2=0$. Thus, if we can constrain the flowing edge to lie in a positive region  $\tilde{\Lambda}_{C_0}\subset \tilde{\mathcal P}$, we will have the required gradient estimate.

\begin{Prop}\label{p:posregion}
Suppose 
\begin{equation}\label{e:smallness1}
C<\min\left\{\frac{\inf|\tilde{\sigma}|}{(1+4\cosh B)^2},\frac{\inf|\tilde{\sigma}|}{\sup|\tilde{\sigma}|}\right\}.
\end{equation}
If $f_s(\partial D)\subset\tilde{\Lambda}_{C_0}$ for $s=0$, then $f_s(\partial D)\subset\tilde{\Lambda}_{C_0}$ for all $s$.
\end{Prop}
\begin{proof}
Assume for the sake of contradiction that there exists a point and time $(\gamma_0,s_0)$ such that $\tilde{\mu}(\gamma_0,s_0)=1$. Then, by 
Corollary \ref{c:nulltogether}, $\mu(\gamma_0,s_0)=1$. Substituting these into equation (\ref{e:hypang2}) we find that at that point
\begin{align}
\cosh B&=\frac{\left|\frac{\sigma}{\tilde{\sigma}}\right|^{\scriptstyle{\frac{1}{2}}}
   +\left|\frac{\tilde{\sigma}}{\sigma}\right|^{\scriptstyle{\frac{1}{2}}}-\sin 2\theta\sin 2\tilde{\theta}}
{(1+\cos 2\theta)(1+\cos 2\tilde{\theta})}\geq {\textstyle{\frac{1}{4}}}\left[\left|\frac{\sigma}{\tilde{\sigma}}\right|^{\scriptstyle{\frac{1}{2}}}
   +\left|\frac{\tilde{\sigma}}{\sigma}\right|^{\scriptstyle{\frac{1}{2}}}-1\right]\nonumber\\
&\geq {\textstyle{\frac{1}{4}}}\left[\left(\frac{|\sigma|}{\sup|\tilde{\sigma}|}\right)^{\scriptstyle{\frac{1}{2}}}
   +\left(\frac{\inf|\tilde{\sigma}|}{|\sigma|}\right)^{\scriptstyle{\frac{1}{2}}}-1\right]> {\textstyle{\frac{1}{4}}}\left[\left(\frac{\inf|\tilde{\sigma}|}{C}\right)^{\scriptstyle{\frac{1}{2}}}-1\right],\nonumber
\end{align}
where the final inequality follows from the assumption
\[
|\sigma|\leq C<\frac{\inf|\tilde{\sigma}|}{\sup|\tilde{\sigma}|}.
\]
Rearranging the inequality, we find that at $(\gamma_0,s_0)$ 
\begin{equation}\label{e:eqq1}
1+4\cosh B-\frac{\inf|\tilde{\sigma}|^{\scriptstyle{\frac{1}{2}}}}{C^{\scriptstyle{\frac{1}{2}}}}>0.
\end{equation}
But, by assumption
\[
C<\frac{\inf|\tilde{\sigma}|}{(1+4\cosh B)^2},
\]
and so
\[
1+4\cosh B-\frac{\inf|\tilde{\sigma}|^{\scriptstyle{\frac{1}{2}}}}{C^{\scriptstyle{\frac{1}{2}}}}<0,
\]
which contradicts (\ref{e:eqq1}). Thus the flow never reaches such a point $(\gamma_0,s_0)$.
\end{proof}
\vspace{0.1in}

\begin{Cor}\label{c:bdryests}
Propositions \ref{p:bdryest1} and \ref{p:posregion} imply that if the initial aholomorphicity is chosen to satisfy inequality (\ref{e:smallness1}), there exist positive constants $C_1,...,C_7$ depending only on boundary and initial quantities such that, for as long as the flow exists, we have at the edge
\[
C_1<\lambda^2-|\sigma|^2<C_2, \qquad 0\leq\mu<C_3<1,
\]
\[
C_4<\tilde{\lambda}^2-|\tilde{\sigma}|^2<C_5, \qquad C_6<\tilde{\mu}<C_7<1.
\]
\end{Cor}
\vspace{0.1in}

We finally turn to establishing a bound on the norm of the mean curvature at the edge. This is necessary because the interior gradient estimate from \cite{Gk02} depends on a bound on $|H|$ (see Proposition 7 of \cite{Gk02}), and while we have an interior estimate for $|H|$ thanks to Proposition 9 of \cite{Gk02}, we still need to show that $|H|$ does not blow-up at the edge. In fact, we will bound all of the second derivatives.

To this end, working at a point on $f_s(\partial D)\subset\tilde{\Lambda}_{C_0}$, the tangent space of the ambient manifold splits in two different ways:
\[
TTS^2=TD\oplus ND=T\tilde{\Lambda}_{C_0}\oplus N\tilde{\Lambda}_{C_0},
\]
where we drop the $f_s$ on the flowing disc for ease of notation here and in what follows. Let $\{\mathring{e}_{(a)},\mathring{f}_{(b)}\}$ 
be an adapted orthonormal frame for the first splitting with co-frame $\{\mathring{e}^{(a)},\mathring{f}^{(b)}\}$ and similarly define 
$\{\mathring{\tilde{e}}_{(a)},\mathring{\tilde{f}}_{(b)}\}$ and  
$\{\mathring{\tilde{e}}^{(a)},\mathring{\tilde{f}}^{(b)}\}$ for the second splitting. 

Chose these frames so that the first tangent vectors align with the common intersection of the tangent planes, while the second normal vectors align with the common intersection of the normal planes. From Corollary \ref{c:multi} this means that, if $B$ is the hyperbolic angle between the planes, the following relations hold:
\[
\mathring{\tilde{e}}^{(1)}=\mathring{e}^{(1)},
\qquad
\mathring{\tilde{e}}^{(2)}=\cosh B\;\mathring{e}^{(2)}+\sinh B \;\mathring{f}^{(1)},
\]
\[
\mathring{\tilde{f}}^{(1)}=\sinh B\;\mathring{e}^{(2)}+\cosh B \;\mathring{f}^{(1)},
\qquad
\mathring{\tilde{f}}^{(2)}=\mathring{f}^{(2)},
\]
and
\[
\mathring{e}_{(1)}=\mathring{\tilde{e}}_{(1)},
\qquad
\mathring{e}_{(2)}=\cosh B\;\mathring{\tilde{e}}_{(2)}+\sinh B \;\mathring{\tilde{f}}_{(1)},
\]
\[
\mathring{f}_{(1)}=\sinh B\;\mathring{\tilde{e}}_{(2)}+\cosh B \;\mathring{\tilde{f}}_{(1)},
\qquad
\mathring{f}_{(2)}=\mathring{\tilde{f}}_{(2)}.
\]

Suppose that these aligned frames are related to the canonical frames in Proposition \ref{p:frames} by angles  
$\theta,\tilde{\theta},\psi,\tilde{\psi} $: 

\[
\mathring{e}_{(1)}=\cos\theta {e}_{(1)}-\sin\theta {e}_{(2)},
\qquad\qquad
\mathring{e}_{(2)}=\sin\theta {e}_{(1)}+\cos\theta {e}_{(2)},
\] 
\[
\mathring{f}_{(1)}=\cos\psi {f}_{(1)}+\sin\psi {f}_{(2)},
\qquad\qquad
\mathring{f}_{(2)}=-\sin\psi {f}_{(1)}+\cos\psi {f}_{(2)},
\] 
\[
\mathring{\tilde{e}}_{(1)}=\cos\tilde{\theta} {\tilde{e}}_{(1)}-\sin\tilde{\theta} {\tilde{e}}_{(2)},
\qquad\qquad
\mathring{\tilde{e}}_{(2)}=\sin\tilde{\theta} {\tilde{e}}_{(1)}+\cos\tilde{\theta} {\tilde{e}}_{(2)},
\]
\[
\mathring{\tilde{f}}_{(1)}=\cos\tilde{\psi} {\tilde{f}}_{(1)}+\sin\tilde{\psi} {\tilde{f}}_{(2)},
\qquad\qquad
\mathring{\tilde{f}}_{(2)}=-\sin\tilde{\psi} {\tilde{f}}_{(1)}+\cos\tilde{\psi} {\tilde{f}}_{(2)}.
\]

We now set out to control the values of these angles under the flow. First
\begin{Prop}
If we chose $B$ initially satisfying
\begin{equation}\label{e:smallness2}
B<\tanh^{-1}\left(\frac{\inf|\tilde{\sigma}|}{C_0}\right),
\end{equation}
then $\tilde{\mu}>\tanh B$ at $f_s(\partial D)$ for all $s$.
\end{Prop}
\begin{proof}
This follows simply from
\[
\tilde{\mu}=\frac{|\tilde{\sigma}|}{|\tilde{\lambda}|}\geq\frac{\inf|\tilde{\sigma}|}{C_0}>\tanh B
\]
\end{proof}

\begin{Prop}\label{p:anglecontrol}
If, in addition to the inequalities (\ref{e:smallness1}) and (\ref{e:smallness2}), the following holds:
\begin{equation}\label{e:smallness3}
C<C_0\frac{1-R_0^2}{1+R_0^2}\min\left\{1-\cosh B \left(1-{\textstyle{\frac{\inf|\tilde{\sigma}|^2}{C_0^2}}}\right)^{\scriptstyle{\frac{1}{2}}},
\cosh B \left(1-{\textstyle{\frac{\sup|\tilde{\sigma}|^2(1+R_0^2)^2}{C_0^2(1-R_0^2)^2}}}\right)^{\scriptstyle{\frac{1}{2}}}\right\},
\end{equation}
and if $0<\theta<{\textstyle{\frac{\pi}{2}}}$ and  $0<\psi<{\textstyle{\frac{\pi}{2}}}$ initially, then these bounds on the angles hold for as long as the flow exists.
\end{Prop}
\begin{proof}
Suppose for the sake of contradiction that at some point and time $(\gamma_0,s_0)$ we have $\theta={\textstyle{\frac{\pi}{2}}}$. Note that the following argument holds even if $s_0=\infty$. The expression of the hyperbolic angle (\ref{e:hypangle}) and equation (\ref{e:intersectionconstraint}) means that at $(\gamma_0,s_0)$,
\[
\cosh B=\frac{(1+\mu)\left(1-{\textstyle{\frac{|\sigma|}{|\tilde{\lambda}|}}}\right)}
{(1-\mu^2)^{\scriptstyle{\frac{1}{2}}}(1-\tilde{\mu}^2)^{\scriptstyle{\frac{1}{2}}}},
\]
or, after rearrangement
\[
\mu=\frac{\cosh^2B(1-\tilde{\mu}^2)-\left(1-{\textstyle{\frac{|\sigma|}{|\tilde{\lambda}|}}}\right)^2}
{\left(1-{\textstyle{\frac{|\sigma|}{|\tilde{\lambda}|}}}\right)^2+\cosh^2B(1-\tilde{\mu}^2)}.
\]
But
\begin{align}
\cosh B(1-\tilde{\mu}^2)^{\scriptstyle{\frac{1}{2}}}-1+{\textstyle{\frac{|\sigma|}{|\tilde{\lambda}|}}}
&\leq\cosh B(1-\tilde{\mu}^2)^{\scriptstyle{\frac{1}{2}}}-1+{\textstyle{\frac{C}{\inf|\tilde{\lambda}|}}}\nonumber\\
&\leq\cosh B\left(1-{\textstyle{\frac{\inf|\tilde{\sigma}|^2}{C_0^2}}}\right)^{\scriptstyle{\frac{1}{2}}}-1
+{\textstyle{\frac{C(1+R_0^2)}{C_0(1-R_0^2)}}}<0,\nonumber
\end{align}
by our assumption (\ref{e:smallness3}) and so $\mu<0$. But this contradicts $\mu\geq0$ and so $\theta$ is never equal to ${\textstyle{\frac{\pi}{2}}}$.

On the other hand, suppose for the sake of contradiction that at some point and time $(\gamma_0,s_0)$ we have $\theta=0$. In this case the hyperbolic angle turns out to be
\[
\cosh B=\frac{(1-\mu)\left(1+{\textstyle{\frac{|\sigma|}{|\tilde{\lambda}|}}}\right)}{(1-\mu^2)^{\scriptstyle{\frac{1}{2}}}(1-\tilde{\mu}^2)^{\scriptstyle{\frac{1}{2}}}},
\]
which means that at $(\gamma_0,s_0)$ 
\[
\mu=\frac{\left(1-{\textstyle{\frac{|\sigma|}{|\tilde{\lambda}|}}}\right)^2-\cosh^2B(1-\tilde{\mu}^2)}{\left(1-{\textstyle{\frac{|\sigma|}{|\tilde{\lambda}|}}}\right)^2+\cosh^2B(1-\tilde{\mu}^2)}.
\]
Putting this into the left-hand inequality of (\ref{e:ineq2}) we find
\begin{align}
0&\leq \mu-\frac{\tilde{\mu}-\cosh B\sinh B\;(1-\tilde{\mu}^2)}{\tilde{\mu}^2+\cosh^2 B\;(1-\tilde{\mu}^2)}\nonumber\\
&=\frac{\left(1-{\textstyle{\frac{|\sigma|}{|\tilde{\lambda}|}}}\right)^2-\cosh^2B(1-\tilde{\mu}^2)}{\left(1-{\textstyle{\frac{|\sigma|}{|\tilde{\lambda}|}}}\right)^2+\cosh^2B(1-\tilde{\mu}^2)}-\frac{\tilde{\mu}-\cosh B\sinh B\;(1-\tilde{\mu}^2)}{\tilde{\mu}^2+\cosh^2 B\;(1-\tilde{\mu}^2)}\nonumber\\
&\leq\frac{\left[\left(1-{\textstyle{\frac{|\sigma|}{|\tilde{\lambda}|}}}\right)^2-\cosh^2B(1-\tilde{\mu}^2)\right](\tilde{\mu}^2+\cosh^2 B\;(1-\tilde{\mu}^2))}{(1+\cosh^2B(1-\tilde{\mu}^2))^2}\nonumber\\
&\qquad\qquad\qquad-\frac{(\tilde{\mu}-\cosh B\sinh B\;(1-\tilde{\mu}^2))\left[\left(1-{\textstyle{\frac{|\sigma|}{|\tilde{\lambda}|}}}\right)^2+\cosh^2B(1-\tilde{\mu}^2)\right]}{(1+\cosh^2B(1-\tilde{\mu}^2))^2}\nonumber\\
&=\frac{\left(1-{\textstyle{\frac{|\sigma|}{|\tilde{\lambda}|}}}\right)^2(\tilde{\mu}^2+\cosh^2 B\;(1-\tilde{\mu}^2)-\tilde{\mu}+\cosh B\sinh B\;(1-\tilde{\mu}^2))}{(1+\cosh^2B(1-\tilde{\mu}^2))^2}\nonumber\\
&\qquad\qquad\qquad-\frac{\cosh^2B(1-\tilde{\mu}^2)(\tilde{\mu}^2+\cosh^2 B\;(1-\tilde{\mu}^2)+\tilde{\mu}-\cosh B\sinh B\;(1-\tilde{\mu}^2))}{(1+\cosh^2B(1-\tilde{\mu}^2))^2},\label{e:ineq0}
\end{align}
where the middle inequality follows from using $1-|\sigma|/|\tilde{\lambda}|\leq1$ and $\tilde{\mu}\leq 1$.

Noting that for $0<\tilde{\mu}<1$ we have $\tilde{\mu}^2+\cosh^2 B\;(1-\tilde{\mu}^2)-\tilde{\mu}+\cosh B\sinh B\;(1-\tilde{\mu}^2)>0$ and that $C=|\sigma|(1+s)$ along the boundary, we can rearrange inequality (\ref{e:ineq0}) and conclude that at $(\gamma_0,s_0)$ at the  
\begin{align}
C&\geq|\sigma|\geq|\tilde{\lambda}|\cosh B(1-\tilde{\mu}^2)^{\scriptstyle{\frac{1}{2}}}\left[\frac{\tilde{\mu}^2+\cosh^2 B\;(1-\tilde{\mu}^2)+\tilde{\mu}-\cosh B\sinh B\;(1-\tilde{\mu}^2)}{\tilde{\mu}^2+\cosh^2 B\;(1-\tilde{\mu}^2)-\tilde{\mu}+\cosh B\sinh B\;(1-\tilde{\mu}^2)}\right]^{\scriptstyle{\frac{1}{2}}}\nonumber\\
&\geq|\tilde{\lambda}|\cosh B(1-\tilde{\mu}^2)^{\scriptstyle{\frac{1}{2}}}\nonumber\\
&\geq C_0\frac{1-R_0^2}{1+R_0^2}\cosh B \left(1-{\textstyle{\frac{\sup|\tilde{\sigma}|^2(1+R_0)^2}{C_0^2(1-R_0)^2}}}\right)^{\scriptstyle{\frac{1}{2}}}.\nonumber
\end{align}
But this contradicts assumption (\ref{e:smallness3}). Thus $\theta$ is never 0.

These exact same arguments go through upon replacing $\theta$ by $\psi$, since all of the relevant equations also hold for $\psi$ (see the proof of Proposition \ref{p:hypang1}). Thus $\psi\neq 0,{\textstyle{\frac{\pi}{2}}}$.
\end{proof}

\vspace{0.1in}

Having established control of the angles between the intersection and the canonical frames we now set about bounding the 2-jet of the flowing surface.  As the boundary is moving we return to the original version of the flow in the {\bf I.B.V.P.} without the perpendicular projection on the time derivative. Note also that, since the mean curvature vector is perpendicular to the flowing surface, it is perpendicular to the intersection of the flowing disc and boundary plane and we have in the adapted frames that
\begin{equation}\label{e:compat}
H^{(a)}\mathring{f}_{(a)}=H^{(1)}\mathring{f}_{(1)}=H\mathring {f}_{(1)},
\end{equation}
where, for brevity, we have dropped the frame index so that $H$ is now just a function.

\begin{Prop}
The time and space derivatives of the intersection and constant angle conditions yield the following relationships between the flowing and boundary planes along the edge:
\begin{equation}\label{e:dbc1}
\nabla_{(1)}H=H(\sinh B\;\tilde{\Gamma}_{(112)}+\cosh B\;\tilde{A}^{(\hat{1})}_{(11)}),
\qquad
\nabla_{(2)}H=H\tilde{A}_{(12)}^{(\hat{1})},
\end{equation}
\begin{equation}\label{e:dbc2}
C_{(\hat{1}\hat{2})}^{(1)}=-\tilde{A}^{(\hat{2})}_{(11)},
\qquad
C_{(\hat{1}\hat{2})}^{(2)}=\sinh B\; \tilde{C}_{(\hat{1}\hat{2})}^{(1)}-\cosh B\;\tilde{A}^{(\hat{2})}_{(12)},
\end{equation}
\begin{equation}\label{e:dbc3}
A^{(\hat{1})}_{(12)}=\tilde{A}^{(\hat{1})}_{(12)},
\qquad
A^{(\hat{2})}_{(11)}=\tilde{A}^{(\hat{2})}_{(11)},
\end{equation}
\begin{equation}\label{e:dbc4}
\Gamma_{(112)}=-\cosh B\;\tilde{\Gamma}_{(112)}+\sinh B\; \tilde{A}^{(\hat{1})}_{(11)},
\end{equation}
\begin{equation}\label{e:dbc5}
A^{(\hat{1})}_{(11)}=\sinh B\;\tilde{\Gamma}_{(112)}+\cosh B\; \tilde{A}^{(\hat{1})}_{(11)},
\end{equation}
\begin{equation}\label{e:dbc6}
A^{(\hat{2})}_{(12)}=\cosh B\;\tilde{A}^{(\hat{2})}_{(12)}+\sinh B\; \tilde{C}^{\;\;(\hat{2})}_{(1\hat{1})},
\end{equation}
\begin{equation}\label{e:dbc7}
C^{\;\;(\hat{2})}_{(1\hat{1})}=-\sinh B\;\tilde{A}^{(\hat{2})}_{(12)}-\cosh B\; \tilde{C}^{\;\;(\hat{2})}_{(1\hat{1})}.
\end{equation}
Here and throughout, $A$, $\Gamma$ and $C$ are the second fundamental form, tangent and normal connections of the flowing disc, in components of frames adapted to the intersection, so for example, $A_{(ab)}^{(\hat{c})}=<A(\mathring{e}_{(a)},\mathring{e}_{(b)}),\mathring{f}^{(c)}>$, where a hat on a frame
index means it is in the normal direction. Similar notation holds for tilded quantities.

\end{Prop}
\begin{proof}
In order to differentiate the constant angle condition along the edge, we need to compute the evolution of the adapted frames. Starting with the frame adapted to the boundary plane we compute in  normal coordinates:
\begin{align}
\frac{\partial}{\partial s}\mathring{\tilde{e}}_{(b)}&=H^{(d)}\partial_{(d)}\mathring{\tilde{e}}_{(b)}
  =H^{(d)}\overline{\nabla}_{(d)}\mathring{\tilde{e}}_{(b)}
= H^{(d)}[\tilde{\nabla}_{(d)}\mathring{\tilde{e}}_{(b)}-\tilde{A}^{(\hat{c})}_{(db)}\mathring{\tilde{f}}_{(c)}]\nonumber\\
&=H[\tilde{\nabla}_{(1)}\mathring{\tilde{e}}_{(b)}-\tilde{A}^{(\hat{c})}_{(1b)}\mathring{\tilde{f}}_{(c)}]
=H\tilde{\Gamma}_{(1b)}^{\;\;\;(d)}\mathring{\tilde{e}}_{(d)}
-H\tilde{A}_{(1b)}^{(\hat{d})}\mathring{\tilde{f}}_{(d)},\nonumber
\end{align}
where we have used the splitting of the ambient connection
\begin{equation}\label{e:connsplit1}
\overline{\nabla}_{\mathring{\tilde{e}}_{(a)}}\mathring{\tilde{e}}_{(b)}=\nabla_{\mathring{\tilde{e}}_{(a)}}\mathring{\tilde{e}}_{(b)}-A_{(ab)}^{(c)}\mathring{\tilde{f}}_{(c)}
\end{equation}
\begin{equation}\label{e:connsplit2}
\overline{\nabla}_{\mathring{\tilde{e}}_{(a)}}\mathring{\tilde{f}}_{(b)}=A^{(c)}_{(ab)}\mathring{\tilde{e}}_{(c)}+C_{(ab)}^{(c)}\mathring{\tilde{f}}_{(c)},
\end{equation}
where $C_{(ab)}^{(c)}$ are the components of the normal connection
\[
\nabla^\bot_{\mathring{\tilde{e}}_{(a)}}\mathring{\tilde{f}}_{(b)}=C_{(ab)}^{(c)}\mathring{\tilde{f}}_{(c)}.
\]
Similarly we find that
\[
\frac{\partial}{\partial s}\mathring{\tilde{f}}_{(b)}=H\tilde{A}_{(1\hat{b})}^{\;\;\;(d)}\mathring{\tilde{e}}_{(d)}
+H\tilde{C}_{(1\;\;\hat{b})}^{\;\;(\hat{d})}\mathring{\tilde{f}}_{(d)},
\]
\[
\frac{\partial}{\partial s}\mathring{\tilde{e}}^{(b)}=H\tilde{\Gamma}_{(1\;\;d)}^{\;\;(b)}\mathring{\tilde{e}}^{(d)}
-H\tilde{A}_{(1\hat{d})}^{(b)}\mathring{\tilde{f}}^{(d)},
\]
\[
\frac{\partial}{\partial s}\mathring{\tilde{f}}^{(b)}=H\tilde{A}_{(1d)}^{(\hat{b})}\mathring{\tilde{e}}^{(d)}
+H\tilde{C}_{(1\hat{d})}^{\;\;(\hat{b})}\mathring{\tilde{f}}^{(d)}.
\]
We now compute the evolution of the (co)frame of the flowing surface. To this end let
\[
\frac{\partial}{\partial s}\mathring{e}^{(a)}=M_{(b)}^{(a)}\mathring{e}^{(b)}+N_{(b)}^{(a)}\mathring{f}^{(b)},
\]
\[
\frac{\partial}{\partial s}\mathring{f}_{(a)}=K_{(a)}^{(b)}\mathring{e}_{(b)}+L_{(a)}^{(b)}\mathring{f}_{(b)},
\]
where $K,L,M$ and $N$ are $2\times 2$ matrices to be determined.

Differentiating $<\mathring{e}^{(a)},\mathring{f}_{(b)}>=0$ with respect to time implies that $N_{(b)}^{(a)}=-K_{(b)}^{(a)}$ and a standard calculation (see e.g. Lemma 2.2 of \cite{chenli}) shows that
\[
K_{(b)}^{(a)}=\mathring{e}^{(a)j}\nabla_jH_{(b)}+H^{(c)}C^{(a)}_{(\hat{c}\hat{b})},
\]
and so we have 
\[
\frac{\partial}{\partial s}\mathring{e}^{(a)}=M_{(b)}^{(a)}\mathring{e}^{(b)}
-\left[\nabla^{(a)}H_{(b)}+HC^{(a)}_{(\hat{1}\hat{b})}\right]\mathring{f}^{(b)},
\]
\[
\frac{\partial}{\partial s}\mathring{f}_{(a)}=\left[\nabla^{(b)}H_{(a)}+HC^{(b)}_{(\hat{1}\hat{a})}\right]\mathring{e}_{(b)}
+L_{(a)}^{(b)}\mathring{f}_{(b)},
\]
with $L$ and $M$ to be determined. This we do by the time derivative of the intersection and constant angle conditions in adapted frames:
\[
0=\frac{\partial}{\partial s}<\mathring{e}^{(a)},\mathring{\tilde{e}}_{(b)}>=\frac{\partial}{\partial s}<\mathring{e}^{(a)},\mathring{\tilde{f}}_{(b)}>
=\frac{\partial}{\partial s}<\mathring{f}_{(a)},\mathring{\tilde{e}}^{(b)}>=\frac{\partial}{\partial s}<\mathring{f}_{(a)},\mathring{\tilde{f}}^{(b)}>.
\] 
For example, consider
\begin{align}
0&=\frac{\partial}{\partial s}<\mathring{f}_{(1)},\mathring{\tilde{e}}^{(1)}>=<\frac{\partial}{\partial s}\mathring{f}_{(1)},\mathring{\tilde{e}}^{(1)}>
+<\mathring{f}_{(1)},\frac{\partial}{\partial s}\mathring{\tilde{e}}^{(1)}>\nonumber\\
&=<(\nabla^{(b)}H_{(1)}+HC^{(b)}_{(\hat{1}\hat{1})})\mathring{e}_{(b)}+L_{(1)}^{(b)}\mathring{f}_{(b)},\mathring{\tilde{e}}^{(1)}>
+H<\mathring{f}_{(1)},\tilde{\Gamma}_{(1\;\;d)}^{\;\;(1)}\mathring{\tilde{e}}^{(d)}
-\tilde{A}_{(1\hat{d})}^{(1)}\mathring{\tilde{f}}^{(d)}>\nonumber\\
&=-\nabla^{(1)}H+H\left[\sinh B\; \tilde{\Gamma}_{(1\;\;2)}^{\;\;(1)}-\cosh B\;\tilde{A}_{(1\hat{1})}^{(1)}\right]\nonumber\\
&=-\nabla_{(1)}H+H\left[\sinh B\;\tilde{\Gamma}_{(112)}+\cosh B\;\tilde{A}_{(11)}^{(\hat{1})}\right],\nonumber
\end{align}
which is the first of equations (\ref{e:dbc1}).
Similarly we find that
\[
0=L_{(1)}^{(1)}=L_{(2)}^{(2)}=M_{(1)}^{(1)}=M_{(2)}^{(2)},
\]
\[
L_{(1)}^{(2)}=-L_{(2)}^{(1)}=-H(\sinh B\tilde{A}^{(\hat{2})}_{(12)}-\cosh B\;\tilde{C}_{(1\hat{1}\hat{2})}),
\]
\[
M_{(1)}^{(2)}=-M_{(2)}^{(1)}=-H(\cosh B\;\tilde{\Gamma}_{(112)}+\sinh B\tilde{A}^{(\hat{1})}_{(11)}),
\]
and the rest of equations (\ref{e:dbc1}) and (\ref{e:dbc2}).

Equations (\ref{e:dbc3}) to (\ref{e:dbc6}) follow from the space derivatives along the intersection and constant angle conditions. For example taking the covariant derivative of this w.r.t. the ambient connection we find
\[
<\overline{\nabla}_{\mathring{e}_{(1)}}\mathring{e}_{(a)},\mathring{\tilde{e}}^{(b)}>
+<\mathring{e}_{(a)},\overline{\nabla}_{\mathring{e}_{(1)}}\mathring{\tilde{e}}^{(b)}>=0.
\]
From decomposing the ambient connection as in equation (\ref{e:connsplit1}), for $a=2$ and $b=1$ we get
\[
<\overline{\nabla}_{\mathring{e}_{(1)}}\mathring{e}_{(2)},\mathring{\tilde{e}}^{(1)}>=<\nabla_{\mathring{e}_{(1)}}\mathring{e}_{(2)}
-A_{(12)}^{(\hat{c})}\mathring{f}_{(c)},\mathring{e}^{(1)}>=\Gamma_{(12)}^{\;\; (1)},
\]
and equation (\ref{e:connsplit1}) along with the relationship between the two sets of frames also gives
\begin{align}
<\mathring{e}_{(2)},\nabla_{\mathring{e}_{(1)}}\mathring{\tilde{e}}^{(1)}>&=<\cosh B\;\mathring{\tilde{e}}_{(2)}
+\sinh B \;\mathring{\tilde{f}}_{(1)},\tilde{\nabla}_{\mathring{\tilde{e}}_{(1)}}\mathring{\tilde{e}}^{(1)}
    -\tilde{A}_{(11)}^{(\hat{c})}\mathring{\tilde{f}}_{(c)}>\nonumber\\
&=\cosh B\; \tilde{\Gamma}_{\;\;(11)}^{\;\; (2)}-\sinh B \;\tilde{A}_{(11)}^{(\hat{1})}.\nonumber
\end{align}
Putting these together we find
\[
\Gamma_{(12)}^{\;\; (1)}+\cosh B\; \tilde{\Gamma}_{(11)}^{\;\; (2)}-\sinh B \;\tilde{A}_{(11)}^{(\hat{1})}=0,
\]
which can be rearranged to give equation (\ref{e:dbc4}). The other equations follow similarly by covariantly differentiating the inner product of other frame vectors.
\end{proof}
\vspace{0.1in}

We now bound the second derivatives  of the flowing surface by those of the boundary plane along the edge.

\begin{Thm}\label{t:bdryest3}
Suppose that the initial conditions $B$ and $C$ are chosen to satisfy inequalities (\ref{e:smallness1}), (\ref{e:smallness2}) and (\ref{e:smallness3})
and $0<\theta<\pi/2$, $0<\psi<\pi/2$. 
Then for as long as the flow exists $|d^2F_s|<C_1(|d^2\tilde{F}|)$ at the edge.
\end{Thm}
\begin{proof}
The idea of the proof is to use equations (\ref{e:dbc2}) to (\ref{e:dbc7}), together with the spatial derivative of the asymptotic holomorphicity condition, to bound the derivatives of the 1-jet of $F$, as expressed by the quantities $\lambda$, $\mu$, $\phi$ and $\vartheta$. Note that by the identity (\ref{e:id1}) the derivatives of $\vartheta={\mathbb R}{\mbox{e}}(\rho)$ are determined by those of $\lambda$, $\mu=|\sigma|/|\lambda|$ and $\phi={\mbox {arg}}(\sigma)$ - an artifact of partial derivatives commuting.

The 2-jet of the flowing surface will ultimately be bounded by the 2-jet of the boundary plane. As we proceed we gather these edge 2-jet terms together on the right-hand side and denote them by a generic function $\tilde{\mathcal F}$ which is bounded for a $C^2$ boundary plane. That is, we write  $\tilde{\mathcal F}$ (possibly with a subscript) for quantities that only depend on the slopes $\tilde{\lambda}$, $\tilde{\mu}$, $\tilde{\phi}$ and $\tilde{\vartheta}$ and their first derivatives.

A complicating factor in this scheme is that derivatives of the angles $\theta,\tilde{\theta},\psi,\tilde{\psi}$ also enter into some computations and have to be controlled. 

The estimates will hold for any point $\gamma\in f_s(\partial D)$ and for computational simplicity we translate and rotate so that the point is $\xi=\eta=0$. The derivatives at such a point naturally split into normal and tangential components relative to the edge of the flowing disc and for any function $\Phi:D\rightarrow{\mathbb R}$ we denote the tangential and normal derivatives by
\[
\Phi_{|1}=\mathring{e}_{(1)}^j\frac{\partial \Phi}{\partial x^j},
\qquad \qquad
\Phi_{|2}=\mathring{e}_{(2)}^j\frac{\partial \Phi}{\partial x^j},
\]
respectively, where $\{\mathring{e}_{(1)},\mathring{e}_{(2)}\}$ are the adapted frames.

The expressions which link equations (\ref{e:dbc2}) to (\ref{e:dbc7}) to the derivatives of the 1-jet are the projections of the submanifold equations (\ref{e:connsplit1}) and (\ref{e:connsplit2}):
\begin{equation}\label{e:defsff}
A^{(\hat{c})}_{(ab)}=\mathring{f}^{(c)}_{j}\;^\perp P_{k}^j\mathring{e}_{(a)}^{\;l}\overline{\nabla}_l\mathring{e}_{(b)}^{\;k},
\end{equation}
\begin{equation}\label{e:defconns}
C^{\;\;(\hat{2})}_{(a)\;\;(\hat{1})}=\mathring{e}_{(a)}^{j}\;^\parallel P_j^k\mathring{f}^{(2)}_{l}\overline{\nabla}_k\mathring{f}_{(1)}^{\;l},
\qquad\qquad
\Gamma^{\;\;(2)}_{(a)\;\;(1)}=\mathring{e}_{(a)}^{j}\;^\parallel P_j^k\mathring{e}^{(2)}_{l}\overline{\nabla}_k\mathring{e}_{(1)}^{\;l},
\end{equation}
the expression for the aligned frames
\[
\mathring{e}_{(1)}=\cos\theta {e}_{(1)}-\sin\theta {e}_{(2)},
\qquad\qquad
\mathring{e}_{(2)}=\sin\theta {e}_{(1)}+\cos\theta {e}_{(2)},
\] 
\[
\mathring{f}_{(1)}=\cos\psi {f}_{(1)}+\sin\psi {f}_{(2)},
\qquad\qquad
\mathring{f}_{(2)}=-\sin\psi {f}_{(1)}+\cos\psi {f}_{(2)},
\] 
and the coordinate expressions in Proposition \ref{p:frames} for the canonical frames. Here the projection operators are exactly as in the proof of Proposition \ref{p:2ndff}. Similar expressions also hold for the boundary plane with tilded quantities.

We want to bound $\lambda_{|1}$, $\lambda_{|2}$, $\mu_{|1}$, $\mu_{|2}$, $\phi_{|1}$ and $\phi_{|2}$, as well as the gauge terms $\theta_{|1}$, $\tilde{\theta}_{|1}$, $\psi_{|1}$ and $\tilde{\psi}_{|1}$.

This last term can be bounded by noting that, by the first of equations (\ref{e:dbc2}) and (\ref{e:dbc7}) we have
\[
-\tilde{A}^{(\hat{2})}_{(11)}=C_{(\hat{1}\hat{2})}^{(1)}=C^{\;\;(\hat{2})}_{(1\hat{1})}
=-\sinh B\;\tilde{A}^{(\hat{2})}_{(12)}-\cosh B\; \tilde{C}^{\;\;(\hat{2})}_{(1\hat{1})}.
\]
Now since $\tilde{A}^{(\hat{2})}_{(11)}$ consists of 2-jet terms of the boundary and by the first expression in
(\ref{e:defconns}) we find that
\[
\tilde{C}^{\;\;(\hat{2})}_{(1\hat{1})}=\tilde{\psi}_{|1}+\tilde{\mathcal F}_1=\cosh B \tilde{A}^{(\hat{2})}_{(11)}=\tilde{\mathcal F}_2.
\]
Thus we have 
\[
\tilde{\psi}_{|1}=\tilde{\mathcal F}_3,
\]
and our first bound.

The tangential derivative of the asymptotic holomorphic condition (iv) of the {\bf I.B.V.P.} can be written, as per definition \ref{d:spinco}, 
\begin{equation}\label{e:dlam01}
\lambda_{|1}={\textstyle{\frac{\lambda}{\mu}}}\mu_{|1}.
\end{equation}
Moreover, we have from equations (\ref{e:dbc6}) and (\ref{e:dbc3}) that
\begin{align}
\sin{\scriptstyle{(\theta-\psi)}}A^{(\hat{1})}_{(12)} -\cos{\scriptstyle{(\theta-\psi)}}A^{(\hat{2})}_{(12)}
&=\sin{\scriptstyle{(\theta-\psi)}}\tilde{A}^{(\hat{1})}_{(12)}  -\cos{\scriptstyle{(\theta-\psi)}}
[\cosh B\;\tilde{A}^{(\hat{2})}_{(12)}+\sinh B\; \tilde{C}^{\;\;(\hat{2})}_{(1\hat{1})}]\nonumber\\
&=\tilde{\mathcal{F}}.\nonumber
\end{align}
Now using the explicit expression in Proposition \ref{p:2ndff} for the second fundamental form in terms of the 2-jet of the graph function we find that
\begin{equation}\label{e:dlam02}
\lambda_{|2}=\frac{\lambda}{1-\mu^2}\left[(1+\mu-2\sin^2\theta)\mu_{|2}
-2\cos\theta\sin\theta\mu(1-\mu^2)^{\scriptstyle{\frac{1}{2}}}\phi_{|2}\right]+\tilde{\mathcal F}.
\end{equation}
Equations (\ref{e:dlam01}) and (\ref{e:dlam02}) bound the derivatives of $\lambda$ by the other derivatives and so we can reduce our list of derivatives, leaving us with $\mu_{|1}$, $\phi_{|1}$, $\theta_{|1}$, $\tilde{\theta}_{|1}$, $\psi_{|1}$, $\mu_{|2}$ and  $\phi_{|2}$. These split naturally into tangent and normal derivatives which we now exploit.

The five remaining equations of (\ref{e:dbc2}) to (\ref{e:dbc7}) give a linear system for the five tangential derivatives which is of the form 
\[
\left[\begin{matrix}
*&* &0 &0 &0 \\
*&* &0 &0 &* \\
*&* &0 &0 &* \\
*&* &0 &1 &0 \\
*&* &-1 &0 &* 
\end{matrix}
\right]
\left[\begin{matrix}
{\textstyle{\frac{1}{2\mu(1-\mu^2)}}}\mu_{|1}\\
{\textstyle{\frac{\mu}{2\sqrt{1-\mu^2}}}}\phi_{|1}\\
\theta_{|1}\\
\psi_{|1}\\
\sinh B\tilde{\theta}_{|1}
\end{matrix}
\right]=\tilde{\mathcal F}.
\]
Note that $|d\sigma|^2=d|\sigma|^2+|\sigma|^2|d\phi|^2$ and so to bound the gradients of $\sigma$ we need only bound $d|\sigma|$ and $|\sigma|d\phi=|\lambda|\mu d\phi$.

The last two equations imply that bounds on $\mu_{|1}$, $\phi_{|1}$ and $\tilde{\theta}_{|1}$ yield bounds on $\theta_{|1}$ and $\psi_{|1}$. Thus we are left with a $3\times 3$ system for $\mu_{|1}$, $\phi_{|1}$ and $\tilde{\theta}_{|1}$ with matrix of coefficients
\[
\left[\begin{matrix}
1-\cos 2\theta\mu&\sin 2\theta &0 \\
 & & \\
\sin 2\psi +(\sin 2\theta \cos 2\psi+\sin 2(\theta+\psi))\mu&\cos 2\theta\cos 2\psi+\cos 2(\theta+\psi)&\cos(\theta+\psi) \\
 & & \\
\cos 2\psi-(\sin 2\theta\sin 2\psi -\cos 2(\theta+\psi))\mu&- \cos 2\theta\sin 2\psi -\sin 2(\theta+\psi) &-\sin(\theta+\psi)
\end{matrix}
\right].
\]
The determinant of this matrix is 
\[
\cos\psi\sin\theta(1+\mu)+\cos\theta\sin\psi(1-\mu)>0,
\]
where the inequality follows from (\ref{e:smallness1}), (\ref{e:smallness2}) and (\ref{e:smallness3}). We can therefore invert the system to yield bounds on the tangential derivatives: $\mu_{|1}$, $\phi_{|1}$ and $\tilde{\theta}_{|1}$. The final tangential derivative we have to bound is $\vartheta_{|1}$ and this follows readily from differentiating equation (\ref{e:hypangcon3}) along the edge.

We turn now to bounding the normal derivatives $\mu_{|2}$ and $\phi_{|2}$. These are controlled by the tangential derivatives due to the generalized Coddazzi-Mainardi equation (\ref{e:id1}). To see this, introduce
\[
\alpha_\theta=\cos\theta\alpha_1-\sin\theta\alpha_2
={\textstyle{\frac{1}{\sqrt{2}}}}\left(\frac{\cos\theta}{(|\lambda|-|\sigma|)^{\scriptstyle{\frac{1}{2}}}}
+\frac{i\sin\theta}{(|\lambda|+|\sigma|)^{\scriptstyle{\frac{1}{2}}}}\right)e^{-{\scriptstyle{\frac{i}{2}}}\phi+{\scriptstyle{\frac{i}{4}}}\pi},
\]
so that the tangential and normal derivative operators are
\[
\mathring{e}_{(1)}=\alpha_\theta\frac{\partial}{\partial \xi}+\overline{\alpha}_\theta\frac{\partial}{\partial \bar{\xi}},
\qquad\qquad
\mathring{e}_{(2)}=\alpha_{\theta-{\scriptstyle{\frac{\pi}{2}}}}\frac{\partial}{\partial \xi}
+\overline{\alpha}_{\theta-{\scriptstyle{\frac{\pi}{2}}}}\frac{\partial}{\partial \bar{\xi}},
\]
which can be inverted to
\[
\frac{\partial}{\partial \xi}=
-i(\lambda^2-|\sigma|^2)^{\scriptstyle{\frac{1}{2}}}\left[\overline{\alpha}_{\theta-{\scriptstyle{\frac{\pi}{2}}}}\mathring{e}_{(1)}
-\overline{\alpha}_\theta\mathring{e}_{(2)}\right].
\]

Equation (\ref{e:id1}) says that
\[
\partial\bar{\sigma}-\bar{\partial}\vartheta-i\bar{\partial}\lambda=\tilde{\mathcal F},
\]
which relates certain tangential and normal derivatives. Thus
\[
\vartheta_{1}=\alpha_\theta\partial\vartheta+\overline{\alpha}_\theta\bar{\partial}\vartheta
=\alpha_\theta(i\partial\lambda+\bar{\partial}\sigma)+\overline{\alpha}_\theta\bar{\partial}
(-i\bar{\partial}\lambda+\partial\bar{\sigma})+\tilde{\mathcal F}.
\]
Now a direct computation reduces this to
\[
-(1-\cos\theta)(\mu+\cos 2\theta)\mu_{|2}+\sin 2\theta(1-\cos\theta)\mu(1-\mu^2)^{\scriptstyle{\frac{1}{2}}}\phi_{|2}=\tilde{\mathcal F}.
\]
On the other hand equation (\ref{e:compat}) implies that
\[
\cos(\theta+\psi)\mu_{|2}-\sin(\theta+\psi)\mu(1-\mu^2)^{\scriptstyle{\frac{1}{2}}}\phi_{|2}=\tilde{\mathcal F}.
\]
The last two equations form a linear system for $\mu_{|2}$ and $\phi_{|2}$ whose coefficients have determinant
\[
-(1-\cos\theta)\mu(1-\mu^2)^{\scriptstyle{\frac{1}{2}}}[\cos\psi\cos\theta(1+\mu)+\sin\psi\sin\theta(1-\mu)].
\]
Inequalities (\ref{e:smallness1}), (\ref{e:smallness2}) and (\ref{e:smallness3}) ensure that this remains strictly negative during the flow and hence we can invert the system and get bounds for $\mu_{|2}$ and $\phi_{|2}$. This completes the proof.
\end{proof}

\vspace{0.1in}

\subsection{{Existence of a holomorphic disc}}\label{ss:hol_disc}

In this section we prove that we can attach a holomorphic disc to the set of oriented normals of any non-umbilic convex hemisphere in ${\mathbb R}^3$, considered as a surface in $TS^2$.

We first prove a final estimate.

\begin{Prop}\label{p:shearest}
Under the mean curvature flow we have the following estimate:
\[
\left(\frac{\partial }{\partial s}-{\mathbb G}^{jk}\partial_j\partial_k\right)\left(
\frac{|\sigma|^2}{\lambda^2-|\sigma|^2}\right)\leq
    \frac{4\lambda}{|\sigma|^2}\frac{|\sigma|^4}{(\lambda^2-|\sigma|^2)^2},
\]
\end{Prop}
\begin{proof}
Our starting point is equation (\ref{e:holconv}), which we rewrite in the form
\[
\left(\frac{\partial }{\partial s}-{\mathbb G}^{jk}\partial_j\partial_k\right)\left(
\frac{|\sigma|^2}{\lambda^2-|\sigma|^2}\right)=I_1+I_2+I_3+I_4,
\]
where
\[
I_1=-2\frac{(\lambda^2+|\sigma|^2)}{(\lambda^2-|\sigma|^2)^3}
    \Big\|\lambda d|\sigma|-|\sigma|d\lambda -\frac{\lambda^2-|\sigma|^2}{\lambda^2+|\sigma|^2}\frac{\lambda|\sigma|}{1+\xi\bar{\xi}}d(1+\xi\bar{\xi})\Big\|^2,
\]
\[
I_2=2\frac{(1+\xi\bar{\xi})\lambda^2|\sigma|}{(\lambda^2-|\sigma|^2)^3}<d(1+\xi\bar{\xi}),\lambda d|\sigma|-|\sigma|d\lambda>,
\]
\[
I_3=-2\frac{\lambda^2|\sigma|^2}{(\lambda^2-|\sigma|^2)^3}\Big\|d\phi-2(1+\xi\bar{\xi})^{-1}j[d(1+\xi\bar{\xi})]\Big\|^2,
\]
\[
I_4=\frac{|\sigma|^2}{2(\lambda^2-|\sigma|^2)^2(\lambda^2+|\sigma|^2)}\Big\{-i|\sigma|\lambda^2
    (\xi^2e^{i\phi}-\bar{\xi}^2e^{-i\phi})+4\lambda\{[2-\xi\bar{\xi}]\lambda^2+2|\sigma|^2\}\Big\}.
\]
Here $\sigma=|\sigma|e^{i\phi}$ and we have introduced the complex structure $j(d\xi)=id\xi$.

Introduce the flat metric
\[
<d\xi,d\bar{\xi}>=1, \qquad \qquad <d\xi,d\xi>=<d\bar{\xi},d\bar{\xi}>=0,
\]
on ${\mathcal P}$ via its coordinate $\xi$. Denote the flat norm and inner product by $|.|$ and $<\cdot,\cdot>$, and the norm and inner product of $g$ by $\|.\|$ and $<<\cdot,\cdot>>$. The following estimates will prove useful:

\begin{Lem}\label{l:metrics}
\[
\frac{(1+\xi\bar{\xi})^2(-\lambda-|\sigma|)}{\lambda^2-|\sigma|^2}|X|^2\leq\|X\|^2\leq\frac{(1+\xi\bar{\xi})^2(-\lambda+|\sigma|)}{\lambda^2-|\sigma|^2}|X|^2.
\]
\end{Lem}

\vspace{0.1in}

First we estimate $I_1$ using the flat metric and Lemma \ref{l:metrics}:
\[
I_1\leq-2\frac{(1+\xi\bar{\xi})^2(\lambda^2+|\sigma|^2)}{(\lambda^2-|\sigma|^2)^3(-\lambda+|\sigma|)}
    \Big|\lambda d|\sigma|-|\sigma|d\lambda -\frac{\lambda^2-|\sigma|^2}{\lambda^2+|\sigma|^2}\frac{\lambda|\sigma|}{1+\xi\bar{\xi}}
   d(1+\xi\bar{\xi})\Big|^2,
\]
and so after completing the squares
\begin{align}
I_1+I_2&\leq-2{\textstyle{\frac{(1+\xi\bar{\xi})^{2}(\lambda^2+|\sigma|^2)}{(\lambda^2-|\sigma|^2)^3(-\lambda+|\sigma|)}}}\Bigg\{
    \Big|\lambda d|\sigma|-|\sigma|d\lambda +{\textstyle{\frac{(-\lambda+|\sigma|)(-\lambda+2|\sigma|)\lambda|\sigma|}{2(\lambda^2+|\sigma|^2)(1+\xi\bar{\xi})}}}d(1+\xi\bar{\xi})\Big|^2\nonumber\\
&\qquad\qquad\qquad\qquad-{\textstyle{\frac{\lambda(-\lambda+|\sigma|)^2(-3\lambda-4|\sigma|)}{4(\lambda^2+|\sigma|^2)^2}\frac{\lambda^2|\sigma|^2}{(1+\xi\bar{\xi})^2}}}
    \Big|d(1+\xi\bar{\xi})\Big|^2\Bigg\}\nonumber.
\end{align}

Clearly $I_3$ is negative, so we discard it. To estimate $I_4$ we use
\[
-2\xi\bar{\xi}\leq i(\xi^2e^{i\phi}-\bar{\xi}^2e^{-i\phi})\leq 2\xi\bar{\xi}.
\]
Thus
\[
I_4\leq{\textstyle{\frac{|\sigma|^2}{2(\lambda^2-|\sigma|^2)^2(\lambda^2+|\sigma|^2)}}}
    \Big\{2|\sigma|\lambda^2\xi\bar{\xi}+2\lambda\{[4-2\xi\bar{\xi}]\lambda^2+4|\sigma|^2\}\Big\}.
\]
Combining the estimates
\[
\left(\frac{\partial }{\partial s}-{\mathbb G}^{jk}\partial_j\partial_k\right)\left(\frac{|\sigma|^2}{\lambda^2-|\sigma|^2}\right)\leq\frac{4\lambda|\sigma|^2}{(\lambda^2-|\sigma|^2)^2}
    +\frac{\lambda^2|\sigma|^2(\lambda^3+2\lambda^2|\sigma|-2\lambda|\sigma|^2-|\sigma|^3)\xi\bar{\xi}}{(\lambda^2-|\sigma|^2)^3(\lambda^2+|\sigma|^2)}.
\]
In fact, we can achieve $|\lambda|\geq3|\sigma|$ throughout the flow (see Corollary \ref{c:bdryests} with $C_3<1/3$), so that
\begin{align}
\left(\frac{\partial }{\partial s}-{\mathbb G}^{jk}\partial_j\partial_k\right)\left(\frac{|\sigma|^2}{\lambda^2-|\sigma|^2}\right)&\leq\frac{4\lambda|\sigma|^2}{(\lambda^2-|\sigma|^2)^2}
    +\frac{\lambda^5|\sigma|^2\xi\bar{\xi}}{9(\lambda^2-|\sigma|^2)^3(\lambda^2+|\sigma|^2)}\nonumber\\
& \leq\frac{4\lambda}{|\sigma|^2}\frac{|\sigma|^4}{(\lambda^2-|\sigma|^2)^2}\nonumber.
\end{align}
This completes the proof of the Proposition.
\end{proof}

\vspace{0.1in}

We now show that our flow is asymptotically holomorphic:

\begin{Prop}\label{p:asymhol}
The mean curvature flow satisfies:
\[
|\sigma|^2\rightarrow 0 \qquad\qquad{\mbox as} \qquad s\rightarrow\infty.
\]
\end{Prop}
\begin{proof}
At the edge this follows from the second Neumann condition. To consider the interior, recall the following estimate:
\[
\left(\frac{\partial }{\partial s}-{\mathbb G}^{jk}\partial_j\partial_k\right)\left(
\frac{|\sigma|^2}{\lambda^2-|\sigma|^2}\right)\leq
    \frac{4\lambda}{|\sigma|^2}\frac{|\sigma|^4}{(\lambda^2-|\sigma|^2)^2}.
\]
This is of the form
\[
\left(\frac{\partial }{\partial s}-{\mathbb G}^{jk}\partial_j\partial_k\right)f\leq-C_1^2f^2,
\]
for the positive function $f$ given by 
\[
f=\frac{|\sigma|^2}{\lambda^2-|\sigma|^2}, 
\]
and $C_1$ is a constant such that
\[
\frac{4|\lambda|}{|\sigma|^2}\geq C_1^2,
\]
which exists by Corollary \ref{c:bdryests}. Following Ecker and Huisken \cite{EaH}, choose $g=sf$ and compute
\[
\left(\frac{\partial }{\partial s}-{\mathbb G}^{jk}\partial_j\partial_k\right)g\leq gs^{-1}(1-C_1^2g).
\]
Now suppose that the maximum of $g$ occurs in the interior of the disc. Then, by the maximum principle we must have $1-C_1^2g\geq0$ or, equivalently, $f\leq C_1^{-2}s^{-1}$. Returning to our notation, this means that
\[
\frac{|\sigma|^2}{\lambda^2-|\sigma|^2} \leq \frac{1}{C_1^2s},
\]
or 
\[
|\sigma|^2\leq\frac{\lambda^2-|\sigma|^2}{C_1^2s} \leq \frac{C_2}{s}.
\]
That is, the flow is asymptotically holomorphic.
\end{proof}

\vspace{0.1in}

Finally, drawing the results together, the existence of the holomorphic disc is established as follows.

\vspace{0.1in}

\begin{Thm}\label{t:ashola}
Let $S$ be a $C^{3+\alpha}$ smooth open convex plane in ${\mathbb R}^3$ without umbilic points and suppose that the Gauss image of $S$ contains a closed hemisphere. Let ${\mathcal P}\subset TS^2$ be the oriented normals of $S$. 

Then $\exists f:D\rightarrow TS^2$ with $f\in C^{1+\alpha}_{loc}(D)\cap C^0(\overline{D})$ satisfying
\begin{enumerate}
\item[(i)] $f$ is holomorphic,
\item[(ii)] $f(\partial D)\subset {\mathcal P}$.
\end{enumerate}
\end{Thm} 
\begin{proof}
By a rotation in ${\mathbb R}^3$ and the induced action on $TS^2$  we can take the north pole of ${\mathcal P}$ to $\xi=0$. Now deform ${\mathcal P}$ to $\tilde{\mathcal P}$ by adding a holomorphic twist. Thus if ${\mathcal P}$ is given by the graph function $\eta=F(\xi,\bar{\xi})$, then $\tilde{\mathcal P}$ is given by the graph function $\eta=\tilde{F}(\xi,\bar{\xi})=F(\xi,\bar{\xi})-iC_0\xi$. We choose $C_0>0$ large enough so that $\tilde{\mathcal P}$ is positive at the pole.

We can now apply Theorem \ref{t:lte} to find a long-time solution $f\in C^{2+\alpha}_{loc}(D\times{\mathbb R}_{\geq0})\cap C^1(\overline{D}\times{\mathbb R}_{\geq0})$ to mean curvature flow with edge in $\tilde{\Lambda}$, so long as the initial conditions $B$ and $C$ are chosen small enough. Moreover, in Proposition \ref{p:asymhol} we showed that this solution is asymptotically holomorphic in time:
\[
|\sigma|^2\rightarrow 0 \qquad\qquad{\mbox as} \qquad s\rightarrow\infty.
\]
Now by parabolic Schauder estimates, (see e.g. \cite{LaS} section 6, and \cite{JJ} page 79), we have 
\[
||f(\cdot,t)||_{C_{loc}^{1+\alpha}(D)} \le 
C(||H||_{L^\infty(D \times [0,\infty))} + ||\overline{K}(\cdot,t)||_{L^\infty(D)}).
\]
Here, $\overline{K}$ involves ambient metric, Christoffel symbols, and the gradient $f$ (all of which are bounded since the evolution takes place in a relatively compact subset of $TS^2$), and on the mean curvature vector $H$, which is bounded for all time. The right hand side being bounded in time, and using the gradient bound from uniform positivity, we can by Arzela-Ascoli extract a subsequence $t_j \to \infty$ limit disc $\tilde{f}_\infty(D)$, where $\tilde{f}_\infty \in C_{loc}^{1+\alpha'}(D)\cap C^0(\overline{D})$, for $\alpha' < \alpha$. 

From asymptotic holomorphicity, proved in the next section, it now follows that $\tilde{f}_\infty(D)$ is holomorphic with respect to ${\mathbb J}$. Note that we do {\it not} have smooth convergence up to the boundary, and that (in general), the angle condition (iii) in I.B.V.P. is not retained by the holomorphic limit $\tilde{f}_\infty(D)$.

Finally, the holomorphic disc $f_\infty(D)$ with edge lying on ${\mathcal P}$ can now be obtained by subtracting the holomorphic twist.

\end{proof}

\vspace{0.1in}
\section{Proof of Theorem 1}\label{s:proof}

The proof of Theorem 1 now goes as follows. Suppose that a plane $P_0\subset{\mathbb R}^3$ exists that satisfies Properties I, II and III, and let ${\mathcal P}_0\subset{\mathbb L}$ be the set of normals to $P_0$. Then by Proposition \ref{p:banach}, ${\mathcal P}_0$ lies in an open subset of  $C^{2+\alpha}$-smooth Lagrangian sections. This open set is a Banach manifold modeled on the Lagrangian vector fields which arise as ${\mathbb J}$ times the tangent vectors of ${\mathcal P}_0$.

Moreover, Proposition \ref{p:banach2} establishes that, passing to a smaller neighbourhood, the space of holomorphic discs with edge lying on points of the neighbourhood is a submanifold of the space of all smooth
maps.

Since the Cauchy-Riemann operator with this boundary condition is Fredholm, applying the Sard-Smale theorem in Proposition \ref{p:fredholm} proves that for a dense open subset the Cauchy-Riemann operator with this boundary condition is surjective. We conclude that the dimension of the space of parameterized holomorphic discs is equal to the analytic index of the operator. By Proposition \ref{p:oh} this index is related to the Keller-Maslov index of the edge curve by $I = \mu + 2$.

Consider a holomorphic disc with edge lying on ${\mathcal P}_0$. By Proposition 2.1.3 of \cite{Gk01}, the Keller-Maslov index of the edge is the relative first Chern class of the edge which, for graphs over the same domain, counts the number of complex points inside the edge on ${\mathcal P}_0$.

Suppose that the disc in ${\mathcal P}_0$ bounded by the edge of the holomorphic disc is totally real, so that $\mu = 0$. Thus $I = \mu + 2 = 2$ and, quotienting by the M\"obius group of the disc, the space of unparameterised holomorphic discs is $I-3 = 2-3 = -1$. This means that, for $C^{2+\alpha}$ boundary conditions, there cannot exist a holomorphic disc with edge on ${\mathcal P}_0$.

We conclude that, should ${\mathcal P}_0$ exist, then so would a nearby plane over which it is {\em not} possible to attach a holomorphic disc. Finally by Theorem \ref{t:mcf}, we have seen that it is {\em always} possible to attach a holomorphic disc to any such plane. We therefore conclude that there does not exist a proper $C^{3+\alpha}$ plane $P_0\subset{\mathbb R}^3$ satisfying Properties I, II and III, thereby proving Theorem 1.

\vspace{0.1in}

\end{document}